\newtheorem{thm}{Theorem}[section]
\newtheorem{cor}[thm]{Corollary}
\newtheorem{lem}[thm]{Lemma}
\newtheorem{prop}[thm]{Proposition}
\theoremstyle{definition}
\newtheorem{defn}[thm]{Definition}
\theoremstyle{remark}
\newtheorem{rem}[thm]{Remark}
\newtheorem{example}[thm]{Example}
\numberwithin{equation}{section}
\newcommand{\R}{\mathbb R}
\newcommand{\Z}{\mathbb Z}
\newcommand{\To}{\longrightarrow}
\begin{document}

\title[Singularities of discrete improper indefinite affine spheres]{Singularities of discrete improper indefinite affine spheres}%
\author{Anderson Reis de Vargas and Marcos Craizer}%
\email{}%

\thanks{Author's e-mail address: anderson.vargas.1@cp2.edu.br and craizer@puc-rio.br\\
Anderson R. Vargas is a teacher in Colégio Pedro II. Marcos Craizer is a professor in Pontifical Catholic University of Rio de Janeiro.}%
\subjclass{53A70, 53A15}%
\keywords{Discrete Differential Geometry, Affine Geometry, Asymptotic Nets, Discrete Singularities}%

\begin{abstract}
In this paper we consider  discrete improper affine spheres based on asymptotic nets. In this context, we distinguish the discrete edges and vertices that must be considered singular. The singular edges can be considered as discrete cuspidal edges, while some of the singular vertices can be considered as discrete swallowtails. The classification of singularities of discrete nets is quite a difficult task, and our results can be seen as a first step in this direction. We also prove some characterizations of ruled discrete improper affine spheres which are analogous to the smooth case.
\end{abstract}
\maketitle

\section{Introduction}

In this paper we consider asymptotic nets, which are natural nets for the discretization
of surfaces parameterized by asymptotic coordinates. Asymptotic nets have been the object of
recent as well as ancient research by many geometers, as one can see in the list of references of
this paper (\cite{Bobenko2008}, \cite{Craizer2008}, \cite{Craizer2010}, \cite{Craizer2011}, \cite{Rorig2014}, \cite{Kaferbock2013}, \cite{Matsuura2003}, \cite{Milan2014})

There are many classes of affine surfaces with indefinite metric that have been defined as subclasses of asymptotic nets:
Bobenko and Schief have defined discrete affine spheres \cite{Bobenko1999}, Matsuura and Urakawa have considered discrete improper
affine spheres \cite{Matsuura2003}, and, generalizing this latter concept, Craizer, Anciaux and Lewiner have defined discrete affine minimal surfaces \cite{Craizer2008}. In this paper we shall consider the singularities of the discrete improper indefinite affine spheres (DIIAS). We shall also describe some results concerning the ruled case.

Smooth improper affine spheres with indefinite metric (IIAS) can be obtained from a pair of planar curves by the so called centre-chord construction. The generic singularities of an IIAS are cuspidal edges and swallowtails (\cite{Ishikawa2006}), and the projection of the singularities in the plane of the curves define a new planar curve called midpoint tangent locus (MPTL) (\cite{Giblin2008}), which consists of the midpoints of the chords connecting points of both curves with parallel tangents. Moreover, the projection of the cuspidal edges of the IIAS are smooth points of the MPTL, while the projection of the swallowtails of the IIAS are cusps of the MPTL (\cite{Craizer2011}).

In this paper we consider the discrete analog of the centre-chord construction (\cite{Kobayashi2020}) and define the corresponding discrete midpoint tangent locus (DMPTL). The DMPTL is a polygonal line and some special vertices can be regarded as discrete cusps. Then we define singular edges and vertices of the DIIAS as those who project in the MPTL. All singular edges of the DIIAS will be considered as discrete cuspidal edges, while those vertices which project into cusps of the MPTL will be regarded as discrete swallowtails. With this approach, we can give a simple and easily verifiable definition for discrete cuspidal edges and discrete swallowtails in an arbitrary asymptotic net. The consistency of these definitions is remarkable, since it is quite difficult to have good definitions of singularities in the discrete nets.
For a discussion of this question, see \cite{Rossman2018}. For other classes of discrete surfaces with singularities, see \cite{Hoffmann2012} and \cite{Yasumoto2015}.

In the smooth case, a ruled improper affine sphere (IIAS) with indefinite Blaschke metric is affinely congruent to the graph of $z(x,y)= xy + \phi(x)$, for some smooth function $\phi$ (\cite{Nomizu1994}). Among these ruled surfaces, we can distinguish the Cayley surface, defined by $\phi(x)=-\frac{x^3}{3}$. It is characterized by the conditions $C\neq 0$ and $\nabla C=0$, where $C$ is the cubic form and $\nabla $ the induced connection. Finally, the generic singularities of ruled IIAS can be shown as just cuspidal edges. In this paper we have obtained analogous results for the discrete improper affine spheres.

The paper is organized as follows: In Section 2, we give all definitions and results concerning smooth IIAS that are relevant to the discrete setting. In Section 3, we discuss DIIAS, emphasizing the centre-chord construction. Section 4 is the core of the paper, where we define discrete cuspidal edges and discrete swallowtails. In Section 5 we discuss ruled DIIAS.

\section{Preliminaries on smooth affine theory}

\subsection{Affine differential structure}

Consider a parameterized smooth surface $q:U\subset\R^2\To\R^3$, where $U$ is an open subset of the plane and, for $(u,v)\in U$, let
$$
L(u,v)=[\,q_u,q_v,q_{uu}], \ M(u,v)=[\,q_u,q_v,q_{uv}], \ N(u,v)=[\,q_u,q_v,q_{vv}],
$$
where $f_u (f_v)$ denotes the partial derivative of a function $f$ with respect to $u (v)$, and $[\cdot,\cdot,\cdot]$ denotes the determinant.

The surface $q$ is said to be non-degenerate if $LN-M^2\neq0$, and in this case, the Berwald-Blaschke metric is defined by
$ds^2=\frac{1}{|LN-M^2|^{1/4}}\left(L\,du^2+2M\,du\,dv+N\,dv^2\right).$ If $LN-M^2>0$, the metric is called \emph{definite} and the surface is locally convex. On the contrary, if $LN-M^2<0$, the metric is called \emph{indefinite} and the surface is locally hyperbolic, \emph{i.e.}, the tangent plane crosses the surface.

From now on we shall assume that the affine surface has indefinite metric. We may assume that $(u,v)$ are \emph{asymptotic} coordinates, i.e.,  $L=N=0$. In this case, it is possible to take $M>0$, and the affine Blaschke metric takes the form $ds^2=2\Omega\,du\,dv$, where $\Omega^2=M$. Without loss of generality, we shall take $\Omega>0$.

In asymptotic coordinates the structural equations are given by
\begin{equation}\label{Gaussequation}
  \begin{array}{l}
    q_{uu}= \dfrac{\Omega_u}{\Omega}\,q_u+\dfrac{A}{\Omega}\,q_v \phantom{\dfrac{\frac{1}{1}}{\frac{1}{1}}}\\
    q_{vv}= \dfrac{B}{\Omega}\,q_u+\dfrac{\Omega_v}{\Omega}\,q_v \phantom{\dfrac{\frac{1}{1}}{\frac{1}{1}}}\\
    q_{uv}= \Omega\,\xi
  \end{array}
\end{equation}
where $\Omega=[q_u,q_v,\xi]$, $A=[q_u,q_{uu},\xi]$ and $B=[q_v,q_{vv},\xi]$ are the coefficients of the affine cubic form $A\,du^3+B\,dv^3$, and $\xi$ is the \emph{affine normal} vector field (\cite{Buchin1983}). The surface $q$ is called an {\it improper affine sphere}
(IIAS) if $\xi$ is constant.

From now on we shall be interested only in IIAS. In this case, the compatibility equations
are given by
\begin{equation}\label{Compatibility-eq-smooth}
  \Omega_{uv}\Omega-\Omega_u\Omega_v=AB,\,\,A_v=0\,\,\textrm{and}\,\,B_u=0.
\end{equation}

\subsection{The centre-chord construction}

Consider two smooth planar curves $\alpha:I\To\R^2$ and $\beta:J\To\R^2$, where $I,J\subset\R$. Denote
$$
x(u,v)=\frac{1}{2}(\alpha(u)+\beta(v)), \ \ y(u,v)=\frac{1}{2}(\beta(v)-\alpha(u)),
$$
and define $z(u,v)$ by the relations $z_u=[x_u,y]$ and $z_v=[x_v,y]$. The following proposition is proved in  \cite{Craizer2011}.

\begin{prop}
The map $q:I\times J\To\R^3$ given by $q(s,t)\To(x(s,t),z(s,t))$ is an IIAS, and conversely, any IIAS can be obtained from a pair of smooth planar curves $(\alpha,\beta)$ by the above construction. Moreover,
\begin{equation}\label{OmegaCentreChord}
\Omega(u,v)=\tfrac{1}{4}\left[ \tfrac{d\alpha}{du}, \tfrac{d\beta}{dv} \right], \ A=\tfrac{1}{4}[\tfrac{d\alpha}{du},\tfrac{d^2\alpha}{du^2}],\ B=-\tfrac{1}{4}[\tfrac{d\beta}{dv},\tfrac{d^2\beta}{dv^2}].
\end{equation}
\end{prop}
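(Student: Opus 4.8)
The plan is to prove the two implications separately; the forward direction reduces, after a single cofactor expansion, to one planar determinant identity, and that same identity reappears in the converse. \emph{Well-definedness of $z$.} First I would check that the $1$-form $z_u\,du+z_v\,dv$ is closed, i.e. $\partial_v z_u=\partial_u z_v$. Since $x=\tfrac12(\alpha(u)+\beta(v))$ has $x_{uv}=0$, this reduces to $[x_u,y_v]=[x_v,y_u]$, and substituting $x_u=\tfrac12\alpha'$, $x_v=\tfrac12\beta'$, $y_u=-\tfrac12\alpha'$, $y_v=\tfrac12\beta'$ (primes denoting the single-variable derivatives) shows both sides equal $\tfrac14[\alpha',\beta']$. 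I would also record $z_{uu}=[x_{uu},y]$ and $z_{vv}=[x_{vv},y]$, which hold because $[x_u,y_u]=[x_v,y_v]=0$, together with $z_{uv}=[x_u,y_v]=\tfrac14[\alpha',\beta']$.

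\emph{Asymptotic coordinates and the metric.} The heart of the forward direction is the computation of $L=[q_u,q_v,q_{uu}]$, $M=[q_u,q_v,q_{uv}]$ and $N=[q_u,q_v,q_{vv}]$. Writing each $q$-vector as a planar part plus a height and expanding the $3\times3$ determinant along the height row gives $L=z_u[x_v,x_{uu}]-z_v[x_u,x_{uu}]+z_{uu}[x_u,x_v]$. Substituting the relations of Step~1 turns this into exactly the left side of the planar Grassmann relation $[b,c][a,y]-[a,c][b,y]+[a,b][c,y]=0$ (valid for all $a,b,c,y\in\R^2$, since $[b,c]a-[a,c]b+[a,b]c=0$) with $a=x_u$, $b=x_v$, $c=x_{uu}$; hence $L=0$, and the same step with $c=x_{vv}$ gives $N=0$. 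Because $x_{uv}=0$, the expansion of $M$ collapses to $M=z_{uv}[x_u,x_v]=\tfrac1{16}[\alpha',\beta']^2$. Thus $(u,v)$ are asymptotic, $LN-M^2=-M^2<0$ so the metric is indefinite (non-degeneracy requiring $[\alpha',\beta']\neq0$), and $\Omega=\sqrt M=\tfrac14[\alpha',\beta']$, as claimed.

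\emph{Constant affine normal and the cubic form.} I would then verify that the constant vector $\xi=(0,0,1)$ is the affine normal: the same cofactor expansion gives $[q_u,q_v,\xi]=[x_u,x_v]=\Omega$ and $q_{uv}=(0,0,z_{uv})=\Omega\,\xi$, so $\xi$ satisfies the structural equations \eqref{Gaussequation} and, being constant, makes $q$ an IIAS. Extracting $A$ and $B$ as the connection coefficients of \eqref{Gaussequation} by Cramer's rule against $\xi$ yields $A=[q_u,q_{uu},\xi]=[x_u,x_{uu}]=\tfrac14[\alpha',\alpha'']$ and $B=[q_{vv},q_v,\xi]=-[x_v,x_{vv}]=-\tfrac14[\beta',\beta'']$, the sign in $B$ coming from the position of $q_u$ in the $q_{vv}$ equation.

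\emph{The converse.} Starting from an arbitrary IIAS, after a unimodular affine change I may assume its constant affine normal is $(0,0,1)$ and write $q=(x,z)$ with $x\in\R^2$. Since $q_{uv}=\Omega\,\xi$ is vertical, $x_{uv}=0$, and integration gives $x=\tfrac12(\alpha(u)+\beta(v))$ for planar curves $\alpha,\beta$, determined up to the shift $\alpha\mapsto\alpha+c$, $\beta\mapsto\beta-c$ with $c$ constant. It then remains to identify the height $z$ with the one produced by the centre-chord construction. Comparing $q$ with $\tilde q=(x,\tilde z)$, where $\tilde z_u=[x_u,y]$ and $\tilde z_v=[x_v,y]$, one notes that $\Omega,A,B$ depend only on $x$ and hence agree, and that $z-\tilde z=p(u)+r(v)$ because both mixed partials equal $\Omega$; the structural equations for $q$ and $\tilde q$ then constrain $p,r$, and one must check that the admissible pairs are exactly those of the form $p'=-[\alpha',c]$, $r'=-[\beta',c]$ absorbed by the shift freedom. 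I expect this last matching to be the main obstacle: showing that the height of a general IIAS is \emph{the} constructed height, and not merely that of some IIAS with the same $x$-projection, is where the compatibility equations \eqref{Compatibility-eq-smooth} and the Grassmann identity have to be combined carefully, whereas the forward direction rests essentially on the one identity of Step~2.
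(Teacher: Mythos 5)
Your forward direction is complete and correct: the closedness check for $z$, the cofactor expansion reducing $L=N=0$ to the planar identity $[b,c][a,y]-[a,c][b,y]+[a,b][c,y]=0$, the collapse $M=z_{uv}[x_u,x_v]=\tfrac{1}{16}[\alpha',\beta']^2$, the identification of $(0,0,1)$ as the affine normal, and the extraction of $A$ and $B$ --- where you rightly read $B$ off from the position of $q_u$ in the $q_{vv}$ structural equation, which is what produces the minus sign in the stated formula (the paper's parenthetical definition $B=[q_v,q_{vv},\xi]$ is off by exactly this sign, so your care here matters). The genuine gap is in the converse, and you flag it yourself: after writing $z-\tilde z=p(u)+r(v)$, the assertion that the structural equations force $p'=-[\alpha',c]$, $r'=-[\beta',c]$ for a single constant vector $c$ is deferred as ``the main obstacle''. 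That assertion \emph{is} the converse; without it you have only shown that $q$ and the constructed surface share the same $x$-projection and the same mixed second derivative of the height, which does not identify them. (For comparison: the paper offers no proof of this proposition at all, citing \cite{Craizer2011}; its nearest internal relative is the discrete converse, Proposition \ref{DIIIAS-centre-chord}, whose final ``discrete integration'' step glosses over the same point.)

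The gap can be closed cleanly, avoiding any analysis of your mixed $(p,r)$ functional equations, by a pointwise Cramer argument. Since $[x_u,x_v]=\Omega\neq 0$, for each $(u,v)$ the two scalar equations $z_u=[x_u,\eta]$, $z_v=[x_v,\eta]$ determine a unique $\eta(u,v)\in\R^2$. Differentiating the first in $v$ and the second in $u$, using $x_{uv}=0$ and $z_{uv}=\Omega=[x_u,x_v]$, gives $[x_u,\eta_v-x_v]=0$ and $[x_v,\eta_u+x_u]=0$, i.e. $\eta_v=x_v+\lambda x_u$ and $\eta_u=-x_u+\mu x_v$ for some functions $\lambda,\mu$. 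Now differentiate $z_u=[x_u,\eta]$ in $u$ to get $z_{uu}=[x_{uu},\eta]+\mu\Omega$; substituting this together with $z_u=[x_u,\eta]$, $z_v=[x_v,\eta]$ into $L=z_u[x_v,x_{uu}]-z_v[x_u,x_{uu}]+z_{uu}[x_u,x_v]$ and applying your Grassmann identity (with $y=\eta$) leaves $L=\mu\Omega^2$, so $L=0$ forces $\mu=0$; symmetrically, $N=0$ forces $\lambda=0$. Hence $\eta_u=-x_u$ and $\eta_v=x_v$, so $\eta=\tfrac12(\beta-\alpha)+c_0$ for a constant $c_0$, and replacing $(\alpha,\beta)$ by $(\alpha-c_0,\beta+c_0)$ --- which leaves $x$ unchanged --- turns $\eta$ into exactly $y=\tfrac12(\beta-\alpha)$. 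Then $z_u=[x_u,y]$ and $z_v=[x_v,y]$ hold on the nose, i.e. $q$ is obtained by the construction; in particular the admissible pairs $(p,r)$ are precisely the shift-induced ones, confirming what you conjectured but did not prove.
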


\subsection{Singularities of the IIAS and the MPTL}\label{sec:CuspsMPTL}

Even if $\Omega$ changes its sign, we shall call the surface obtained by the centre-chord construction an IIAS. In this context, the IIAS may present singularities.

The singular set $S$ of $q$ consists of all pairs $(u,v)$ for which $\Omega(u,v)=0$. From Equation \eqref{OmegaCentreChord}, it follows that $(u,v)\in S$ if and only if $\tfrac{d\alpha}{du}$ and $\tfrac{d\beta}{dv}$ are parallel. The set $x(S)$ is called the \emph{midpoint parallel tangent locus} (MPTL) of the pair $(\alpha,\beta)$ (\cite{Giblin2008}).
Generically, the MPTL is a smooth regular curve with some cusps (\cite{Craizer2011}). Moreover, a point $(u_0,v_0)\in S$ is a cusp if and only if, in a neighborhood of $(u_0,v_0)$, the MPTL is contained in a half-plane determined by the line supporting the chord $\alpha(u_0)\beta(v_0)$. The following proposition holds for a generic IIAS (\cite{Craizer2011}):

\begin{prop}\label{prop-germes}$\phantom{.}$ Let $(u_0,v_0)\in S$.
\begin{enumerate}[(i)]
  \item  The singular point $(u_0,v_0)$ is a smooth point of the MPTL if and only if it is a cuspidal edge of the IIAS.
  \item The singular point $(u_0,v_0)$ is an ordinary cusp of the MPTL if and only if it is a swallowtail of the IIAS.
\end{enumerate}
\end{prop}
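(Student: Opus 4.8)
The plan is to realise $q=(x,z)$ as a wave front and then apply the standard criteria of Kokubu--Rossman--Saji--Umehara--Yamada (KRSUY) for cuspidal edges and swallowtails, translating each criterion into a statement about the MPTL $=x(S)$. First I would compute the Euclidean conormal of $q$. A direct computation of $q_u\times q_v$ using $z_u=[x_u,y]$ and $z_v=[x_v,y]$ gives $q_u\times q_v=\Omega\,(-y_2,y_1,1)$, where $y=(y_1,y_2)$. Hence the conormal direction $\nu=(-y_2,y_1,1)$ never vanishes and extends smoothly across the singular set $S=\{\Omega=0\}$; checking that the Legendrian lift $(q,\nu)$ is an immersion (which reduces along $S$ to $\eta(y)\neq 0$ for the kernel direction $\eta$) shows that $q$ is a front wherever $\alpha',\beta'\neq 0$. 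Moreover $(q_u\times q_v)\cdot\nu=\Omega\,(1+|y|^2)$, so the signed area density $\lambda$ is a positive multiple of $\Omega$, and along $S$ the vanishing of $\lambda$, of $d\lambda$, and of $\eta\lambda$ coincides with that of $\Omega$, $d\Omega$, and $\eta\Omega$ respectively.

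Next I would pin down the null direction. Writing $\eta=a\,\partial_u+b\,\partial_v$ for a generator of $\ker dq$ along $S$, the identity $dz(\eta)=[\,dx(\eta),y\,]$ shows $\ker dq=\ker dx$, so $\eta$ is determined by $a\,x_u+b\,x_v=0$. On $S$ the vectors $x_u,x_v$ are parallel, say $x_u=\rho\,w$ and $x_v=\sigma\,w$ with $w\neq 0$ generically, so one may take $\eta=-\sigma\,\partial_u+\rho\,\partial_v$. The genericity hypothesis supplies $d\Omega\neq 0$ on $S$, whence $S$ is a regular curve with tangent $T=\Omega_v\,\partial_u-\Omega_u\,\partial_v$, and the MPTL is parametrised by $\gamma(t)=x(S)$.

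The crux is the computation of $\gamma'$. Since $dx(\eta)=0$ and $T=-J\nabla\Omega$ with $J$ the rotation by $\pi/2$, one obtains the structural identities $\det(T,\eta)=\eta\Omega$ and $dx(T)=\Omega_v\,x_u-\Omega_u\,x_v=(\eta\Omega)\,w$, so that $\gamma'=(\eta\Omega)\,w$. This single identity drives both parts. For (i): $\gamma(t_0)$ is a regular point of the MPTL exactly when $\gamma'\neq 0$, i.e. $\eta\Omega\neq 0$, which is precisely the KRSUY cuspidal-edge criterion $\eta\lambda\neq 0$. For (ii): $\gamma'$ vanishes iff $\eta\Omega=0$, i.e. $\det(T,\eta)=0$; differentiating along $S$ gives $\gamma''=(\eta\Omega)'\,w$ at the singular point, so $(\eta\Omega)'\neq 0$ matches the swallowtail nondegeneracy $\tfrac{d}{dt}\det(\gamma_S',\eta)\neq 0$, while $\gamma'''$ picks up the term $2(\eta\Omega)'\,w'$. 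A short computation then gives $\det(\gamma'',\gamma''')=2\big((\eta\Omega)'\big)^2\det(w,w')$ at the singular point, so $\gamma$ has an ordinary ($3/2$) cusp exactly when $w$ and $w'$ are independent, which holds generically. This yields the equivalence swallowtail $\iff$ ordinary cusp of the MPTL.

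The main obstacle is the second-order bookkeeping in (ii): one must check carefully that the swallowtail nondegeneracy of KRSUY ($\eta\lambda=0$ together with $\tfrac{d}{dt}\det(\gamma_S',\eta)\neq 0$ along the singular curve) is equivalent, under the generic hypotheses, to $\gamma$ having an ordinary rather than a degenerate cusp, which is what forces the transversality $w\not\parallel w'$. Establishing the front property cleanly across $S$ and isolating exactly which conditions may be assumed \emph{generic} (so that $d\Omega\neq 0$, $w\neq 0$, and $w\not\parallel w'$ all hold) is where the delicate work lies; by contrast, the first-order identity $\gamma'=(\eta\Omega)\,w$ makes part (i) essentially immediate.
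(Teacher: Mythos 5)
You should first note that this paper does not prove Proposition \ref{prop-germes} at all: it is quoted from the reference \cite{Craizer2011} (``The following proposition holds for a generic IIAS''), so there is no internal proof to compare against. That said, your argument is correct, and it is the natural front-theoretic route, consistent with how this result is established in the cited literature. The computations at its core all check out: $q_u\times q_v=\Omega\,(-y_2,y_1,1)$, hence the area density $\lambda$ is a positive smooth multiple of $\Omega$ and $q$ is a front where $\alpha'\neq 0\neq\beta'$ (your condition $\eta(y)\neq 0$ equals $2\rho\sigma w\neq 0$ for the kernel field $\eta=-\sigma\partial_u+\rho\partial_v$); $\ker dq=\ker dx$; $\det(T,\eta)=\eta\Omega$; and $dx(T)=(\eta\Omega)\,w$, so the Kokubu--Rossman--Saji--Umehara--Yamada transversality/tangency criteria translate directly into $\gamma'\neq 0$ for part (i), and for part (ii) into $\gamma'=0$ together with $\det(\gamma'',\gamma''')=2\bigl((\eta\Omega)'\bigr)^2\det(w,w')\neq 0$.

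The one step you defer to genericity can actually be closed, which tightens part (ii). At a singular point write $\beta'=c\,\alpha'$ with $c\neq 0$; then $\Omega_u=-cA$ and $\Omega_v=-B/c$, where $A=\tfrac14[\alpha',\alpha'']$ and $B=-\tfrac14[\beta',\beta'']$ are the cubic form coefficients, and taking $w=\alpha'(u(t))$ along $S$ gives $\det(w,w')=[\alpha',\alpha'']\,u'=4A\,\Omega_v=-4AB/c$. Tangency of $\eta\propto(-c,1)$ with $T=(\Omega_v,-\Omega_u)$ is exactly the condition $B=c^3A$, so at any tangency point $\det(w,w')=-4c^2A^2$; if this vanished, then $A=B=0$, hence $d\Omega=(-cA,-B/c)=0$, contradicting nondegeneracy. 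Thus $w\not\parallel w'$ is automatic at every nondegenerate tangency point, not an extra generic hypothesis, and the only genericity your proof truly needs is what the statement already presupposes: $d\Omega\neq 0$ along $S$, $\alpha'\neq 0\neq\beta'$, and $(\eta\Omega)'\neq 0$ at tangency points (which excludes singularities worse than swallowtails). With that observation, your ``delicate second-order bookkeeping'' reduces to the single identity $\det(w,w')=-4AB/c$, and the proof is complete as written.
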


\subsection{Ruled improper indefinite affine spheres}

The surface $q$ is a ruled surface if either $u$-curves or $v$-curves are all straight lines.
From the structural equations \eqref{Gaussequation}, it is clear that an IIAS is ruled if and only if $A=0$ or $B=0$.

Assume that $B=0$. Then with a change of variable of the form $V=V(v)$ we may also assume that $q_{vv}=0$, which implies that $\Omega$ is in fact a function of $u$, independent of $v$.  Now a change of variable $U=\Omega(u)$ implies that we can in fact assume that $\Omega$ is a constant, say $1$. For a ruled IIAS, we shall call such a parameterization {\it normalized}. In a normalized parameterization, one can easily verify that the cubic form is given by $C=-2Adu^3$ and so $\nabla C=0$ if and only if $A_u=0$, where $\nabla$ denotes the induced connection.

The following result is proved in \cite{Nomizu1994}:

\begin{thm}\label{teo-ruled-improper-affine-sphere}
If $q$ is a smooth ruled IIAS, then it is locally of the form $z=x^1x^2+\varphi(x^1)$ where $\varphi$ is an arbitrary function of $x^1$.
\end{thm}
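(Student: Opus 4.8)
The plan is to use the normalized parameterization described just above the statement, integrate the structural equations explicitly, and then apply an equiaffine change of coordinates in $\R^3$ that exhibits the surface as a graph. First I would assume, as in that discussion, that $B=0$ and that the parameterization is normalized, so that $\Omega\equiv 1$. Since $q$ is an IIAS, the affine normal $\xi$ is a constant vector, so after an equiaffine motion of $\R^3$ I may take $\xi=e_3=(0,0,1)$. With these normalizations the structural equations \eqref{Gaussequation} reduce to
\[
q_{vv}=0,\qquad q_{uv}=e_3,\qquad q_{uu}=A(u)\,q_v,
\]
where $A$ depends only on $u$ by \eqref{Compatibility-eq-smooth}.

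Next I would integrate directly. From $q_{vv}=0$ we get $q(u,v)=P(u)+v\,Q(u)$ for vector-valued functions $P,Q$; then $q_{uv}=Q'(u)=e_3$ forces $Q(u)=u\,e_3+Q_0$ for a constant vector $Q_0$, so that
\[
q(u,v)=P(u)+uv\,e_3+v\,Q_0 .
\]
Writing $q=(x^1,x^2,z)$ and using that $e_3$ has vanishing first two components, the normalization $\Omega=[q_u,q_v,\xi]=1$ becomes $Q_0^2\,(P^1)'(u)-Q_0^1\,(P^2)'(u)=1$; in particular $(Q_0^1,Q_0^2)\neq(0,0)$.

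Then I would perform the coordinate change. The key observation is that the planar combination $Q_0^2 x^1-Q_0^1 x^2=Q_0^2 P^1(u)-Q_0^1 P^2(u)$ depends only on $u$, and by the $\Omega=1$ relation its $u$-derivative is $1$; after translating $u$ it equals $u$ itself. Choosing a linear map of the $(x^1,x^2)$-plane that sends $Q_0$ to $(0,1)$ and this combination to the first coordinate, I obtain new planar coordinates $X=u$ and $Y=h(u)+v$ for some function $h$. Substituting $v=Y-h(X)$ into the third coordinate gives $z=XY+Q_0^3\,Y+\psi(X)$ for a suitable function $\psi$, and a final shear $Z=z-Q_0^3 Y$ in the transversal direction yields $Z=XY+\psi(X)$, which is the asserted form with $\varphi=\psi$.

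The step I expect to require the most care is verifying that the entire change of coordinates is genuinely equiaffine and invertible: the linear part of the planar map sending $Q_0\mapsto(0,1)$ has determinant equal to $Q_0^1 S_{21}+Q_0^2 S_{22}=1$ by its very defining relation, and the vertical shear has determinant $1$, so the composite transformation is equiaffine. I would also note that the equation $q_{uu}=A(u)\,q_v$ is not needed to reach the graph form; it only records the relation between $A$ and $\varphi$ (one checks $\varphi'''$ is, up to constants, $A$), which matches the earlier remark that $\nabla C=0$ corresponds to $A_u=0$.
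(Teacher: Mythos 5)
Your argument is correct, but there is nothing in the paper to compare it against directly: the paper states Theorem \ref{teo-ruled-improper-affine-sphere} with a citation to Nomizu--Sasaki \cite{Nomizu1994} and gives no proof of its own. What you do is, in effect, the same integration strategy the paper uses later to prove Theorem \ref{Cayley-surface-characterization} (the Cayley characterization), carried out with $A(u)$ arbitrary instead of constant: normalize so that $B=0$, $\Omega\equiv 1$, $\xi=(0,0,1)$, integrate $q_{vv}=0$ and $q_{uv}=\xi$ to get $q(u,v)=P(u)+uv\,e_3+vQ_0$, and read off from $[q_u,q_v,\xi]=1$ the scalar relation $Q_0^2(P^1)'-Q_0^1(P^2)'=1$. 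I checked the linear algebra: the planar map $(x^1,x^2)\mapsto(Q_0^2x^1-Q_0^1x^2,\,S_{21}x^1+S_{22}x^2)$ indeed has determinant $Q_0^1S_{21}+Q_0^2S_{22}=1$, the substitution $v=Y-h(X)$ gives $z=XY+Q_0^3Y+\psi(X)$, and the final shear is unimodular, so the whole congruence is equiaffine. Your route is genuinely different from the one the paper chooses for the \emph{discrete} counterpart (the proposition that every ruled DIIAS without singularities has the form $z=x^1x^2+\varphi(x^1)$), which goes through the centre-chord construction with $\beta$ a straight line; that route is shorter once Proposition \ref{DIIIAS-centre-chord} is in hand, whereas yours is self-contained from the structural equations \eqref{Gaussequation} and \eqref{Compatibility-eq-smooth}. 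Your closing remarks are also accurate: $q_{uu}=A\,q_v$ is never needed to reach the graph form, and tracking it through your coordinates gives $h''=A$ and $2h'+\psi''=0$, hence $A=-\tfrac{1}{2}\varphi'''$, consistent with the paper's remark that $\nabla C=0$ iff $A_u=0$. Two small bookkeeping points worth making explicit: the translation of $u$ that arranges $Q_0^2P^1-Q_0^1P^2=u$ replaces $Q_0$ by $Q_0-c\,e_3$, which leaves $(Q_0^1,Q_0^2)$, and hence the rest of the argument, unchanged; and the case $A=0$ rather than $B=0$ is handled by swapping $u$ and $v$, a reduction the paper's own discussion also makes tacitly.
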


One important example of such a surface is the so called Cayley surface, when $\varphi(x^1)=-\frac{(x^1)^3}{3}$. The Cayley surface can be parameterized in asymptotic coordinates by
\begin{equation}\label{Cayley-surface-asymptotic}
  q(u,v)=\left(u,v+\frac{au^2}{2},uv+\frac{au^3}{6}\right)
\end{equation}
where $(u,v)\in U\subset\R^2$ and $a\neq 0$ is a constant. Note that this parameterization is normalized and $A=a$, which implies that $\nabla C=0$.
Next theorem implies that the conditions $C\neq 0$ and $\nabla C=0$ state that a ruled IIAS is in fact affinely equivalent to the Cayley surface:

\begin{thm}\label{Cayley-surface-characterization}
Let $q(u,v)$ be a normalized parameterization of a ruled IIAS. Then $q(u,v)$ is affinely congruent to a Cayley surface if and only if $A\neq 0$ and $A_u=0$.
\end{thm}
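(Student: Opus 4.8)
The plan is to prove both directions of the equivalence by reducing everything to the normalized structural equations. First I would record what "normalized" buys us: with $\Omega\equiv 1$ we have $q_{vv}=0$, so the $v$-curves are straight lines, and the Gauss equations \eqref{Gaussequation} collapse to
\begin{equation}\label{normalized-structure}
  q_{uu}=A\,q_v,\quad q_{uv}=\xi,\quad q_{vv}=0,
\end{equation}
where $\xi$ is the constant affine normal. The compatibility equations \eqref{Compatibility-eq-smooth} with $\Omega\equiv 1$ and $B=0$ force $A_v=0$, so $A=A(u)$ already; hence the extra hypothesis $A_u=0$ simply says $A$ is a nonzero constant.

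For the backward direction ($A\neq 0$, $A_u=0$ implies Cayley), I would integrate \eqref{normalized-structure} directly. From $q_{vv}=0$ we get $q(u,v)=q(u,0)+v\,q_v(u,0)$, linear in $v$. Writing $p(u)=q(u,0)$ and $w(u)=q_v(u,0)$, the equation $q_{uv}=\xi$ gives $w'(u)=\xi$, so $w(u)=\xi u + w_0$, and $q_{uu}=A\,q_v$ becomes $p''(u)=A\,w(u)=A(\xi u+w_0)$ with $A$ constant. Integrating these ODEs produces a cubic polynomial in $u$ and a bilinear term $uv$, matching exactly the Cayley parameterization \eqref{Cayley-surface-asymptotic} up to an affine change of coordinates in $\R^3$; normalizing $\xi$ and the integration constants by an affine map of the ambient space identifies $q$ with \eqref{Cayley-surface-asymptotic} for the appropriate $a=A$. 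I would phrase this as constructing the explicit affine map carrying one normal frame to the other.

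For the forward direction (affinely congruent to Cayley implies $A\neq 0$, $A_u=0$), the cleanest route is to observe that $A$, up to the stated normalization of the parameterization, is an affine invariant of the surface: it is the nonzero coefficient of the affine cubic form $C=-2A\,du^3$, and both $C$ and the induced connection $\nabla$ are preserved by affine congruences. For the Cayley surface \eqref{Cayley-surface-asymptotic} one computes directly, as already noted in the excerpt, that $A=a\neq 0$ and $A_u=0$ (equivalently $\nabla C=0$). Since affine congruence preserves these invariants and the normalized parameterization is essentially unique up to the allowed changes of variable $U=U(u)$, $V=V(v)$, any surface congruent to Cayley must satisfy $A\neq 0$ and $A_u=0$ as well.

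The main obstacle I expect is the uniqueness-of-normalization bookkeeping in the forward direction: one must check that the reparameterizations allowed in passing to a normalized form (the change $V=V(v)$ making $q_{vv}=0$, followed by $U=\Omega(u)$ making $\Omega\equiv 1$) do not alter the conditions $A\neq 0$ and $A_u=0$, i.e. that these are genuine invariants rather than artifacts of a particular normalized parameterization. Concretely I would verify how $A$ transforms under an allowed change $U=U(u)$ and confirm that "$A$ is a nonzero constant" is a reparameterization-invariant statement. The integration in the backward direction is routine once \eqref{normalized-structure} is written down; the real care is in matching the ambient affine frame to land precisely on \eqref{Cayley-surface-asymptotic}.
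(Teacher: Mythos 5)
Your proposal is correct, and for the substantive (backward) direction it is essentially the paper's own proof: the paper likewise specializes the structural equations \eqref{Gaussequation} under $\Omega\equiv 1$, $B=0$, $A\equiv a\neq 0$ to $q_{uu}=aq_v$, $q_{uv}=(0,0,1)$, $q_{vv}=0$, uses an affine transformation to fix $q(0,0)=(0,0,0)$, $q_u(0,0)=(1,0,0)$, $q_v(0,0)=(0,1,0)$, and integrates to land exactly on \eqref{Cayley-surface-asymptotic}; your variant of integrating first with free constants $p_0$, $p_1$, $w_0$, $\xi$ and normalizing the frame afterwards is the same computation in a different order (and your frame map is automatically unimodular since $[p_1,w_0,\xi]=\Omega(0,0)=1$). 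The one place you go beyond the paper is the forward direction: the paper's proof environment contains only the backward implication, the converse being handled implicitly by the remark preceding the theorem that the parameterization \eqref{Cayley-surface-asymptotic} is normalized with $A=a\neq 0$ constant. Your invariance argument fills this in, and your flagged bookkeeping point is exactly the right one to check: normalized parameterizations of a fixed ruled IIAS are unique up to $u\mapsto cu+d$, $v\mapsto v/c+e$ (the constraint $u'v'\equiv 1$ forces affine changes), under which $A\mapsto c^3A$, so the conditions $A\neq 0$ and $A_u=0$ are indeed independent of the choice of normalized parameterization and are preserved by equi-affine congruence. This makes your write-up slightly more complete than the paper's, at the cost of the reparameterization computation the paper never carries out.
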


\begin{proof}
Assume that $A$ is a non-zero constant $a$. Then Equations \eqref{Gaussequation} imply that
\begin{equation}\label{StructuralCayleyNormal}
q_{uu}=aq_v,\ \ q_{uv}=(0,0,1), \ \ q_{vv}=0.
\end{equation}
By an affine transformation, we may assume that $q(0,0)=(0,0,0)$, $q_u(0,0)=(1,0,0)$ and $q_v(0,0)=(0,1,0)$. Now Equations \eqref{StructuralCayleyNormal} show that $q_v=(0,1,u)$, and $q_u=(1,au,\tfrac{au^2}{2}+v)$, which in turn implies that $q$ is given by Equation \eqref{Cayley-surface-asymptotic}.
\end{proof}

Any ruled IIAS can be otained by the centre-chord construction from a planar curve $\alpha(u)$ and a planar line $\beta(v)$. The singular points of the IIAS are the pairs $(u_0,v)$ such that $\tfrac{d\alpha}{du}(u_0)$ is parallel to $\beta$ and $v\in\mathbb{R}$. Thus the MPTL is generically a discrete set of lines parallel to $\beta$ and the singularities of the IIAS is a discrete set of spatial lines, whose points are all cuspidal edges of the IIAS.

\section{Basics on asymptotic nets}

\subsection{Discrete derivatives}

Given a discrete function $f: I\subset\Z\To\R^l$, its discrete derivative
is defined as
$f'(u+\tfrac{1}{2})=f(u+1)-f(u)$. Similarly, for $g: I^*\subset\Z^*\to\R^l$,  its discrete derivative is defined by
$g'(u)=g(u+\tfrac{1}{2})-g(u-\tfrac{1}{2})$. We are denoting by $\Z^*=\Z+\{\tfrac{1}{2}\}$ the dual of $\Z$.

Consider now $f: D\subset\Z^2\To\R^l$. The first discrete partial derivative $f_1$ at $(u,v)\in D$ is the discrete derivative of $f(u,v)$ with respect to $u$, with fixed $v$. In other words,
$$
f_1(u+\tfrac{1}{2},v)=f(u+1,v)-f(u,v).
$$
The second discrete partial derivative $f_2$ is defined similarly. In the same way we can  define the discrete partial derivatives of a function $g: D^*\subset(\Z^2)^*\To\R^l$, where $(\Z^2)^*=\Z^2+\{(\tfrac{1}{2},\tfrac{1}{2})\}$ denotes the dual of $\Z^2$.

The discrete second order partial derivatives of $f$, denoted $f_{ij}$, are defined to be the discrete partial derivative of $f_i$ in the $j$-direction, $i,j=1,2$.
Observe that
$$
f_{12}(u+\tfrac{1}{2},v+\tfrac{1}{2})=f_{21}(u+\tfrac{1}{2},v+\tfrac{1}{2})=f(u+1,v+1)-f(u,v+1)-f(u+1,v)+fu,v).
$$
Similarly, we can define discrete second order partial derivatives for a function $g: D^*\subset(\Z^2)^*\To\R^k$ and verify that $g_{12}=g_{21}$.

Finally, discrete third order partial derivatives $f_{ijk}$ are defined to be the discrete partial derivative of $f_{ij}$ in the $k$-direction, $i,j,k=1,2$. It is not difficult to verify that $f_{112}=f_{121}=f_{211}$ and similarly, $f_{122}=f_{212}=f_{221}$.

\subsection{Asymptotic nets}

A net $q:D\subset\Z^2\To\R^3$ is called asymptotic if the ``crosses are planar'', \emph{i.e.}, $q_1(u+\frac{1}{2},v)$, $q_1(u-\tfrac{1}{2},v)$, $q_2(u,v+\tfrac{1}{2})$ and $q_2(u,v-\tfrac{1}{2})$ are coplanar (see \cite{Bobenko2008}, p.66). From the coplanarity we obtain that
$$
\left[\,q_1\left(u\pm\tfrac{1}{2},v\right),\,q_2\left(u,v\pm\tfrac{1}{2}\right),\,q_{11}\left(u,v\right)\,\right]
=\left[\,q_1\left(u\pm\tfrac{1}{2},v\right),\,q_2\left(u,v\pm\tfrac{1}{2}\right),\,q_{22}\left(u,v\right)\,\right]=0,
$$
and we can assume
$$
M\left(u+\tfrac{1}{2},v+\tfrac{1}{2}\right)=\left[\,q_1\left(u+\tfrac{1}{2},v\right),\,q_2\left(u,v+\tfrac{1}{2}\right),\,q_{12}\left(u+\tfrac{1}{2},v+\tfrac{1}{2}\right)\,\right]>0.
$$

Similarly to the smooth case, the \emph{affine metric} $\Omega$ at a quadrangle $\left(u+\tfrac{1}{2},v+\tfrac{1}{2}\right)$ is defined by
\begin{equation}\label{affine-metric}
  \Omega\left(u+\tfrac{1}{2},v+\tfrac{1}{2}\right)=\sqrt{M\left(u+\tfrac{1}{2},v+\tfrac{1}{2}\right)}.
\end{equation}
We also define the affine normal vector by
\begin{equation}\label{normal-xie}
    \xi\left(u+\tfrac{1}{2},v+\tfrac{1}{2}\right)=\frac{q_{12}\left(u+\tfrac{1}{2},v+\tfrac{1}{2}\right)}{\Omega\left(u+\tfrac{1}{2},v+\tfrac{1}{2}\right)},\phantom{aaaa}
\end{equation}
(see \cite {Craizer2010}).

\subsection{Discrete improper affine spheres}

An asymptotic net is said to be a \emph{discrete improper affine sphere} (DIIAS) if the affine normal $\xi$ is constant.
From now on we shall be considering only this case, and we shall denote $\xi(u+\tfrac{1}{2},v+\tfrac{1}{2})$ by $\xi$. Thus we can write
$$
q_{12}\left(u+\tfrac{1}{2},v+\tfrac{1}{2}\right)=\Omega\left(u+\tfrac{1}{2},v+\tfrac{1}{2}\right)\xi.
$$
We define the coefficients of the cubic form by
$$
A(u,v)=\left[q_1\left(u-\tfrac{1}{2},v\right),q_1\left(u+\tfrac{1}{2},v\right),\xi\right],\ \ B(u,v)=\left[q_2\left(u,v-\tfrac{1}{2}\right),q_2\left(u,v+\tfrac{1}{2}\right),\xi\right].
$$

\begin{lem}
$A=A(u)$ and $B=B(v)$.
\end{lem}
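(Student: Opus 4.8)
The plan is to show directly that the discrete $v$-derivative of $A$ vanishes, i.e. that $A(u,v+1)=A(u,v)$, and symmetrically that $B(u+1,v)=B(u,v)$. The only two ingredients I need are the defining relation $q_{12}=\Omega\,\xi$ together with the constancy of $\xi$, and the mixed-derivative symmetry $q_{12}=q_{21}$ recorded above; everything else is just multilinearity and the alternating property of the determinant. This is the exact discrete counterpart of the smooth compatibility conditions $A_v=0$ and $B_u=0$ from \eqref{Compatibility-eq-smooth}.

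First I would translate the constancy of the affine normal into a transport rule for the edge vectors $q_1$. Since $q_{12}(u+\tfrac{1}{2},v+\tfrac{1}{2})=q_1(u+\tfrac{1}{2},v+1)-q_1(u+\tfrac{1}{2},v)$ equals $\Omega(u+\tfrac{1}{2},v+\tfrac{1}{2})\,\xi$, I obtain
$$
q_1\!\left(u+\tfrac{1}{2},v+1\right)=q_1\!\left(u+\tfrac{1}{2},v\right)+\Omega\!\left(u+\tfrac{1}{2},v+\tfrac{1}{2}\right)\xi,
$$
together with the analogous identity obtained by replacing $u+\tfrac{1}{2}$ with $u-\tfrac{1}{2}$. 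Substituting both of these into $A(u,v+1)=[q_1(u-\tfrac{1}{2},v+1),q_1(u+\tfrac{1}{2},v+1),\xi]$ and expanding, every term produced by one of the two $\Omega\,\xi$ corrections inserts a second copy of the constant vector $\xi$ into the determinant, so it vanishes; the one surviving term is exactly $[q_1(u-\tfrac{1}{2},v),q_1(u+\tfrac{1}{2},v),\xi]=A(u,v)$. Hence $A=A(u)$.

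For $B$ I would use the symmetry $q_{12}=q_{21}$ to get the transport rule in the $u$-direction,
$$
q_2\!\left(u+1,v+\tfrac{1}{2}\right)=q_2\!\left(u,v+\tfrac{1}{2}\right)+\Omega\!\left(u+\tfrac{1}{2},v+\tfrac{1}{2}\right)\xi,
$$
plug this (and its companion with $v-\tfrac{1}{2}$ in place of $v+\tfrac{1}{2}$) into $B(u+1,v)=[q_2(u+1,v-\tfrac{1}{2}),q_2(u+1,v+\tfrac{1}{2}),\xi]$, and run the identical cancellation to conclude $B=B(v)$.

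There is no genuine analytic obstacle here; the argument is purely algebraic. The one point that demands care is the bookkeeping of half-integer arguments: one must keep the indices of $q_1$, $q_2$ and $\Omega$ aligned so that both correction vectors are genuinely proportional to the \emph{same} constant $\xi$, which is precisely what forces the determinantal terms to collapse.
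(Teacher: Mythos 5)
Your proposal is correct and is essentially the paper's own argument: the paper likewise proves $A_2=0$ by using $q_1(u\pm\tfrac{1}{2},v+1)-q_1(u\pm\tfrac{1}{2},v)=\Omega(u\pm\tfrac{1}{2},v+\tfrac{1}{2})\,\xi$ (with the case of $B$ handled symmetrically via $q_{21}=q_{12}$) and then letting the repeated column $\xi$ kill the correction terms in the determinant. The only cosmetic difference is that the paper organizes the cancellation as a telescoping sum of two determinant differences, whereas you substitute the transport rule and expand by multilinearity --- the same computation.
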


\begin{proof}
We prove that $A_2(u,v+\tfrac{1}{2})=0$, the case for $B$ being similar. So $A_2(u,v+\tfrac{1}{2})=0$ is given by
$$
\left[q_1\left(u-\tfrac{1}{2},v+1\right),q_1\left(u+\tfrac{1}{2},v+1\right),\xi\right]-\left[q_1\left(u-\tfrac{1}{2},v+1\right),q_1\left(u+\tfrac{1}{2},v\right),\xi\right]+
$$
$$
+\left[q_1\left(u-\tfrac{1}{2},v+1\right),q_1\left(u+\tfrac{1}{2},v\right),\xi\right]-\left[q_1\left(u-\tfrac{1}{2},v\right),q_1\left(u+\tfrac{1}{2},v\right),\xi\right].
$$
Since $q_1\left(u\pm\tfrac{1}{2},v+1\right)-q_1\left(u\pm\tfrac{1}{2},v\right)=\Omega(u\pm\tfrac{1}{2},v+\tfrac{1}{2})\xi$, the lemma is proved.
\end{proof}

\begin{prop}\label{prop-eq-structural}
  The structural equations of the DIIAS are given by
\begin{equation}\label{eq-structural1}
\begin{array}{c}
  q_{11}(u,v)=\dfrac{\Omega_1\left(u,v+\tfrac{1}{2}\right)}{\Omega\left(u\pm\tfrac{1}{2},v+\tfrac{1}{2}\right)}\,q_1\left(u\pm\tfrac{1}{2},v\right)+
  \dfrac{A(u)}{\Omega\left(u\pm\tfrac{1}{2},v+\tfrac{1}{2}\right)}\,q_2\left(u,v+\tfrac{1}{2}\right) \\
 q_{11}(u,v)=\dfrac{\Omega_1\left(u,v-\tfrac{1}{2}\right)}{\Omega\left(u\pm\tfrac{1}{2},v-\tfrac{1}{2}\right)}\,q_1\left(u\pm\tfrac{1}{2},v\right)+
  \dfrac{A(u)}{\Omega\left(u\pm\tfrac{1}{2},v-\tfrac{1}{2}\right)}\,q_2\left(u,v-\tfrac{1}{2}\right)
\end{array}
\end{equation}
and
\begin{equation}\label{eq-structural2}
\begin{array}{c}
q_{22}(u,v)= \dfrac{B(v)}{\Omega\left(u+\tfrac{1}{2},v\pm\tfrac{1}{2}\right)}\,q_1\left(u+\tfrac{1}{2},v\right)+
  \dfrac{\Omega_2\left(u+\tfrac{1}{2},v\right)}{\Omega\left(u+\tfrac{1}{2},v\pm\tfrac{1}{2}\right)}\,q_2\left(u,v\pm\tfrac{1}{2}\right)\\
  q_{22}(u,v)= \dfrac{B(v)}{\Omega\left(u-\tfrac{1}{2},v\pm\tfrac{1}{2}\right)}\,q_1\left(u-\tfrac{1}{2},v\right)+
  \dfrac{\Omega_2\left(u+\tfrac{1}{2},v\right)}{\Omega\left(u-\tfrac{1}{2},v\pm\tfrac{1}{2}\right)}\,q_2\left(u,v\pm\tfrac{1}{2}\right)
\end{array}
\end{equation}
\end{prop}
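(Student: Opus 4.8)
The plan is to use the asymptotic (coplanarity) hypothesis to show first that the second difference $q_{11}(u,v)$ already lies in the tangent plane spanned by the relevant edge vectors, and then to read off its two coefficients by Cramer's rule in the positively oriented basis $\{q_1,q_2,\xi\}$, i.e.\ by taking determinants against the constant vector $\xi$. By definition $q_{11}(u,v)=q_1(u+\tfrac12,v)-q_1(u-\tfrac12,v)$, and the coplanarity relation gives $[q_1(u\pm\tfrac12,v),q_2(u,v\pm\tfrac12),q_{11}(u,v)]=0$. Writing $q_{11}=a\,q_1(u+\tfrac12,v)+b\,q_2(u,v+\tfrac12)+c\,\xi$, this bracket equals $c\,[q_1(u+\tfrac12,v),q_2(u,v+\tfrac12),\xi]$, so $c=0$ as soon as we know this last bracket is nonzero. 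The identity underlying everything is
\[
[\,q_1(u+\tfrac12,v),\,q_2(u,v+\tfrac12),\,\xi\,]=\Omega(u+\tfrac12,v+\tfrac12),
\]
which follows from $q_{12}=\Omega\,\xi$ together with $\Omega^2=M=[q_1,q_2,q_{12}]$.

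With $c=0$ in hand, I would extract $a$ and $b$ by Cramer's rule. For the coefficient of $q_2(u,v+\tfrac12)$ one forms $[q_1(u+\tfrac12,v),q_{11},\xi]$, which collapses to $[q_1(u-\tfrac12,v),q_1(u+\tfrac12,v),\xi]=A(u)$, giving the coefficient $A(u)/\Omega(u+\tfrac12,v+\tfrac12)$. For the coefficient of $q_1(u+\tfrac12,v)$ one forms $[q_{11},q_2(u,v+\tfrac12),\xi]$; the crucial step here is the \emph{mixed bracket}
\[
[\,q_1(u-\tfrac12,v),\,q_2(u,v+\tfrac12),\,\xi\,]=\Omega(u-\tfrac12,v+\tfrac12),
\]
obtained by substituting $q_2(u,v+\tfrac12)=q_2(u-1,v+\tfrac12)+\Omega(u-\tfrac12,v+\tfrac12)\,\xi$ (a rewriting of $q_{21}=\Omega\,\xi$) and discarding the resulting repeated $\xi$-column; for the rows at $v-\tfrac12$ this shift is simply applied in both directions. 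The bracket then becomes $\Omega(u+\tfrac12,v+\tfrac12)-\Omega(u-\tfrac12,v+\tfrac12)=\Omega_1(u,v+\tfrac12)$, yielding $\Omega_1(u,v+\tfrac12)/\Omega(u+\tfrac12,v+\tfrac12)$. The same two brackets, now normalized instead against $\Omega(u-\tfrac12,v+\tfrac12)=[q_1(u-\tfrac12,v),q_2(u,v+\tfrac12),\xi]$, produce the version of \eqref{eq-structural1} with the opposite choice in the $\pm$, and the second row of \eqref{eq-structural1} is the identical computation performed in the quadrangle below $(u,v)$, using $q_2(u,v-\tfrac12)$ as second basis vector.

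The equations \eqref{eq-structural2} for $q_{22}$ follow by the verbatim dual argument with the two lattice directions interchanged: write $q_{22}(u,v)=q_2(u,v+\tfrac12)-q_2(u,v-\tfrac12)$, check tangentiality as above, and recover the coefficient of $q_2(u,v\pm\tfrac12)$ as $\Omega_2(u+\tfrac12,v)/\Omega$ from the mixed bracket $[q_1(u+\tfrac12,v),q_2(u,v-\tfrac12),\xi]=\Omega(u+\tfrac12,v-\tfrac12)$, and the coefficient of $q_1(u\pm\tfrac12,v)$ from $B(v)=[q_2(u,v-\tfrac12),q_2(u,v+\tfrac12),\xi]$. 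I expect the only genuine obstacle to be the orientation bookkeeping: since the determinant is alternating, one must keep $q_1,q_2,\xi$ in their positively oriented order throughout and track precisely which slot $q_{11}$ (resp.\ $q_{22}$) occupies in each application of Cramer's rule, so that the signs in front of $A$, $B$, $\Omega_1$ and $\Omega_2$ come out exactly as stated. The mixed bracket identities are the one non-formal ingredient, and they rely essentially on $\xi$ being constant, which is what makes $q_{12}=\Omega\,\xi$ available for trading an edge on one side of a quadrangle against the parallel edge on the opposite side.
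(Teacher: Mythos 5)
The paper gives no internal proof of this proposition at all --- it defers to \cite{Craizer2010} --- so there is no in-paper argument to compare against; your Cramer's-rule reconstruction is the natural self-contained one. For \eqref{eq-structural1} it is complete and correct: coplanarity of the cross kills the $\xi$-component of $q_{11}$; the identity $[q_1(u+\tfrac12,v),q_2(u,v+\tfrac12),\xi]=\Omega(u+\tfrac12,v+\tfrac12)$ does follow from $M=\Omega^2$ and $q_{12}=\Omega\,\xi$; your mixed bracket $[q_1(u-\tfrac12,v),q_2(u,v+\tfrac12),\xi]=\Omega(u-\tfrac12,v+\tfrac12)$ is valid exactly by the substitution you describe (and it is indeed where constancy of $\xi$ enters); and the two Cramer brackets give $\Omega_1/\Omega$ and $A/\Omega$ with the signs as stated, because $[q_1(u+\tfrac12,v),q_{11},\xi]=[q_1(u-\tfrac12,v),q_1(u+\tfrac12,v),\xi]=A(u)$ --- two transpositions, which cancel.

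The gap is precisely in the step you deferred: for \eqref{eq-structural2} the signs do \emph{not} ``come out exactly as stated'' if you use the paper's displayed definition $B(v)=[q_2(u,v-\tfrac12),q_2(u,v+\tfrac12),\xi]$. Your own recipe gives, for the coefficient of $q_1(u+\tfrac12,v)$,
\begin{equation*}
\frac{[\,q_{22}(u,v),\,q_2(u,v+\tfrac12),\,\xi\,]}{\Omega(u+\tfrac12,v+\tfrac12)}
=\frac{-[\,q_2(u,v-\tfrac12),\,q_2(u,v+\tfrac12),\,\xi\,]}{\Omega(u+\tfrac12,v+\tfrac12)}
=-\frac{B(v)}{\Omega(u+\tfrac12,v+\tfrac12)},
\end{equation*}
opposite to the proposition: here there is only one transposition, so no cancellation as in the $A$ case. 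This is a sign inconsistency of the paper rather than of your method: the $B$ for which \eqref{eq-structural2}, the compatibility equation, and Proposition \ref{centre-chord-DIIIAS} (where $B=-\tfrac14[\beta_2(v-\tfrac12),\beta_2(v+\tfrac12)]$) hold simultaneously is the reversed-order bracket $[q_2(u,v+\tfrac12),q_2(u,v-\tfrac12),\xi]$, and the same mismatch already occurs in the smooth Section 2, where the displayed $B=[q_v,q_{vv},\xi]$ equals \emph{minus} the coefficient appearing in \eqref{Gaussequation}. A concrete check: for $q(u,v)=(u+v^2,\,v,\,uv+\tfrac13(v^3-v))$ (centre-chord net of $\alpha(u)=(2u,0)$, $\beta(v)=(2v^2,2v)$, with $\Omega\equiv1$, $\xi=(0,0,1)$) one finds $q_{22}=2\,q_1(u+\tfrac12,v)$, yet $[q_2(u,v-\tfrac12),q_2(u,v+\tfrac12),\xi]=-2$. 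So to close your proof you must either adopt the reversed-order definition of $B$ or record the sign correction in \eqref{eq-structural2}; the assertion in your final paragraph that the bookkeeping will match the statement verbatim is the one step of the proposal that fails.
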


For a proof of this proposition, see \cite{Craizer2010}. There is also a compatibility equation analogous to the Equation \ref{Compatibility-eq-smooth} in the smooth case:

\begin{prop}
The following compatibility equation holds:
$$
\Omega\left(u+\tfrac{1}{2},v-\tfrac{1}{2}\right)\Omega\left(u-\tfrac{1}{2},v+\tfrac{1}{2}\right)-\Omega\left(u+\tfrac{1}{2},v+\tfrac{1}{2}\right)\Omega\left(u-\tfrac{1}{2},v-\tfrac{1}{2}\right)=A(u)B(v).
$$
\end{prop}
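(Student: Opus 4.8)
The plan is to reproduce, in the discrete setting, the mechanism behind the smooth compatibility equation \eqref{Compatibility-eq-smooth}, whose left--hand side $\Omega_{uv}\Omega-\Omega_u\Omega_v$ is replaced here by the discrete mixed Hessian $\Omega(u+\tfrac12,v-\tfrac12)\Omega(u-\tfrac12,v+\tfrac12)-\Omega(u+\tfrac12,v+\tfrac12)\Omega(u-\tfrac12,v-\tfrac12)$ assembled from the four quadrangles meeting at the vertex $(u,v)$. The entire argument rests on one consequence of the constancy of $\xi$ for a DIIAS: since $q_{12}=\Omega\,\xi$, shifting the integer base--point of a first derivative only adds a vector parallel to $\xi$, which is invisible to any determinant whose third slot is $\xi$. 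So first I would isolate this as a base--point independence statement and then feed it into a vector identity coming from the structural equations \eqref{eq-structural1}.

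Concretely, I would record that for all integers $u',v'$,
$$
\left[\,q_1\!\left(u+\tfrac12,v'\right),\,q_2\!\left(u',v+\tfrac12\right),\,\xi\,\right]=\Omega\!\left(u+\tfrac12,v+\tfrac12\right).
$$
Increasing $v'$ by one changes $q_1$ by $q_{12}(u+\tfrac12,v'+\tfrac12)=\Omega(u+\tfrac12,v'+\tfrac12)\xi$, and increasing $u'$ by one changes $q_2$ by $q_{21}(u'+\tfrac12,v+\tfrac12)=\Omega(u'+\tfrac12,v+\tfrac12)\xi$; in both cases the added term is parallel to $\xi$ and drops out of the determinant, which proves independence of $u',v'$. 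Evaluating at $u'=u$, $v'=v$ and comparing with $M=\Omega^2=[q_1,q_2,q_{12}]$ from \eqref{affine-metric} identifies the common value as $\Omega(u+\tfrac12,v+\tfrac12)$.

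Next I would extract a vector identity from the first line of \eqref{eq-structural1}, taking the upper sign, clearing the denominator, and substituting $q_{11}(u,v)=q_1(u+\tfrac12,v)-q_1(u-\tfrac12,v)$ together with $\Omega_1(u,v+\tfrac12)=\Omega(u+\tfrac12,v+\tfrac12)-\Omega(u-\tfrac12,v+\tfrac12)$. The term in $q_1(u+\tfrac12,v)$ cancels on both sides, leaving
$$
\Omega\!\left(u+\tfrac12,v+\tfrac12\right)q_1\!\left(u-\tfrac12,v\right)=\Omega\!\left(u-\tfrac12,v+\tfrac12\right)q_1\!\left(u+\tfrac12,v\right)-A(u)\,q_2\!\left(u,v+\tfrac12\right).
$$
I would then apply the functional $[\,\cdot\,,q_2(u,v-\tfrac12),\xi]$ to both sides. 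By base--point independence the two determinants involving a $q_1$ become $\Omega(u-\tfrac12,v-\tfrac12)$ and $\Omega(u+\tfrac12,v-\tfrac12)$, while the remaining determinant $[q_2(u,v+\tfrac12),q_2(u,v-\tfrac12),\xi]$ matches the defining expression of the cubic coefficient $B(v)$; collecting the terms yields the claimed identity.

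The main obstacle is the sign and orientation bookkeeping: everything above is algebraically routine once base--point independence is available, but the determinant $[q_2(u,v+\tfrac12),q_2(u,v-\tfrac12),\xi]$ must be matched against the definition $B(v)=[q_2(u,v-\tfrac12),q_2(u,v+\tfrac12),\xi]$ with the correct ordering, and the orientation convention fixing $\Omega=[q_1,q_2,\xi]>0$ must be tracked consistently, so that the right--hand side is assembled as precisely $A(u)B(v)$. As an independent cross--check I would redo the computation in the affine frame where $\xi=(0,0,1)$: writing $q=(\tilde x,z)$, the relation $q_{12}=\Omega\,\xi$ gives $\tilde x_{12}=0$, so $\tilde x_1(u+\tfrac12,v)$ depends only on $u$ and $\tilde x_2(u,v+\tfrac12)$ only on $v$; all four values of $\Omega$ and both $A(u)$, $B(v)$ then become $2\times2$ determinants of these two families of edge vectors, and the statement reduces to the Grassmann--Pl\"ucker relation $(p\wedge q)(r\wedge s)-(p\wedge s)(r\wedge q)=(p\wedge r)(q\wedge s)$, which exhibits the product structure $A(u)B(v)$ transparently.
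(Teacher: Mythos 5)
Your skeleton is correct and it is a genuinely different route from the paper's. The paper computes the full expansion of $q_{112}(u,v+\tfrac{1}{2})$ in the frame $\{q_1,q_2,\xi\}$ --- which forces it to invoke \eqref{eq-structural2} as well, to rewrite a $q_{22}$ term --- equates it with $q_{121}(u,v+\tfrac{1}{2})=\Omega_1(u,v+\tfrac{1}{2})\xi$, and reads off the vanishing of the $q_1(u+\tfrac{1}{2},v)$-component. You instead package the same information as a base-point-independence lemma plus a two-term vector identity, and finish with a single determinant evaluation. Your independence lemma is a correct consequence of $q_{12}=\Omega\xi$, your vector identity follows correctly from \eqref{eq-structural1}, and the Grassmann--Pl\"ucker reduction in the frame $\xi=(0,0,1)$ is indeed the underlying mechanism. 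In this respect your argument is leaner than the paper's, since it never needs \eqref{eq-structural2}.

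However, the sign bookkeeping you defer as ``routine'' is exactly where the proof fails to close, and it cannot be fixed by care alone, because it runs into an inconsistency of conventions inside the paper itself. Apply your functional $[\,\cdot\,,q_2(u,v-\tfrac{1}{2}),\xi]$ and use the paper's displayed definition $B(v)=[q_2(u,v-\tfrac{1}{2}),q_2(u,v+\tfrac{1}{2}),\xi]$, as your ``correct ordering'' remark instructs: the term $-A(u)\,q_2(u,v+\tfrac{1}{2})$ contributes $-A(u)[q_2(u,v+\tfrac{1}{2}),q_2(u,v-\tfrac{1}{2}),\xi]=+A(u)B(v)$ to the right-hand side, and collecting terms gives
\begin{equation*}
\Omega\left(u+\tfrac{1}{2},v-\tfrac{1}{2}\right)\Omega\left(u-\tfrac{1}{2},v+\tfrac{1}{2}\right)-\Omega\left(u+\tfrac{1}{2},v+\tfrac{1}{2}\right)\Omega\left(u-\tfrac{1}{2},v-\tfrac{1}{2}\right)=-A(u)B(v),
\end{equation*}
the negative of the claim. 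The reason is that the displayed determinant definition of $B$ is inconsistent, by a sign, with the structural equations: applying $[\,\cdot\,,q_2(u,v+\tfrac{1}{2}),\xi]$ to \eqref{eq-structural2} and writing $q_{22}(u,v)=q_2(u,v+\tfrac{1}{2})-q_2(u,v-\tfrac{1}{2})$ shows that the coefficient $B(v)$ appearing in \eqref{eq-structural2} equals $[q_2(u,v+\tfrac{1}{2}),q_2(u,v-\tfrac{1}{2}),\xi]$, i.e.\ the \emph{opposite} ordering; the same sign convention appears in Proposition \ref{centre-chord-DIIIAS}, where $B=-\tfrac{1}{4}[\beta_2(v-\tfrac{1}{2}),\beta_2(v+\tfrac{1}{2})]$. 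The paper's own proof works entirely with the structural coefficient and never touches the displayed determinant definition, so it proves the identity for that $B$. To close your argument you must do the same: identify your last determinant with $B$ through \eqref{eq-structural2}, namely $B(v)=[q_2(u,v+\tfrac{1}{2}),q_2(u,v-\tfrac{1}{2}),\xi]$, rather than through the displayed formula. With that identification your proof (and likewise your Pl\"ucker cross-check, which under the displayed definitions of both $A$ and $B$ also produces $-A(u)B(v)$) yields exactly the stated equation; as written, your ``matching'' step silently asserts one convention while citing the other, and that is a genuine gap.
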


\begin{proof}
We know that
$$
q_{121}(u,v+\tfrac{1}{2})=\Omega_1(u,v+\tfrac{1}{2})\xi.
$$
On the other hand, Equations \eqref{eq-structural1} imply that $q_{112}(u,v+\tfrac{1}{2})$ is given by
$$
\dfrac{\Omega_1\left(u,v+\tfrac{1}{2}\right)}{\Omega\left(u+\tfrac{1}{2},v+\tfrac{1}{2}\right)}\,q_1\left(u+\tfrac{1}{2},v+1\right)-\dfrac{\Omega_1\left(u,v-\tfrac{1}{2}\right)}{\Omega\left(u+\tfrac{1}{2},v-\tfrac{1}{2}\right)}\,q_1\left(u+\tfrac{1}{2},v\right)+
$$
$$
+A(u) \left(  \dfrac{q_2\left(u,v+\tfrac{1}{2}\right)}{\Omega\left(u+\tfrac{1}{2},v+\tfrac{1}{2}\right)}- \dfrac{q_2\left(u,v-\tfrac{1}{2}\right)}{\Omega\left(u+\tfrac{1}{2},v-\tfrac{1}{2}\right)}\, \right).
$$
These expression can be written as
$$
\frac{\Omega_1(u,v+\tfrac{1}{2})}{\Omega(u+\tfrac{1}{2},v+\tfrac{1}{2})}q_{12}(u+\tfrac{1}{2},v+\tfrac{1}{2})+\left(  \frac{\Omega_1(u,v+\tfrac{1}{2})}{\Omega(u+\tfrac{1}{2},v+\tfrac{1}{2})}-\frac{\Omega_1(u,v-\tfrac{1}{2})}{\Omega(u+\tfrac{1}{2},v-\tfrac{1}{2})}  \right)q_1(u+\tfrac{1}{2},v)
$$
$$
+\frac{A(u)}{\Omega(u+\tfrac{1}{2},v+\tfrac{1}{2})}q_{22}(u,v)-\frac{A(u)\Omega_2(u+\tfrac{1}{2},v)}{\Omega(u+\tfrac{1}{2},v+\tfrac{1}{2})\Omega(u+\tfrac{1}{2},v-\tfrac{1}{2})}q_2(u,v-\tfrac{1}{2}).
$$
Now take the component $q_1(u+\tfrac{1}{2},v)$ in this expression to obtain
$$
\frac{\Omega_1(u,v+\tfrac{1}{2})}{\Omega(u+\tfrac{1}{2},v+\tfrac{1}{2})}-\frac{\Omega_1(u,v-\tfrac{1}{2})}{\Omega(u+\tfrac{1}{2},v-\tfrac{1}{2})}+\frac{A(u)B(v)}{\Omega(u+\tfrac{1}{2},v+\tfrac{1}{2})\Omega(u+\tfrac{1}{2},v-\tfrac{1}{2})}=0,
$$
which proves the proposition.
\end{proof}

\subsection{The $x$-net and the $q$-net}

Consider a DIIAS with $\xi=(0,0,1)$ and write $q(u,v)=(x(u,v),z(u,v)$, with $x(u,v)\in\R^2$, $z(u,v)\in\R$.
The planar net $x(u,v)$, $(u,v)\in D\subset\mathbb{Z}^2$, will be called the $x$-net. The asymptotic spatial net $q(u,v)$, $(u,v)\in D\subset\mathbb{Z}^2$, will be called the $q$-net. Since $x_{12}(u,v)=0$, the quadrangles of the $x$-net are all paralellograms.

\subsection{Bilinear patches}

A quadrangle is defined by its vertices $(u,v)$, $(u+1,v)$, $(u,v+1)$ and $(u+1,v+1)$ in the domain $D$. For short, we shall sometimes denote it by its center $(u+\tfrac{1}{2},v+\tfrac{1}{2})$.
For each such quadrangle,
there exists a unique bilinear patch contained in a hyperbolic paraboloid with affine normal $\xi$ and passing through $q(u,v)$, $q(u+1,v)$, $q(u,v+1)$ and $q(u+1,v+1)$. We shall denote this bilinear patch by $BP=BP(u+\tfrac{1}{2},v+\tfrac{1}{2})$. A parameterization of this bilinear patch is given by
$$
BP(s,t)=q(u,v)+sq_1\left(u+\tfrac{1}{2},v\right)+tq_2\left(u,v+\tfrac{1}{2}\right)+st q_{12}\left(u+\tfrac{1}{2},v+\tfrac{1}{2}  \right),
$$
$0\leq s,t\leq 1$.
The tangent plane to $BP$ at $(u,v)$ contains $q_1(u+\tfrac{1}{2},v)$ and $q_2(u,v+\tfrac{1}{2})$, thus coinciding with the star plane at $(u,v)$. At a point of the edge $(u+\tfrac{1}{2},v)$, both $BP(u+\tfrac{1}{2},v+\tfrac{1}{2})$ and $BP(u+\tfrac{1}{2},v-\tfrac{1}{2})$ share the same tangent plane, ``linear interpolators'' of the star planes at $(u,v)$ and $(u+1,v)$. We conclude that the bilinear patches are glued at the edges with the same tangent planes (\cite{Craizer2010}, \cite{Rorig2014}, \cite{Kaferbock2013}).

In this article, the bilinear patches are used just to visualize the discrete cuspidal edges and discrete swallowtails in Section 4.

\subsection{Discrete centre-chord construction}

We describe now the centre-chord construction for a general DIIAS, which can also be found in \cite{Kobayashi2020}.
Let $\alpha:I\To\R^2$ and $\beta:J\To\R^2$, where $I,J\subset\Z$, and define
\begin{equation}\label{eq:definexy}
x(u,v)=\frac{1}{2}(\alpha(u)+\beta(v)),\ \ \ y(u,v)=\frac{1}{2}(\beta(v)-\alpha(u)).
\end{equation}
Define a function $z:I\times J\To\R$ by the conditions
\begin{equation}\label{eq:definez}
z_1(u+\tfrac{1}{2},v)=[x_1(u+\tfrac{1}{2},v),y(u,v)],\ \ z_2(u,v+\tfrac{1}{2})=[x_2(u,v+\tfrac{1}{2}),y(u,v)].
\end{equation}
To prove the existence of such function $z$, we must show that $z_{12}=z_{21}$. But
$$
z_{12}(u+\tfrac{1}{2},v+\tfrac{1}{2})=[x_1(u+\tfrac{1}{2},v+1),y(u,v+1)]-[x_1(u+\tfrac{1}{2},v),y(u,v)]
$$
$$
=\frac{1}{4}[\alpha_1(u+\tfrac{1}{2}),\beta_2(v+\tfrac{1}{2})],
$$
and similarly
$$
z_{21}(u+\tfrac{1}{2},v+\tfrac{1}{2})=\frac{1}{4}[\alpha_1(u+\tfrac{1}{2}),\beta_2(v+\tfrac{1}{2})],
$$
which prove that $z$ is well-defined.

Define
\begin{equation}\label{eq:MetricCenterChord}
\Omega(u+\tfrac{1}{2},v+\tfrac{1}{2})=\tfrac{1}{4}\ [\alpha_1(u+\tfrac{1}{2}),\beta_2(v+\tfrac{1}{2})].
\end{equation}
\begin{prop} \label{centre-chord-DIIIAS}
Assume $\Omega(u+\tfrac{1}{2},v+\tfrac{1}{2})>0$, for any $(u,v)\in I\times J$. Then the net $q(u,v)=(x(u,v),z(u,v))$ defines a DIIAS with cubic form
$$
A=\tfrac{1}{4}[\alpha_1(u-\tfrac{1}{2}),\alpha_1(u+\tfrac{1}{2})], \ \ B=-\tfrac{1}{4}[\beta_2(v-\tfrac{1}{2}),\beta_2(v+\tfrac{1}{2})].
$$
\end{prop}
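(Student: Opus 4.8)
The plan is to work in the coordinates fixed by $\xi=(0,0,1)$, writing $q=(x,z)$ with $x\in\R^2$, and to check the two defining properties of a DIIAS — that the net is asymptotic (planar crosses) and that its affine normal is constant — and then to read off the cubic form. The starting observation is that, since $x=\tfrac12(\alpha+\beta)$ with $\alpha=\alpha(u)$ and $\beta=\beta(v)$, the half-edge vectors of the $x$-net are $x_1(u\pm\tfrac12,v)=\tfrac12\alpha_1(u\pm\tfrac12)$ and $x_2(u,v\pm\tfrac12)=\tfrac12\beta_2(v\pm\tfrac12)$, so that $x_{12}\equiv 0$ and the $x$-quadrangles are parallelograms. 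The corresponding half-edge vectors of $y=\tfrac12(\beta-\alpha)$ satisfy the key relations $y_1(u\pm\tfrac12,v)=-x_1(u\pm\tfrac12,v)$ and $y_2(u,v\pm\tfrac12)=x_2(u,v\pm\tfrac12)$, which carry the whole geometry of the construction.

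First I would establish that the net is asymptotic. At a fixed vertex $(u,v)$ consider the planar covector $n=n(u,v)$ given by $n(p)=[p,y(u,v)]$; its graph $\{(p,n(p)):p\in\R^2\}$ is a $2$-plane in $\R^3$, and I claim each of the four tangent half-edges $q_1(u\pm\tfrac12,v)$, $q_2(u,v\pm\tfrac12)$ lies in it, which gives the coplanarity. For the two forward edges this is immediate from the defining relations \eqref{eq:definez}, since $z_1(u+\tfrac12,v)=[x_1(u+\tfrac12,v),y(u,v)]=n(x_1(u+\tfrac12,v))$, and likewise for $z_2(u,v+\tfrac12)$. For the backward edges one telescopes $y$: from $y_1(u-\tfrac12,v)=-x_1(u-\tfrac12,v)$ we get $y(u-1,v)=y(u,v)+x_1(u-\tfrac12,v)$, whence $z_1(u-\tfrac12,v)=[x_1(u-\tfrac12,v),y(u-1,v)]=[x_1(u-\tfrac12,v),y(u,v)]=n(x_1(u-\tfrac12,v))$, the extra term vanishing because $[p,p]=0$; the backward $v$-edge is handled identically using $y(u,v-1)=y(u,v)-x_2(u,v-\tfrac12)$. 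Hence all four half-edges have $z$-component equal to $n$ evaluated on their $x$-component, so they are coplanar and the net is asymptotic.

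Next I would read off the affine normal and the metric. The computation of $z_{12}$ already carried out before the statement gives $q_{12}(u+\tfrac12,v+\tfrac12)=(0,0,z_{12})$ with $z_{12}=\tfrac14[\alpha_1(u+\tfrac12),\beta_2(v+\tfrac12)]=\Omega(u+\tfrac12,v+\tfrac12)$, which is positive by hypothesis. Thus $q_{12}=\Omega\,(0,0,1)$, so the affine normal $\xi=q_{12}/\Omega$ equals the constant vector $(0,0,1)$ and the net is a DIIAS. Expanding the triple product along its last row, $M=[\,q_1(u+\tfrac12,v),q_2(u,v+\tfrac12),q_{12}(u+\tfrac12,v+\tfrac12)\,]=z_{12}\cdot[\,x_1(u+\tfrac12,v),x_2(u,v+\tfrac12)\,]=\Omega\cdot\tfrac14[\alpha_1,\beta_2]=\Omega^2>0$, so $\sqrt M=\Omega$ and the metric of the net agrees with \eqref{eq:MetricCenterChord}.

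Finally the cubic form is a direct determinant computation. Since $\xi=(0,0,1)$, expanding the brackets defining $A$ and $B$ along the $\xi$-row collapses them to planar determinants of the edge vectors $\tfrac12\alpha_1(u\pm\tfrac12)$ and $\tfrac12\beta_2(v\pm\tfrac12)$, which immediately yields $A=\tfrac14[\alpha_1(u-\tfrac12),\alpha_1(u+\tfrac12)]$ and the analogous planar determinant for $B$; keeping track of the orientation forced by the structural equations \eqref{eq-structural1}--\eqref{eq-structural2} (equivalently, of the order in which the two $v$-edges enter, which is opposite to the $u$-case) produces the sign $B=-\tfrac14[\beta_2(v-\tfrac12),\beta_2(v+\tfrac12)]$, exactly paralleling the smooth formulas \eqref{OmegaCentreChord}. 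I expect the planarity step to be the real content of the argument — the relations $y_1=-x_1$, $y_2=x_2$ are precisely what force the crosses to be planar — while the constant-normal and cubic-form parts are routine once $\xi=(0,0,1)$ is fixed; the one place demanding care is the sign of $B$, which I would pin down unambiguously by cross-checking against the compatibility equation $\Omega(u+\tfrac12,v-\tfrac12)\Omega(u-\tfrac12,v+\tfrac12)-\Omega(u+\tfrac12,v+\tfrac12)\Omega(u-\tfrac12,v-\tfrac12)=A(u)B(v)$, which a short Plücker-type identity for the $\Omega$'s reduces to $-\tfrac1{16}[\alpha_1(u-\tfrac12),\alpha_1(u+\tfrac12)]\,[\beta_2(v-\tfrac12),\beta_2(v+\tfrac12)]$, confirming the opposite signs of $A$ and $B$.
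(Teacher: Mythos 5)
Your proof is correct, but it reaches the conclusion by a genuinely different route than the paper. The paper's proof works entirely in the plane: it expands the second differences $\alpha_{11}$ and $\beta_{22}$ in the moving basis $\{\alpha_1(u\pm\tfrac{1}{2}),\beta_2(v\pm\tfrac{1}{2})\}$ via Cramer's rule, computes the coefficients $\lambda^i_\pm,\mu^i_\pm$ explicitly, and then lifts these planar relations to $q$ using \eqref{eq:definexy} and \eqref{eq:definez}; this single computation yields simultaneously the planarity of the crosses, the structural equations \eqref{eq-structural1}--\eqref{eq-structural2} for the net, and, by reading off $\mu^1_\pm$ and $\lambda^2_\pm$, the cubic form with the correct signs, including the minus sign of $B$. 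You decouple these steps: coplanarity is established by the observation that all four edge vectors at $(u,v)$ lie in the graph of the linear functional $p\mapsto[p,y(u,v)]$ — a basis-free argument the paper does not make, resting on the relations $y_1=-x_1$, $y_2=x_2$, and arguably cleaner — and the cubic form is then computed separately by expanding the determinants along the $\xi$-row. The price is paid exactly at the sign of $B$: a literal expansion of the paper's bracket $[q_2(u,v-\tfrac{1}{2}),q_2(u,v+\tfrac{1}{2}),\xi]$ gives $+\tfrac{1}{4}[\beta_2(v-\tfrac{1}{2}),\beta_2(v+\tfrac{1}{2})]$, and the stated minus sign appears only when $B$ is understood as the coefficient of $q_1$ in \eqref{eq-structural2}, equivalently as $[q_{22},q_2(u,v+\tfrac{1}{2}),\xi]$ — the convention the paper's Cramer's-rule computation produces automatically and the one consistent with the smooth formula \eqref{OmegaCentreChord}. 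You identified and resolved this correctly, and your Plücker-identity verification against the compatibility equation is sound; just note that this cross-check by itself pins down the sign of $B$ only at parameters where $A\neq 0$ (the product $AB$ vanishes otherwise), so it is the structural-equation identification, not the cross-check, that must carry the argument in general, e.g.\ for ruled nets where $A\equiv 0$.
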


\begin{proof}
Writing
$$
\alpha_{11}(u,v)=\lambda_{\pm}^1\alpha_1(u\pm\tfrac{1}{2})+\mu_{\pm}^1\beta_2(v\pm\tfrac{1}{2}),
$$
we obtain
$$
\lambda_{\pm}^1=\frac{\Omega_1(u,v\pm\tfrac{1}{2})}{\Omega(u\pm\tfrac{1}{2},v\pm\tfrac{1}{2})},\ \
\mu_{\pm}^1=\frac{A(u,v)}{\Omega(u\pm\tfrac{1}{2},v\pm\tfrac{1}{2})}.
$$
Similarly, we have that
$$
\beta_{22}(u,v)=\lambda_{\pm}^2\alpha_1(u\pm\tfrac{1}{2})+\mu_{\pm}^2\beta_2(v\pm\tfrac{1}{2}),
$$
where
$$
\lambda_{\pm}^2=\frac{B(u,v)}{\Omega(u\pm\tfrac{1}{2},v\pm\tfrac{1}{2})},\ \
\mu_{\pm}^2=\frac{\Omega_2(u\pm\tfrac{1}{2},v)}{\Omega(u\pm\tfrac{1}{2},v\pm\tfrac{1}{2})}.
$$
Using Equations \eqref{eq:definexy} and \eqref{eq:definez}, the above equations imply that
$$
q_{11}(u,v)=\lambda_{\pm}^1q_1(u\pm\tfrac{1}{2})+\mu_{\pm}^1q_2(v\pm\tfrac{1}{2}), \ \ q_{22}(u,v)=\lambda_{\pm}^2q_1(u\pm\tfrac{1}{2})+\mu_{\pm}^2q_2(v\pm\tfrac{1}{2}),
$$
thus proving that $q$ is an asymptotic net. Moreover, since $x_{12}=0$ and $z_{12}=\Omega$, we obtain
$$
q_{12}(u+\tfrac{1}{2},v+\tfrac{1}{2})=\Omega(u+\tfrac{1}{2},v+\tfrac{1}{2})\xi,
$$
where $\xi=(0,0,1)$, thus proving that $q$ is a DIIAS. Finally, from the expressions of $\mu_{\pm}^1$ and $\lambda_{\pm}^2$ we conclude that the cubic form is given by $A$ and $B$.
\end{proof}

The converse of Proposition \ref{centre-chord-DIIIAS} also holds:

\begin{prop}\label{DIIIAS-centre-chord}
 Any DIIAS $q:I\times J\To\R^3$ can be obtained by the centre-chord construction, that is, $q(u,v)=(x(u,v),z(u,v))$,
  where $x(u,v)=\frac{1}{2}(\alpha(u)+\beta(v))$, $z_1(u+\tfrac{1}{2},v)=[x_1(u+\tfrac{1}{2},v),y(u,v)]$,  $z_2(u,v+\tfrac{1}{2})=[x_2(u,v+\tfrac{1}{2}),y(u,v)]$ and $y(u,v)=\frac{1}{2}(\beta(v)-\alpha(u))$, for some polygonal lines $\alpha:I\To\R^2$ and $\beta:J\To\R^2$, where $I,J\subset\Z$.
\end{prop}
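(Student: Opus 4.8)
The plan is to invert the construction of Proposition \ref{centre-chord-DIIIAS}. Given $q=(x,z)$, I will first recover the polygonal lines $\alpha,\beta$ from the planar $x$-net, then show that the vertical coordinate $z$ agrees with the one produced by the centre-chord relations, up to an affine adjustment that can be reabsorbed into the choice of $\alpha$ and $\beta$.

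\emph{Recovering $\alpha$ and $\beta$.} Since $q$ is a DIIAS with $\xi=(0,0,1)$, we have $q_{12}=\Omega\,\xi$, so the first two coordinates give $x_{12}(u+\tfrac12,v+\tfrac12)=0$ at every quadrangle. Hence $x_1(u+\tfrac12,v)$ is independent of $v$ and $x_2(u,v+\tfrac12)$ is independent of $u$, and summing these discrete derivatives from a base point $(u_0,v_0)$ yields the telescoping identity $x(u,v)=x(u,v_0)+x(u_0,v)-x(u_0,v_0)$. Setting $\alpha(u)=2x(u,v_0)-x(u_0,v_0)$ and $\beta(v)=2x(u_0,v)-x(u_0,v_0)$ produces polygonal lines in $\R^2$ with $x=\tfrac12(\alpha+\beta)$, and I put $y=\tfrac12(\beta-\alpha)$ as required by the statement.

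\emph{Comparing the two vertical coordinates.} Feeding this pair $(\alpha,\beta)$ into Proposition \ref{centre-chord-DIIIAS} yields a DIIAS $\tilde q=(x,\tilde z)$ with the \emph{same} $x$-net, the same metric $\Omega=\tfrac14[\alpha_1,\beta_2]$, and the same cubic form $A,B$, because all of these depend only on the planar data. The third coordinate of $q_{12}=\Omega\xi$ gives $z_{12}=\Omega=\tilde z_{12}$, so $w:=z-\tilde z$ satisfies $w_{12}=0$ and is therefore separable, $w(u,v)=\Phi(u)+\Psi(v)$. I now need the rigidity statement that such a separable $w$ must be the restriction of a linear function of $x$; this is the main obstacle. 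Since $A,B,\Omega$ coincide for $q$ and $\tilde q$, both nets obey the same equations \eqref{eq-structural1}--\eqref{eq-structural2}; subtracting the third coordinates of the $q_{11}$ equation gives
\[
A(u)\,\Psi_2(v+\tfrac12)=\Phi_{11}(u)\,\Omega(u+\tfrac12,v+\tfrac12)-\Omega_1(u,v+\tfrac12)\,\Phi_1(u+\tfrac12).
\]
Expressing $\Omega$ and $\Omega_1$ through $[\alpha_1,\beta_2]$ and $[\alpha_{11},\beta_2]$, the right-hand side collapses to $[C(u),\beta_2(v+\tfrac12)]$ for a vector $C(u)=\tfrac{1}{4A(u)}\bigl(\Phi_{11}(u)\alpha_1(u+\tfrac12)-\Phi_1(u+\tfrac12)\alpha_{11}(u)\bigr)\in\R^2$; since the left-hand side is independent of $u$ and the edges $\beta_2(v+\tfrac12)$ span $\R^2$, the vector $C(u)$ must be a constant $c$. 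A symmetric computation with the $q_{22}$ equation \eqref{eq-structural2} forces $\Phi_1(u+\tfrac12)=[c,\alpha_1(u+\tfrac12)]$, whence $w=[c,\alpha+\beta]+\mathrm{const}=[2c,x]+\mathrm{const}$.

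The computational heart of the third step is the $\R^2$ Grassmann--Plücker identity $[p,q][r,s]-[p,r][q,s]+[p,s][q,r]=0$, applied to $p=c$, $q=\alpha_{11}$, $r=\alpha_1(u+\tfrac12)$, $s=\beta_2$, which is exactly what makes the two determinantal expressions reduce to a multiple of $A(u)[c,\beta_2]$; the nondegeneracy hypothesis $\Omega>0$ (so that consecutive edges of $\alpha$ and of $\beta$ are not all parallel) is what licenses the conclusion that $C(u)$ is constant, and I would have to treat the degenerate ruled configurations $A\equiv0$ or $B\equiv0$ (where $\beta_2$, resp. $\alpha_1$, fails to span) by the correspondingly simpler recursion. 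Finally, $w=[2c,x]+\mathrm{const}$ says that $q$ is obtained from $\tilde q$ by the unit-determinant shear $(x,z)\mapsto(x,z+[2c,x])$, which fixes $\xi$. Replacing $(\alpha,\beta)$ by $(\alpha+c',\beta-c')$ changes $y$ by $-c'$ and hence changes $\tilde z$ by $-[x,c']$; choosing $c'$ suitably (and adjusting the free additive constant of $z$) makes the centre-chord net of $(\alpha+c',\beta-c')$ equal to $q$, which proves that $q$ arises from the centre-chord construction.
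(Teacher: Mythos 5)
Your proposal is correct in substance, and at the decisive step it takes a genuinely different---and more careful---route than the paper. The paper's own proof matches your first two steps: recover $\alpha,\beta$ from $x_{12}=0$, and obtain $\Omega=\tfrac{1}{4}[\alpha_1,\beta_2]$, $z_{12}=\Omega$ from $q_{12}=\Omega\,\xi$ and $\Omega^2=[q_1,q_2,q_{12}]$. But it then finishes in one stroke, asserting that ``by discrete integration'' $z_1=[x_1,y]$ and $z_2=[x_2,y]$. That hides exactly the issue you single out as the main obstacle: discrete integration of $z_{12}$ determines $z_1$ only up to an arbitrary function $\phi(u+\tfrac12)$ and $z_2$ up to an arbitrary $\psi(v+\tfrac12)$, while the residual freedom $(\alpha,\beta)\mapsto(\alpha+\delta,\beta-\delta)$ can absorb only discrepancies of the special form $[\delta,x_1]$, $[\delta,x_2]$. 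Your third step supplies the missing rigidity: both $q$ and the centre-chord net $\tilde q$ are DIIAS with the same $\Omega$, $A$, $B$, so both satisfy the structural equations of Proposition \ref{prop-eq-structural}; subtracting third components, writing $\Omega_1(u,v+\tfrac12)=\tfrac14[\alpha_{11}(u),\beta_2(v+\tfrac12)]$, and applying the planar Grassmann--Pl\"ucker identity forces $\Phi_1=[c,\alpha_1]$ and $\Psi_2=[c,\beta_2]$ with one common constant $c$, i.e.\ $w=[2c,x]+\mathrm{const}$, which the shift $\delta=2c$ absorbs. Note that this is precisely where the asymptotic-net hypothesis (planarity of the crosses) actually enters: the paper's steps use only $q_{12}=\Omega\,\xi$ and the definition of $\Omega$, both of which also hold for the non-asymptotic nets $(x,\tilde z+\Phi(u)+\Psi(v))$ with arbitrary $\Phi,\Psi$, for which the stated conclusion is false. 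So the paper's argument buys brevity, while yours is the one that actually closes the logical gap.

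Two caveats on your write-up. First, your parenthetical pairing of the degenerate cases is swapped: $A\equiv 0$ is equivalent to all edges $\alpha_1$ being parallel (so it is $\alpha_1$ that fails to span), and $B\equiv 0$ to all $\beta_2$ being parallel; in fact your $q_{11}$-relation needs both $A\not\equiv0$ (to divide) and $\beta_2$ spanning (to pin down $c$), i.e.\ both $A\not\equiv0$ and $B\not\equiv0$. Second, these ruled cases cannot simply be deferred here, because the paper applies this very proposition to ruled DIIAS in Section 5. They do close along the line you indicate---for $B\equiv0$ write $\beta_2=\mu e$ with $e$ a fixed direction; the $q_{22}$-equation forces $\Psi_2/\mu$ to be constant, and the $q_{11}$-equation becomes a first-order scalar recursion for $\Phi_1$ whose solution through any initial value is realized by $[c,\alpha_1]$ for a suitable $c$ with $[c,e]$ prescribed---but this needs to be written out for the proof to be complete.
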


\noindent\emph{Proof.}
We may assume that $\xi=(0,0,1)$ and so $q(u,v)=(x(u,v),z(u,v))$, where $x(u,v)$ is the projection in the plane $z=0$.
Since
$$
q_{12}(u+\tfrac{1}{2},v+\tfrac{1}{2})=\Omega(u+\tfrac{1}{2},v+\tfrac{1}{2})\,\xi,$$
we conclude that $x_{12}=0$, which implies that
$$x(u,v)=\frac{1}{2}(\alpha(u)+\beta(v)),$$
for some polygonal lines $\alpha:I\To\R^2$ and $\beta:J\To\R^2$, where $I,J\subset\Z$.
Since
$$\begin{array}{rcl}
    q_1(u+\tfrac{1}{2},v) &=& \frac{1}{2}\left(\alpha_1(u+\tfrac{1}{2}),\,z_1(u+\tfrac{1}{2})\right),\\
    q_2(u,v+\tfrac{1}{2}) &=& \frac{1}{2}\left(\beta_2(v+\tfrac{1}{2}),\,z_2(v+\tfrac{1}{2})\right), \phantom{\dfrac{1}{1}}\\
    q_{12}(u+\tfrac{1}{2},v+\tfrac{1}{2}) &=& \left(0,\,\Omega(u+\tfrac{1}{2},v+\tfrac{1}{2})\right),
  \end{array}$$
we conclude that
$$\Omega^2(u+\tfrac{1}{2},v+\tfrac{1}{2})=[q_1,q_2,q_{12}]=\tfrac{1}{4}\Omega(u+\tfrac{1}{2},v+\tfrac{1}{2})\,\left[\alpha_1(u+\tfrac{1}{2}),\,\beta_2(v+\tfrac{1}{2})\right]$$
and so
$$\Omega(u+\tfrac{1}{2},v+\tfrac{1}{2})=\tfrac{1}{4}\left[\alpha_1(u+\tfrac{1}{2}),\,\beta_2(v+\tfrac{1}{2})\right]>0.$$
Thereafter,
$$z_{12}(u+\tfrac{1}{2},v+\tfrac{1}{2}))=\tfrac{1}{4}\left[\alpha_1(u+\tfrac{1}{2}),\,\beta_2(v+\tfrac{1}{2})\right],$$
and by discrete integration we obtain
$$
 z_1(u+\tfrac{1}{2},v)=\left[x_1(u+\tfrac{1}{2}),\,y(u,v)\right],\ \  z_2(u,v+\tfrac{1}{2})=\left[x_2(v+\tfrac{1}{2}),\,y(u,v)\right],
$$
where $y(u,v)=\frac{1}{2}(\beta(v)-\alpha(u))$, which completes the proof.\hfill$\square$

\section{Singularities of discrete improper indefinite affine spheres}

Even if we do not assume the hypothesis $\Omega>0$, we shall call the asymptotic net obtained by the centre-chord construction a DIIAS. In this context, ``singularities'' may appear.

Consider two discrete planar polygonal lines $\alpha:I\To\R^2$ and $\beta:J\To\R^2$, where $I,J\subset\Z$. We shall assume that:
\begin{itemize}
\item[$\bullet$] For any point $\alpha(u)$ and any triplet $\beta(v-1)$, $\beta(v)$, $\beta(v+1)$, terefore $\beta(v)$ is within the interior of the angle $\beta(v-1)\alpha(u)\beta(v+1)$, supposedly less than $180^\circ$.
\item[$\bullet$] For any point $\beta(v)$ and any triplet $\alpha(u-1)$, $\alpha(u)$, $\alpha(u+1)$, therefore $\alpha(u)$ is within the interior of the angle $\alpha(u-1)\beta(v)\alpha(u+1)$, supposedly less than $180^\circ$. See Figure \ref{Fig-condition-planar-curves}.
\end{itemize}

\begin{figure}[!htb]
 \centering
 \includegraphics[width=.5\linewidth]{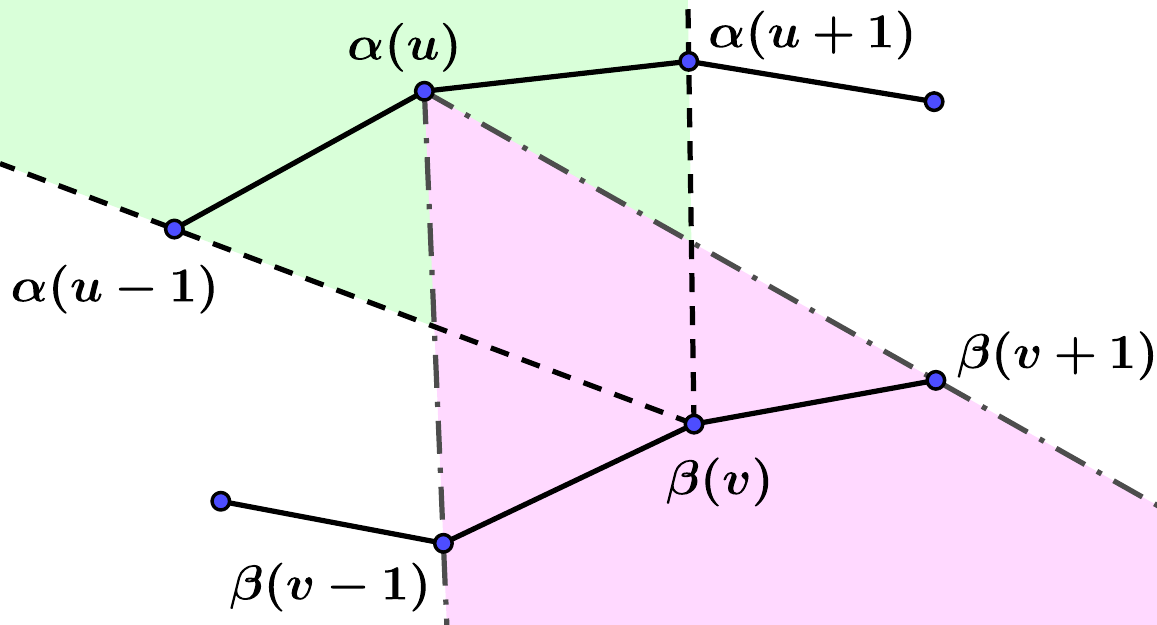}
\caption{\small Restriction to the pair of planar curves $(\alpha,\beta)$.}\label{Fig-condition-planar-curves}
\end{figure}

This restriction is made to simplify our first model of singularities, but we think that it is something to be explored in future works about the subject.

\subsection{Singular (cuspidal) edges of the asymptotic net}

An edge in the domain $D$ with endpoints $(u,v)$ and $(u+1,v)$ will be written simply as $(u+\tfrac{1}{2},v)$. Similarly, an edge with endpoints $(u,v)$ and $(u,v+1)$ will be written as $(u,v+\tfrac{1}{2})$.

The singular set of a smooth IIAS is defined by the condition $\Omega(u,v)=0$. The corresponding discrete definition is the following (see \cite{Rossman2018}):

\begin{defn}
An edge $(u+\tfrac{1}{2},v)$ is called {\it singular} if
$$
\Omega(u+\tfrac{1}{2},v+\tfrac{1}{2})\cdot\Omega(u+\tfrac{1}{2},v-\tfrac{1}{2})<0.
$$
Similarly, an edge $(u,v+\tfrac{1}{2})$ is called {\it singular} if
$$
\Omega(u+\tfrac{1}{2},v+\tfrac{1}{2})\cdot\Omega(u-\tfrac{1}{2},v+\tfrac{1}{2})<0.
$$
\end{defn}

\begin{rem} In this paper, we shall consider only asymptotic nets with $\Omega(u+\tfrac{1}{2},v+\tfrac{1}{2})\neq 0$, for any quadrangle $(u+\tfrac{1}{2},v+\tfrac{1}{2})$ in the domain $D$. Observe that this condition holds for generic asymptotic nets.
\end{rem}

In the smooth case the condition $\Omega(u,v)=0$ is equivalent to $\tfrac{d\alpha}{du}$ and $\tfrac{d\beta}{dv}$ being parallel. In the discrete setting we have the following lemma, whose proof is immediate from Equation \eqref{eq:MetricCenterChord}:

\begin{lem}\label{LemmaHalfParallel}
Let $\alpha:I\to\mathbb{R}^2$ and $\beta:J\to\mathbb{R}^2$, $I,J\subset\mathbb{Z}$, be polygonal lines. Then:

\begin{enumerate}
\item\label{HalfParallel1}
An edge $(u+\tfrac{1}{2},v)$ is singular if and only if $\beta(v-1)$ and $\beta(v+1)$ are in the same half-plane determined by the straight line given by $\beta(v)+r\alpha_1(u+\tfrac{1}{2})$, for $r\in\R$.

\item\label{HalfParallel2}
An edge $(u,v+\tfrac{1}{2})$ is singular if and only if if $\alpha(u-1)$ and $\alpha(u+1)$ are in the same half-plane determined by the straight line given by $\alpha(u)+r\beta_2(v+\tfrac{1}{2})$, for $r\in\R$.

\end{enumerate}

\end{lem}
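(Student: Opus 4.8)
The plan is to substitute the centre-chord expression for the affine metric from Equation \eqref{eq:MetricCenterChord} directly into the defining inequality for a singular edge, and then read off each factor as a signed area recording on which side of a line a vertex lies. First I would treat the edge $(u+\tfrac{1}{2},v)$ and record, from Equation \eqref{eq:MetricCenterChord},
$$
\Omega(u+\tfrac{1}{2},v+\tfrac{1}{2})=\tfrac{1}{4}[\alpha_1(u+\tfrac{1}{2}),\beta_2(v+\tfrac{1}{2})],\quad
\Omega(u+\tfrac{1}{2},v-\tfrac{1}{2})=\tfrac{1}{4}[\alpha_1(u+\tfrac{1}{2}),\beta_2(v-\tfrac{1}{2})].
$$
Since the factor $\tfrac{1}{4}$ is positive, the singularity condition $\Omega(u+\tfrac{1}{2},v+\tfrac{1}{2})\cdot\Omega(u+\tfrac{1}{2},v-\tfrac{1}{2})<0$ is equivalent to
$$
[\alpha_1(u+\tfrac{1}{2}),\beta_2(v+\tfrac{1}{2})]\cdot[\alpha_1(u+\tfrac{1}{2}),\beta_2(v-\tfrac{1}{2})]<0.
$$

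Next I would interpret the two determinants as side indicators for the line $\ell$ through $\beta(v)$ with direction $\alpha_1(u+\tfrac{1}{2})$. A point $P$ lies strictly on one or the other side of $\ell$ according to the sign of $[\alpha_1(u+\tfrac{1}{2}),P-\beta(v)]$. For $P=\beta(v+1)$ one has $P-\beta(v)=\beta_2(v+\tfrac{1}{2})$, whereas for $P=\beta(v-1)$ one has $P-\beta(v)=-\beta_2(v-\tfrac{1}{2})$. Hence the side indicators of $\beta(v+1)$ and $\beta(v-1)$ are $[\alpha_1(u+\tfrac{1}{2}),\beta_2(v+\tfrac{1}{2})]$ and $-[\alpha_1(u+\tfrac{1}{2}),\beta_2(v-\tfrac{1}{2})]$, respectively. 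The two points lie in the same half-plane determined by $\ell$ exactly when these indicators share a sign, that is, when their product is positive; rearranging, this is precisely the displayed singularity inequality above. This establishes item \ref{HalfParallel1}, and item \ref{HalfParallel2} follows by the identical argument with $\alpha$ and $\beta$ interchanged, using the second singularity condition, the antisymmetry $[a,b]=-[b,a]$ of the planar determinant, and Equation \eqref{eq:MetricCenterChord}.

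The computation is short, and the only point requiring care --- the step I would flag as the main (if minor) obstacle --- is the sign flip $\beta(v-1)-\beta(v)=-\beta_2(v-\tfrac{1}{2})$. It is exactly this minus sign that converts the ``product negative'' singularity condition into the ``same side'' geometric statement; without it one would wrongly conclude ``opposite sides''. Finally, the standing assumption that $\Omega\neq 0$ on every quadrangle guarantees that none of the vertices $\beta(v\pm 1)$ lie on $\ell$, so the half-plane dichotomy is strict and well defined, and the strict inequalities are meaningful.
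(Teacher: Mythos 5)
Your proposal is correct and is exactly the argument the paper intends: the paper states the lemma ``whose proof is immediate from Equation \eqref{eq:MetricCenterChord}'', and your computation --- substituting the centre-chord formula for $\Omega$, reading each determinant as a side indicator for the line through $\beta(v)$ (resp.\ $\alpha(u)$) with direction $\alpha_1(u+\tfrac{1}{2})$ (resp.\ $\beta_2(v+\tfrac{1}{2})$), and tracking the sign flip from $\beta(v-1)-\beta(v)=-\beta_2(v-\tfrac{1}{2})$ --- is precisely the omitted verification. Your attention to that minus sign and to the standing hypothesis $\Omega\neq 0$ (which makes the half-plane dichotomy strict) fills in the details the paper suppresses, with no deviation in method.
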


When condition (\ref{HalfParallel1}) of Lemma \ref{LemmaHalfParallel} holds, we say that
$\alpha_1(u+\tfrac{1}{2})$ is {\it discretely parallel} to $\beta(v)$. When condition (\ref{HalfParallel2}) of Lemma \ref{LemmaHalfParallel} holds,
we say that $\beta_2(v+\tfrac{1}{2})$ is {\it discretely parallel} to $\alpha(u)$ (see Figure \ref{Fig-sing-dmptl}).

\begin{figure}[!htb]
\centering
 \includegraphics[width=.45\linewidth]{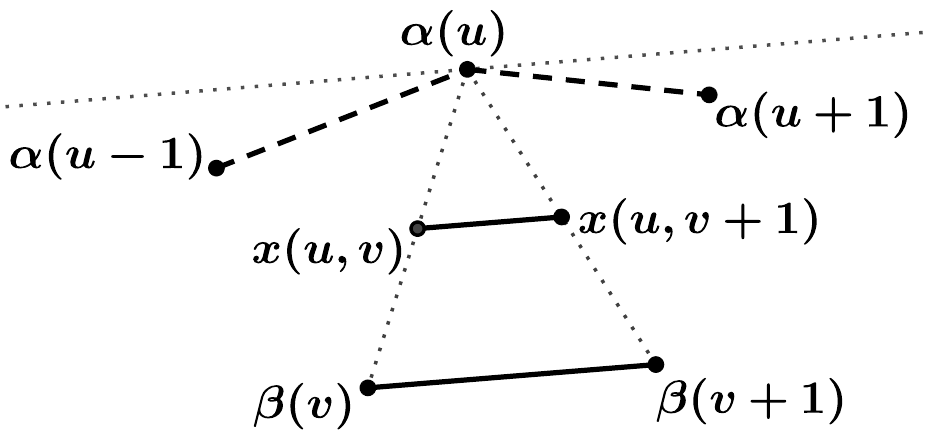}
\caption{\small Discrete parallelism between $\beta_2(v+\tfrac{1}{2})$ and $\alpha(u)$.  The edge $x_2(u,v+\tfrac{1}{2})$ belongs to the DMPTL of the pair of polygonal lines $(\alpha,\beta)$. The line styles indicate the $u$ and $v$ directions.}\label{Fig-sing-dmptl}
\end{figure}

Observe that the discrete parallelism is associated to a triangle formed by the point of one polygonal line and an edge of the other, as we can see in Figure \ref{Fig-sing-dmptl}.

We shall call the set of all midsegments of these triangles the \emph{discrete midpoint parallel tangent locus} (DMPTL) of the pair of polygonal lines $(\alpha,\beta)$.

We can characterize an edge of the DMPTL among all edges of the $x$-net by the following proposition:

\begin{prop}
Consider an edge $x_2(u,v+\tfrac{1}{2})$ of the $x$-net and denote the straight line containing it by $r(u,v+\tfrac{1}{2})$. The following statements are equivalent:
\begin{enumerate}
\item
The edge $x_2(u,v+\tfrac{1}{2})$  is an edge of the DMPTL.

\item
The straight line $r(u,v+\tfrac{1}{2})$ leaves $x(u-1,v)$ and $x(u+1,v)$ at the same half-plane.

\item
The straight line $r(u,v+\tfrac{1}{2})$ leaves $x(u-1,v+1)$ and $x(u+1,v+1)$ at the same half-plane.
\end{enumerate}
\end{prop}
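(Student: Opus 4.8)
The plan is to show that all three items are equivalent to one and the same statement, namely that the edge $(u,v+\tfrac{1}{2})$ is singular in the sense of the earlier definition; the mutual equivalence then follows by transitivity. The starting point is to identify the geometric object under discussion. Since
$$
x_2(u,v+\tfrac{1}{2})=x(u,v+1)-x(u,v)=\tfrac{1}{2}\bigl(\beta(v+1)-\beta(v)\bigr)=\tfrac{1}{2}\,\beta_2(v+\tfrac{1}{2}),
$$
the edge $x_2(u,v+\tfrac{1}{2})$ joins the midpoint of the segment $\alpha(u)\beta(v)$ to the midpoint of the segment $\alpha(u)\beta(v+1)$; hence it is precisely the midsegment of the triangle $\alpha(u)\beta(v)\beta(v+1)$ parallel to the side $\beta(v)\beta(v+1)$. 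By the definition of the DMPTL together with item (\ref{HalfParallel2}) of Lemma \ref{LemmaHalfParallel}, this midsegment belongs to the DMPTL exactly when $\beta_2(v+\tfrac{1}{2})$ is discretely parallel to $\alpha(u)$, which is the condition that the edge $(u,v+\tfrac{1}{2})$ be singular. This disposes of item (1).

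For items (2) and (3) I would reduce each half-plane condition to a product of signs of plane determinants. The line $r(u,v+\tfrac{1}{2})$ passes through $x(u,v)$ (and also through $x(u,v+1)$) with direction $\beta_2(v+\tfrac{1}{2})$, so a point $P$ lies on one side or the other according to the sign of $[\,P-x(u,v),\,\beta_2(v+\tfrac{1}{2})\,]$. Using $x(u\pm1,v)-x(u,v)=\pm\tfrac{1}{2}\alpha_1(u\pm\tfrac{1}{2})$ and the formula \eqref{eq:MetricCenterChord}, a direct computation yields
$$
[\,x(u+1,v)-x(u,v),\,\beta_2(v+\tfrac{1}{2})\,]=2\,\Omega(u+\tfrac{1}{2},v+\tfrac{1}{2}),
$$
$$
[\,x(u-1,v)-x(u,v),\,\beta_2(v+\tfrac{1}{2})\,]=-2\,\Omega(u-\tfrac{1}{2},v+\tfrac{1}{2}).
$$
Thus $x(u-1,v)$ and $x(u+1,v)$ lie in the same half-plane if and only if the product of these two determinants is positive, i.e. $\Omega(u+\tfrac{1}{2},v+\tfrac{1}{2})\cdot\Omega(u-\tfrac{1}{2},v+\tfrac{1}{2})<0$, which is again the singularity condition for the edge $(u,v+\tfrac{1}{2})$. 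This establishes item (2).

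For item (3) I would repeat the identical computation using the other endpoint $x(u,v+1)$ of the edge as the base point on the line: since $x(u\pm1,v+1)-x(u,v+1)=\pm\tfrac{1}{2}\alpha_1(u\pm\tfrac{1}{2})$ as well, exactly the same two determinants reappear, and the half-plane condition reduces once more to $\Omega(u+\tfrac{1}{2},v+\tfrac{1}{2})\cdot\Omega(u-\tfrac{1}{2},v+\tfrac{1}{2})<0$. Consequently (1), (2) and (3) are each equivalent to the singularity of $(u,v+\tfrac{1}{2})$, and therefore to one another. The only delicate point — and the main obstacle I anticipate — is the bookkeeping of signs: one must fix a consistent orientation for the determinant $[\cdot,\cdot]$, track the sign incurred in passing from $\alpha(u-1)-\alpha(u)$ to $\alpha_1(u-\tfrac{1}{2})$, and confirm that the orientation convention underlying $\Omega>0$ in \eqref{eq:MetricCenterChord} is the one used here, so that ``same half-plane'' translates into the \emph{negative} product of the two values of $\Omega$ rather than the positive one.
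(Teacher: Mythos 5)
Your proof is correct and follows essentially the same route as the paper: the paper's (one-line) proof invokes the fact that the $x$-net quadrangles are parallelograms with $x_1(u\pm\tfrac{1}{2},v)$ parallel to $\alpha_1(u\pm\tfrac{1}{2})$, which is exactly the fact underlying your identities $x(u\pm1,v)-x(u,v)=\pm\tfrac{1}{2}\alpha_1(u\pm\tfrac{1}{2})=x(u\pm1,v+1)-x(u,v+1)$. You simply make explicit (via the determinant computation reducing everything to $\Omega(u+\tfrac{1}{2},v+\tfrac{1}{2})\cdot\Omega(u-\tfrac{1}{2},v+\tfrac{1}{2})<0$, i.e.\ Lemma \ref{LemmaHalfParallel}) what the paper leaves to the reader, and your sign bookkeeping is right.
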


\begin{proof}
The proof follows directly from the fact that the quadrangles of the $x$-net are parallelograms with $x_1(u+\frac{1}{2},v)$ parallel to $\alpha_1(u+\tfrac{1}{2})$.
\end{proof}

\begin{cor}\label{HalfStarPlane}
Consider an edge $q_2(u,v+\tfrac{1}{2})$ of the asymptotic net and denote the straight line containing it by $s(u,v+\tfrac{1}{2})$. The following statements are equivalent:
\begin{enumerate}
\item
The segment $x_2(u,v+\tfrac{1}{2})$  is an edge of the DMPTL.

\item
In the star plane at $q(u,v)$, the straight line $s(u,v+\tfrac{1}{2})$ leaves $q(u-1,v)$ and $q(u+1,v)$ at the same half-plane.

\item
In the star plane at $q(u,v+1)$, the straight line $s(u,v+\tfrac{1}{2})$ leaves $q(u-1,v+1)$ and $q(u+1,v+1)$ at the same half-plane.
\end{enumerate}
\end{cor}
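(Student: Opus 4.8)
The plan is to deduce the corollary from the preceding proposition by means of the vertical projection $\pi:\R^3\To\R^2$, $\pi(x,z)=x$, which carries the $q$-net onto the $x$-net and is exactly the projection along the affine normal $\xi=(0,0,1)$. The essential observation is that $\pi$, when restricted to a single star plane, is an affine isomorphism onto the plane $z=0$, so it transports the half-plane conditions of statements $(2)$ and $(3)$ of the corollary to the corresponding half-plane conditions for the $x$-net that appear in the proposition. Since statement $(1)$ is literally the same (``$x_2(u,v+\tfrac{1}{2})$ is an edge of the DMPTL'') in both results, the corollary will follow at once.

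First I would check that the objects appearing in statement $(2)$ all lie in the star plane at $q(u,v)$. Because the net is asymptotic, the four edge vectors $q_1(u\pm\tfrac{1}{2},v)$ and $q_2(u,v\pm\tfrac{1}{2})$ of the cross at $(u,v)$ are coplanar and span the star plane. The neighbouring vertices $q(u\pm 1,v)$ and $q(u,v+1)$ are obtained from $q(u,v)$ by adding such edge vectors, hence lie in this plane; in particular the line $s(u,v+\tfrac{1}{2})$, which passes through $q(u,v)$ and $q(u,v+1)$, is contained in it.

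Next I would show that $\pi$ restricted to the star plane is an affine isomorphism onto $\{z=0\}$. Since $[q_1,q_2,\xi]=\Omega(u+\tfrac{1}{2},v+\tfrac{1}{2})\neq 0$ by the standing assumption, the direction $\xi=(0,0,1)$ is transverse to the star plane, so $\pi$ maps it bijectively and affinely onto the $x$-plane. Under this map one has $q(u,v)\mapsto x(u,v)$, $q(u\pm 1,v)\mapsto x(u\pm 1,v)$, $q(u,v+1)\mapsto x(u,v+1)$, and the line $s(u,v+\tfrac{1}{2})$ is sent to $r(u,v+\tfrac{1}{2})$. An affine isomorphism preserves the relation ``two points lie in the same half-plane determined by a line'', so statement $(2)$ of the corollary is equivalent to statement $(2)$ of the preceding proposition. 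Repeating the argument with the star plane at $q(u,v+1)$, which likewise contains $s(u,v+\tfrac{1}{2})$ together with $q(u\pm 1,v+1)$, shows that statement $(3)$ of the corollary is equivalent to statement $(3)$ of the proposition. The proposition then yields the equivalence of $(2)$ and $(3)$ with the DMPTL condition $(1)$, completing the proof.

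The only delicate point is the transversality of $\xi$ to each star plane, which is precisely what makes $\pi$ a genuine isomorphism there and hence preserves the half-plane relation; this is guaranteed by the hypothesis $\Omega\neq 0$, and without it the projection could collapse the relevant configuration and destroy the equivalence.
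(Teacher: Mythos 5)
Your proposal is correct and follows exactly the route the paper intends: the corollary is left unproved there precisely because it is meant to follow from the preceding proposition on the $x$-net via the vertical projection along $\xi=(0,0,1)$, and you supply the two details the paper leaves implicit (that the relevant vertices and the line $s(u,v+\tfrac{1}{2})$ lie in the star planes, and that $\Omega\neq 0$ makes $\xi$ transverse so the projection is an affine isomorphism of each star plane onto $\{z=0\}$, preserving half-plane relations). No gaps; this is a faithful, slightly more detailed version of the paper's argument.
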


We say that an edge $q_2(u,v+\tfrac{1}{2})$ of the DIIAS is \emph{singular} if it satisfies one, and hence all, the conditions of Corollary \ref{HalfStarPlane}. With this definition, it is clear that an edge of the DIIAS is singular if and only if its projection is an edge of the DMPTL, which is a discrete counterpart of the correponding property of the smooth case.

Observe that we can check whether or not the edge $q_2(u,v+\tfrac{1}{2})$ is singular by looking at the star plane at $q(u,v)$ or at the star plane at $q(u,v+1)$. Thus the above definition (items (2) and (3) of Corollary \ref{HalfStarPlane}) of a singular edge can be directly extended to any asymptotic net, even if it does not represent a DIIAS.

The singular edges of a DIIAS are the discrete counterpart of the cuspidal edges of the smooth IIAS. So, in the discrete setting, the expressions singular edge and {\it cuspidal edge} have the same meaning.

\begin{figure}[!htb]
\centering
 \includegraphics[width=.50\linewidth]{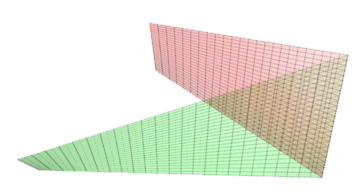}
\caption{\small Discrete cuspidal edge with the corresponding two bilinear patches, distinguished by the colors.}\label{Fig:DiscreteSwallowtailInterpolator0}
\end{figure}

In Figure \ref{Fig:DiscreteSwallowtailInterpolator0}, one can see how a discrete cuspidal edge with the bilinear interpolators looks like a smooth cuspidal edge. It is easy to verify that the two bilinear patches associated with a cuspidal edge are in the same side of the vertical plane containing this edge, and in fact this property characterizes cuspidal edges.

\subsection{Singular polygonal line}

Let us construct the DMPTL step by step. Suppose that $\alpha_1(u-\tfrac{1}{2})$ is discretely parallel to $\beta(v)$ for some $u$ and $v$, then we have formed a triangle and its midsegment is part of the DMPTL. The next step is to decide which adjacent triangle we should choose and that is going to be made clear after the following Proposition.

\begin{prop}\label{Prop-dmptl-construction}
Let $\alpha$ and $\beta$ be planar polygonal lines such that $\alpha_1(u-\tfrac{1}{2})$ is discretely parallel to $\beta(v)$. Then one and only one of the three follow statements holds:
\begin{enumerate}
\item $\alpha_1(u+\tfrac{1}{2})$ is discretely parallel to $\beta(v)$, as in Figure \ref{Fig-dmptl-first}(left);
\item $\beta_2(v+\tfrac{1}{2})$ is discretely parallel to $\alpha(u)$, as in Figure \ref{Fig-dmptl-first}(centre);
\item $\beta_2(v-\tfrac{1}{2})$ is discretely parallel to $\alpha(u)$, as in Figure \ref{Fig-dmptl-first}(right).
\end{enumerate}
\end{prop}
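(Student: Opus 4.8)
The plan is to translate the whole statement into the signs of the affine metric $\Omega$ at the four quadrangles surrounding the vertex $(u,v)$, and then to show that the two convexity restrictions on $(\alpha,\beta)$ forbid the single ``bad'' sign pattern that would make more than one of the three alternatives hold. Write $a_\pm=\alpha_1(u\pm\tfrac12)$, $b_\pm=\beta_2(v\pm\tfrac12)$, and let $\sigma_{++},\sigma_{+-},\sigma_{-+},\sigma_{--}$ be the signs of $\Omega$ at the four quadrangles $(u\pm\tfrac12,v\pm\tfrac12)$ around $(u,v)$, so that $\sigma_{\varepsilon\delta}=\operatorname{sign}[a_\varepsilon,b_\delta]$ by Equation \eqref{eq:MetricCenterChord}. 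By the definition of a singular edge and Lemma \ref{LemmaHalfParallel}, the hypothesis that $\alpha_1(u-\tfrac12)$ is discretely parallel to $\beta(v)$ is exactly the sign change $\sigma_{-+}\neq\sigma_{--}$ across the edge $(u-\tfrac12,v)$; statement (1) is $\sigma_{++}\neq\sigma_{+-}$ (edge $(u+\tfrac12,v)$); statement (2) is $\sigma_{++}\neq\sigma_{-+}$ (edge $(u,v+\tfrac12)$); and statement (3) is $\sigma_{+-}\neq\sigma_{--}$ (edge $(u,v-\tfrac12)$). Thus all four items are sign changes across the four edges meeting at $(u,v)$.

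First I would settle the purely combinatorial part. Reading the four signs cyclically around $(u,v)$, the number of sign changes is even, so under $\sigma_{-+}\neq\sigma_{--}$ it is either two or four. Normalizing $\sigma_{-+}=+,\sigma_{--}=-$ and running a short case check on the remaining signs $\sigma_{++},\sigma_{+-}$ shows that exactly one of (1), (2), (3) holds in every case \emph{except} the alternating pattern
\[
\sigma_{++}=\sigma_{--}\neq\sigma_{-+}=\sigma_{+-},
\]
in which all four edges are singular and all three statements hold simultaneously. Hence the proposition is equivalent to excluding this alternating configuration. This is the step I expect to be the main obstacle, for it is precisely here that the convexity restrictions must be used: without them the pattern is realizable (one can exhibit turning angles $\delta_\alpha,\delta_\beta\in(0,\pi)$ producing it).

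The key lemma I would establish is that the convexity restrictions place both $\alpha$-edges on one side of the chord and both $\beta$-edges on one side. Let $c=\beta(v)-\alpha(u)$ be the chord vector. Taking the origin at $\beta(v)$, the second restriction says the rays from $\beta(v)$ to $\alpha(u-1),\alpha(u),\alpha(u+1)$ are in strict convex position with $\alpha(u)$ in the middle; writing $p,q,r$ for these position vectors one has $q=-c$, $a_-=q-p$, $a_+=r-q$, and a short computation gives $[c,a_-]=[q,p]$ and $[c,a_+]=-[q,r]$, which have the same sign because $[p,q]$ and $[q,r]$ do (convex position). Thus $\operatorname{sign}[c,a_-]=\operatorname{sign}[c,a_+]$. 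The symmetric argument with the origin at $\alpha(u)$, using the first restriction, gives $\operatorname{sign}[c,b_-]=\operatorname{sign}[c,b_+]$.

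Finally I would derive the contradiction. Applying a linear isomorphism carrying $c$ to $(1,0)$ (which multiplies every bracket by a fixed nonzero factor and so preserves all sign relations) and, if needed, a reflection, I may assume $a_-,a_+$ have directional angles in $(0,\pi)$; the lemma then forces $b_-,b_+$ to lie either both in $(0,\pi)$ or both in $(\pi,2\pi)$. In either case each $\sigma_{\varepsilon\delta}=\operatorname{sign}[a_\varepsilon,b_\delta]$ reduces to a comparison of the corresponding angles (up to the common shift by $\pi$ in the opposite half-plane), and the alternating pattern $\sigma_{++}=\sigma_{--}\neq\sigma_{+-}=\sigma_{-+}$ forces simultaneously $\theta(a_+)<\theta(a_-)$ and $\theta(a_-)<\theta(a_+)$, a contradiction. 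This rules out the alternating configuration, and together with the combinatorial step it yields that exactly one of (1), (2), (3) holds, completing the proof.
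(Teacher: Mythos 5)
Your proof is correct, and it takes a genuinely different route from the paper's. The paper argues by direct geometric case analysis: fixing $\alpha(u-1)$, $\alpha(u)$, $\beta(v-1)$, $\beta(v)$, $\beta(v+1)$, it draws the two lines through $\alpha(u)$ parallel to $\beta_2(v-\tfrac{1}{2})$ and $\beta_2(v+\tfrac{1}{2})$, notes that these cut the plane into four regions, invokes the restriction on $(\alpha,\beta)$ to forbid $\alpha(u+1)$ from lying in the region containing $\alpha(u-1)$ (and genericity to keep it off the two lines), and checks that each of the three remaining regions realizes exactly one of the alternatives (1)--(3). You instead translate everything into the signs of $\Omega$ on the four quadrangles around $(u,v)$, observe by cyclic parity that exactly one alternative holds unless all three do (the alternating sign pattern), and exclude the alternating pattern by a lemma extracted from the restrictions --- that $[c,a_-]$ and $[c,a_+]$ have equal sign, and $[c,b_-]$ and $[c,b_+]$ have equal sign, where $c$ is the chord vector --- followed by an angle-ordering contradiction. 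Your computations ($[c,a_-]=[q,p]$, $[c,a_+]=-[q,r]$, the half-plane normalization, and the final chain of angle inequalities) check out, granted the paper's standing assumption that $\Omega\neq 0$ on every quadrangle, which is what makes all the signs well defined. As for what each approach buys: the paper's proof is shorter and records which alternative occurs according to the position of $\alpha(u+1)$, information that is reused later in the proposition on star configurations; your proof isolates exactly where the hypothesis on $(\alpha,\beta)$ is indispensable (only to kill the alternating pattern, which, as you rightly note, is realizable without that hypothesis), turns the ``only one'' half of the claim into an automatic parity fact rather than a region-by-region verification, and is arguably more self-contained, since the paper's assertion that $\alpha(u+1)$ cannot lie in the region of $\alpha(u-1)$ is stated without a detailed derivation from the restriction.
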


\noindent\emph{Proof.}
Let us fix $\alpha(u-1)$, $\alpha(u)$, $\beta(v-1)$, $\beta(v)$ and $\beta(v+1)$ so that the hypothesis continues valid and then observe what can happen to the point $\alpha(u+1)$ (see Figure \ref{Fig-sing-proposition-c-possib}).

\begin{figure}[!htb]
\centering
\includegraphics[width=.4\linewidth]{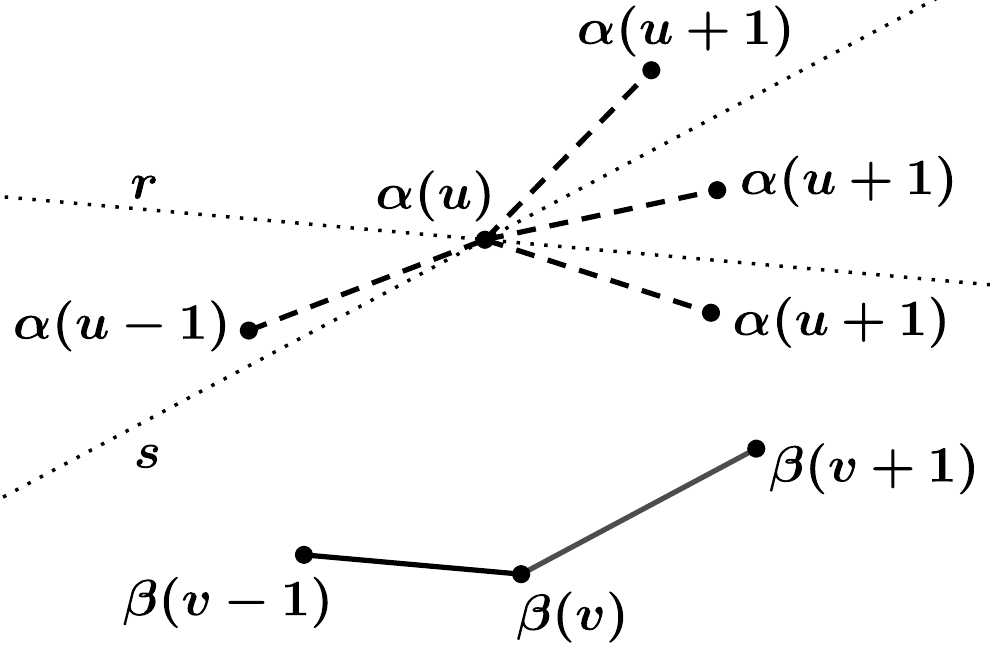}
\caption{\small The three possible configurations for the vertex $\alpha(u+1)$.}\label{Fig-sing-proposition-c-possib}
\end{figure}

Let $r$ and $s$ be straight lines passing through $\alpha(u)$ and parallel to the edges $\beta_2(v-\tfrac{1}{2})$ and $\beta_2(v+\tfrac{1}{2})$, respectively. They divide the plane in four open regions and the point $\alpha(u+1)$ can be only in three of them. In fact, by the restriction made to the curves $\alpha$ and $\beta$ at the beginning of the section, it can not be in the same region wherein the point $\alpha(u-1)$ is. Also, since there is no parallelism between edges of $\alpha$ and $\beta$, $\alpha(u+1)$ can neither be in the straight line $r$ nor in $s$.

If $\alpha(u+1)$ belongs to the region opposed to the region where $\alpha(u-1)$ belongs, then $\alpha_1(u+\tfrac{1}{2})$ is discretely parallel to $\beta(v)$. Thus $x_1(u+\tfrac{1}{2},v)$ belongs to the DMPTL, while $x_2(u,v\pm\tfrac{1}{2})$ does not and this takes us to case (1).

\begin{figure}[!htb]
\includegraphics[width=.32\linewidth]{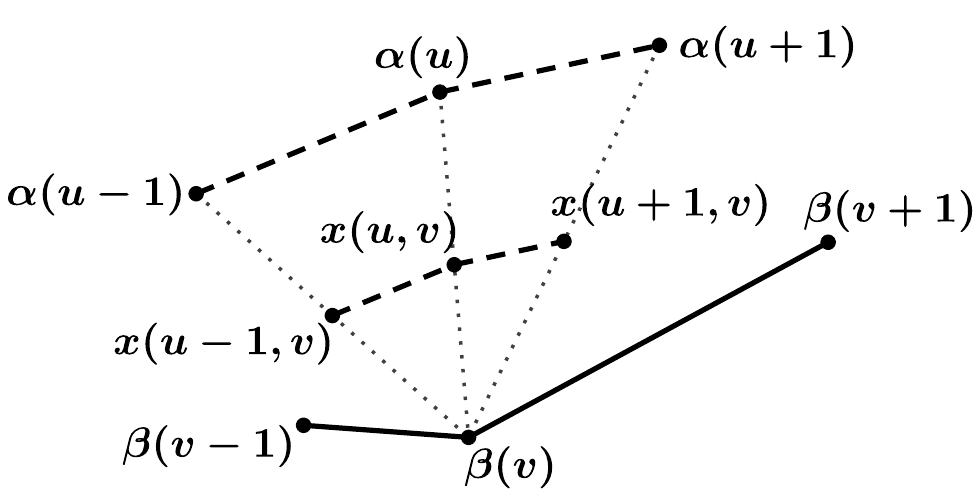}
\hfill
\includegraphics[width=.32\linewidth]{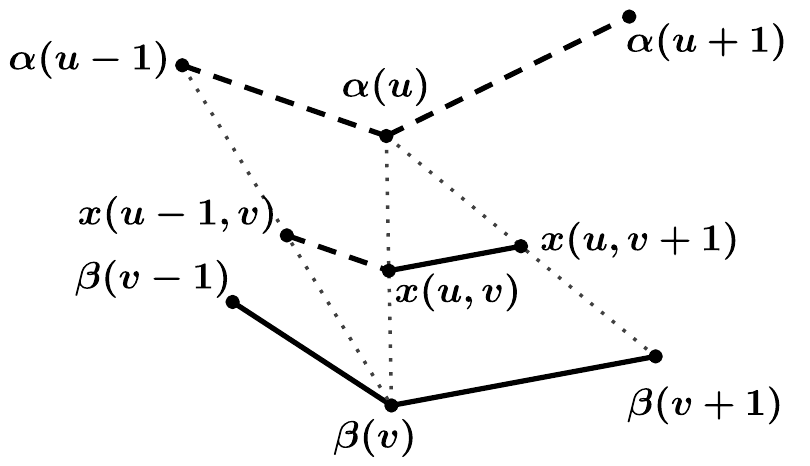}
\hfill
\includegraphics[width=.32\linewidth]{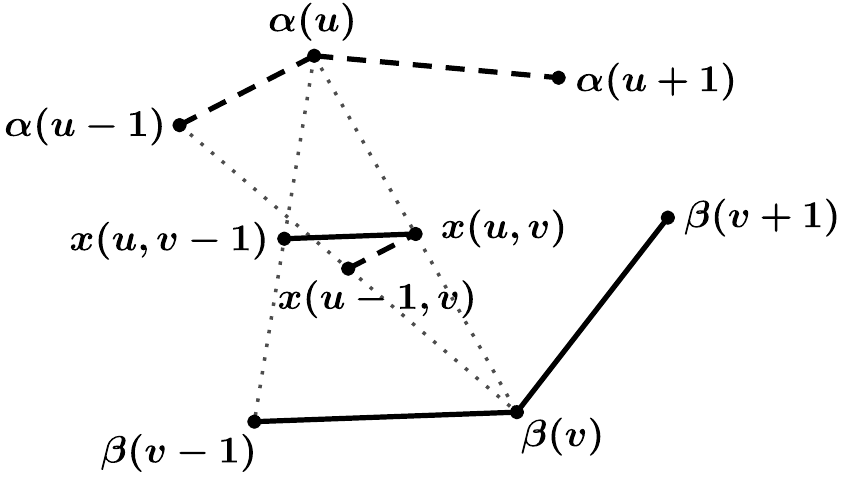}
\caption{\small Three possibilities for the construction of the DMPTL.}\label{Fig-dmptl-first}
\end{figure}

If $\alpha(u+1)$ is above $r$ and $s$, then $\beta_2(v+\tfrac{1}{2})$ is discretely parallel to $\alpha(u)$. Thus $x_2(u,v+\tfrac{1}{2})$ belongs to the DMPTL, while $x_1(u+\tfrac{1}{2},v)$ and $x_2(u,v-\tfrac{1}{2})$ does not, and this takes us to case (2).

If $\alpha(u+1)$ is below $r$ and $s$, then $\beta_2(v-\tfrac{1}{2})$ is discretely parallel to $\alpha(u)$. Thus $x_2(u,v-\tfrac{1}{2})$ belongs to the DMPTL, while $x_1(u+\tfrac{1}{2},v)$ and $x_2(u,v+\tfrac{1}{2})$ does not, and this takes us to case (3). \hfill$\square$

\begin{cor}
The DMPTL is a planar polygonal line. As a consequence, the set of singular edges of a DIIAS form a spatial polygonal line.
\end{cor}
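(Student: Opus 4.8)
The plan is to read the DMPTL as a subgraph of the $x$-net and to use Proposition \ref{Prop-dmptl-construction} as a ``unique continuation'' statement that forces every vertex to have valence $0$ or $2$. First I would record the combinatorial content of the midsegment description: when $\alpha_1(u+\tfrac12)$ is discretely parallel to $\beta(v)$, the associated midsegment joins $x(u,v)=\tfrac12(\alpha(u)+\beta(v))$ to $x(u+1,v)=\tfrac12(\alpha(u+1)+\beta(v))$, hence it is exactly the edge $x_1(u+\tfrac12,v)$ of the $x$-net; symmetrically, when $\beta_2(v+\tfrac12)$ is discretely parallel to $\alpha(u)$ the midsegment is the edge $x_2(u,v+\tfrac12)$. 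Thus the DMPTL is precisely the set of edges of the $x$-net carrying a discrete parallelism, and its candidate vertices are the lattice points $x(u,v)$.

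Next I would fix a vertex $x(u,v)$ and count how many of its four incident $x$-net edges belong to the DMPTL. The four incidence conditions are $\alpha_1(u\mp\tfrac12)$ discretely parallel to $\beta(v)$ (for the two $u$-edges) and $\beta_2(v\mp\tfrac12)$ discretely parallel to $\alpha(u)$ (for the two $v$-edges). If one of the $u$-edges, say $x_1(u-\tfrac12,v)$, lies in the DMPTL, then $\alpha_1(u-\tfrac12)$ is discretely parallel to $\beta(v)$ and Proposition \ref{Prop-dmptl-construction} applies verbatim: exactly one of the remaining three incidence conditions holds, so exactly two of the four edges are DMPTL edges. The case in which a $v$-edge is present is handled by the same proposition after interchanging the roles of $(\alpha,u)$ and $(\beta,v)$, a symmetry of the whole centre-chord construction; it again yields exactly one continuation. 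Consequently every vertex of the $x$-net has DMPTL-valence $0$ or $2$ (valence $1$ occurring only when the continuation would require a lattice point outside $I\times J$).

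A graph in which every vertex has valence at most two, and all interior vertices have valence exactly two, is a disjoint union of simple paths and cycles; so the DMPTL is a planar polygonal line, proving the first assertion. For the second, recall that a singular edge of the DIIAS is, by Corollary \ref{HalfStarPlane}, exactly an edge of the $q$-net whose vertical projection is an edge of the DMPTL. Since the projection $q(u,v)\mapsto x(u,v)$ is a bijection on the vertices and on the edges of the net, it carries the singular edges of the $q$-net onto the DMPTL bijectively and preserves all incidences. The polygonal-line structure therefore lifts to the $q$-net, and the singular edges of the DIIAS form a spatial polygonal line, namely the lift of the DMPTL under the section $x\mapsto q$.

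I expect the only genuine obstacle to be the orientation issue just noted: Proposition \ref{Prop-dmptl-construction} is stated for an incoming $u$-edge only, so the continuation after a $v$-edge is not literally covered and must be obtained by invoking the $(\alpha,u)\leftrightarrow(\beta,v)$ symmetry of the construction. Once that symmetric version is in hand, the valence bookkeeping is purely formal.
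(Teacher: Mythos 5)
Your proposal is correct and is essentially the paper's own argument: the paper derives this corollary directly from Proposition \ref{Prop-dmptl-construction}, read as a unique-continuation statement for building the DMPTL ``step by step'', which is exactly your valence-$0$-or-$2$ bookkeeping, and the lift to the $q$-net via the projection of singular edges onto DMPTL edges is the same. Your explicit graph-theoretic packaging and your handling of the $(\alpha,u)\leftrightarrow(\beta,v)$ symmetry (needed when the incoming edge is a $v$-edge) only make precise what the paper leaves implicit.
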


\subsection{Configuration of a star}

Let us make some notes about possible configurations for star planes in the asymptotic net or in the planar net.
A star plane at $q(u,v)$ is called \emph{typical} if the four points $q(u+1,v)$, $q(u,v+1)$, $q(u-1,v)$ and $q(u,v-1)$ appear in this order, clockwise or counter clockwise, with respect to $q(u,v)$, and \emph{atypical} otherwise (see Figure \ref{Fig-typical-at-star}).

We can consider similarly typical and atypical vertices in the planar net. It is clear that $q(u,v)$ is typical for the asymptotic net if and only if $x(u,v)$ is typical for the planar net.

\begin{figure}[!htb]
\begin{minipage}[b]{0.24\linewidth}
\centering
\includegraphics[width=1\linewidth]{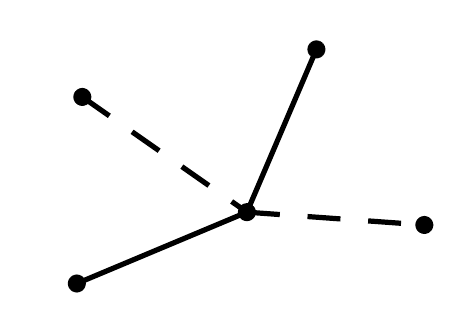}
\end{minipage}\hfill
\begin{minipage}[b]{0.23\linewidth}
\centering
\includegraphics[width=1\linewidth]{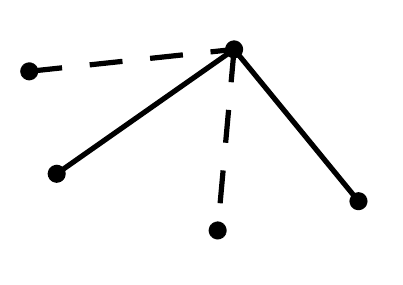}
\end{minipage}\hfill
\begin{minipage}[b]{0.24\linewidth}
\centering
\includegraphics[width=1\linewidth]{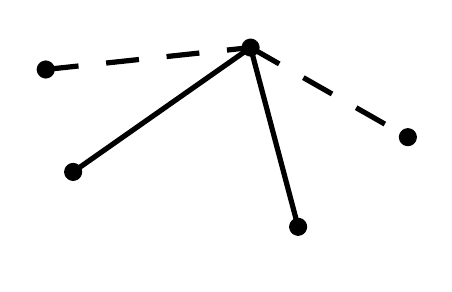}
\end{minipage}\hfill
\begin{minipage}[b]{0.23\linewidth}
\centering
\includegraphics[width=1\linewidth]{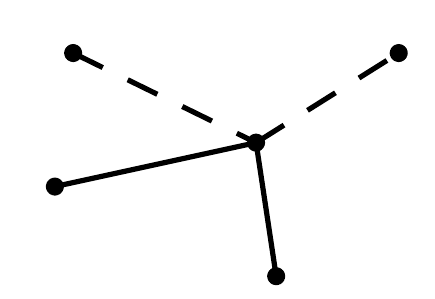}
\end{minipage}
\caption{\small Both figures on the left show two different possibilities of a typical star, whilst both on the right show two possible configurations for an atypical one. The line styles indicate the $u$ and $v$ directions.}\label{Fig-typical-at-star}
\end{figure}

We have the following proposition:
\begin{prop}
Consider a vertex $x(u,v)$ of the planar net. Then one and only one of the following conditions holds:
\begin{enumerate}
\item[$(0)$]
No edge in the star is in the DMPTL.
\item[$(1)$]
Two consecutive edges with the same label are in the DMPTL.
\item[$(2)$]
Two adjacent edges with different labels are in the DMPTL and the star is typical.
\item[$(3)$]
Two adjacent edges with different labels are in the DMPTL and the star is atypical.
\end{enumerate}
\end{prop}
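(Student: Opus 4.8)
The plan is to translate the entire statement into the sign behavior of $\Omega$ on the four quadrangles surrounding the vertex $(u,v)$, and then to lean on Proposition \ref{Prop-dmptl-construction}. First I would set up the dictionary between singular edges of the star and sign changes of $\Omega$. List the four quadrangles meeting at $(u,v)$ cyclically as $(u+\tfrac12,v+\tfrac12)$, $(u-\tfrac12,v+\tfrac12)$, $(u-\tfrac12,v-\tfrac12)$, $(u+\tfrac12,v-\tfrac12)$ and record the sign of $\Omega$ on each. By the definition of a singular edge, each of the four edges of the star is singular exactly when the two quadrangles it separates carry opposite signs; moreover the two label-$1$ edges $x_1(u\pm\tfrac12,v)$ each separate an upper from a lower quadrangle, while the two label-$2$ edges $x_2(u,v\pm\tfrac12)$ each separate a left from a right quadrangle. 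Consequently the number of singular edges of the star equals the number of sign changes of $\Omega$ around this cyclic sequence of four values, which is necessarily even, namely $0$, $2$, or $4$. This already excludes an odd number of singular edges, and by the equivalence ``singular edge $\Leftrightarrow$ edge in the DMPTL'' it shows that the DMPTL-edges of a star come in an even number.

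The crux is to exclude the value $4$, and here I would invoke Proposition \ref{Prop-dmptl-construction}. If no edge of the star lies in the DMPTL we are in case $(0)$. Otherwise at least one edge, say $x_1(u-\tfrac12,v)$, lies in the DMPTL; Proposition \ref{Prop-dmptl-construction} then asserts that exactly one of the remaining three edges lies in the DMPTL, so precisely two edges of the star are singular and $4$ is impossible. Since the centre-chord construction is symmetric under $u\mapsto -u$, $v\mapsto -v$ and under interchanging the roles of $\alpha$ and $\beta$ with $u\leftrightarrow v$, the same conclusion holds whichever of the four edges one starts from; hence the number of singular edges of any star is always $0$ or $2$. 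It is exactly at this step that the convexity restriction imposed on the pair $(\alpha,\beta)$ at the beginning of the section is used, since that restriction is built into the proof of Proposition \ref{Prop-dmptl-construction}.

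It then remains to organize the case of exactly two singular edges. The two singular edges either carry the same label, both of type $1$ or both of type $2$, which is case $(1)$, or they carry different labels, one of each type; in the latter situation the star is either typical or atypical by definition, yielding cases $(2)$ and $(3)$ respectively. These four alternatives are mutually exclusive, being separated first by the singular-edge count ($0$ versus $2$), then by the sameness of the labels, and finally by the typical/atypical dichotomy, and they are exhaustive precisely because the count is forced to be $0$ or $2$. Thus one and only one of $(0)$--$(3)$ holds. I expect the only genuine obstacle to be the exclusion of four singular edges, that is, the appeal to Proposition \ref{Prop-dmptl-construction} and through it to the curve restriction; once the count is pinned at $0$ or $2$, the remaining bookkeeping of labels and of the star type is purely combinatorial and requires no further computation.
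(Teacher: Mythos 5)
Your proof is correct and follows essentially the same route as the paper's: after disposing of case $(0)$, you reduce by symmetry to the case where $x_1(u-\tfrac{1}{2},v)$ lies in the DMPTL and invoke the trichotomy of Proposition \ref{Prop-dmptl-construction}, which is exactly how the paper pins the singular-edge count at two and distributes the outcomes among cases $(1)$--$(3)$. Your preliminary parity argument (counting sign changes of $\Omega$ over the four quadrangles, using the standing assumption $\Omega\neq 0$) is valid but redundant, since the trichotomy alone already forces the count to be $0$ or $2$.
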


\begin{proof}
If no edges of the star at $x(u,v)$ is in the DMPTL, we are in case (0). If at least one is in the DMPTL, we may assume it is $x_1(u-\tfrac{1}{2},v)$. Then proceeding as in the proof of Proposition \ref{Prop-dmptl-construction}, there are three possibilities, cases (1), (2) or (3).
In case (1), two consecutive edges with the same label are in the DMPTL. In case (2), two adjacent edges with different labels are in the DMPTL and the star is typical. Finally in case (3), two adjacent edges with different labels are in the DMPTL and the star is atypical.
\end{proof}

\begin{cor}\label{Configs}
Consider a vertex $q(u,v)$ of the asymptotic net. Then one and only one of the following conditions holds:
\begin{enumerate}
\item[$(0)$]
No edge in the star is singular.
\item[$(1)$]
Two consecutive edges with the same label are singular.
\item[$(2)$]
Two adjacent edges with different labels are singular and the star is typical.
\item[$(3)$]
Two adjacent edges with different labels are singular and the star is atypical.
\end{enumerate}
\end{cor}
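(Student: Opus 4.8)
The plan is to obtain Corollary~\ref{Configs} as an immediate translation, through the vertical projection $q\mapsto x$, of the preceding Proposition classifying the star of a vertex $x(u,v)$ in the planar net. The entire content is that every structural ingredient of the four-case classification transfers faithfully from the $x$-net to the asymptotic net, so no new geometry is needed.

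First I would assemble the two ``dictionary entries'' already established in the excerpt. On one side, by the discussion immediately following Corollary~\ref{HalfStarPlane}, an edge of the DIIAS is singular precisely when its projection is an edge of the DMPTL; concretely, $q_1(u\pm\tfrac{1}{2},v)$ is singular if and only if $x_1(u\pm\tfrac{1}{2},v)$ lies in the DMPTL, and likewise for the two $q_2$-edges of the star. On the other side, the projection $q\mapsto x$ is label-preserving: a spatial edge in the $u$-direction projects to a planar $u$-edge and a spatial $v$-edge to a planar $v$-edge, so the phrases ``same label'' and ``different labels'' carry the identical meaning on both sides of the projection, and the cyclic order of the four star-edges is preserved.

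Next I would invoke the equivalence recorded in the subsection on the configuration of a star, namely that the star at $q(u,v)$ is typical if and only if the star at $x(u,v)$ is typical. Combining this with the two observations above, each of the four mutually exclusive and jointly exhaustive alternatives of the Proposition for $x(u,v)$ -- (0) no star-edge in the DMPTL, (1) two consecutive same-label edges in the DMPTL, (2) two different-label edges in the DMPTL with $x(u,v)$ typical, and (3) two different-label edges in the DMPTL with $x(u,v)$ atypical -- maps term by term to the corresponding alternative for $q(u,v)$, with ``in the DMPTL'' replaced by ``singular'' and the typical/atypical dichotomy unchanged.

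Since the Proposition already guarantees that exactly one of its four cases holds for $x(u,v)$, this bijective, label- and typicality-preserving correspondence forces exactly one of the four cases of the Corollary to hold for $q(u,v)$, which is the assertion. I do not expect any computational obstacle here; the single point that merits explicit verification is that the projection genuinely preserves both the edge labels and the cyclic order of the star (hence typicality), so that the Proposition is being applied to the very same combinatorial star on both sides of the projection.
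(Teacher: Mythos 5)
Your proposal is correct and takes essentially the same route as the paper: the paper states this result as an immediate corollary of the preceding proposition on the planar net, relying on exactly the two facts you cite, namely that an edge of the DIIAS is singular if and only if its projection is an edge of the DMPTL, and that $q(u,v)$ is typical if and only if $x(u,v)$ is typical. Your write-up merely makes explicit the label- and typicality-preserving transfer that the paper leaves implicit.
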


Figures \ref{Fig-cuspidaledge-config-uu} and \ref{Fig:DiscreteCuspidalEdgesInterpolator} show a neighborhood of $q(u,v)$ satisfying conditions (1) and (2) of Corollary \ref{Configs}.

\begin{figure}[!htb]
 \includegraphics[width=.4\linewidth]{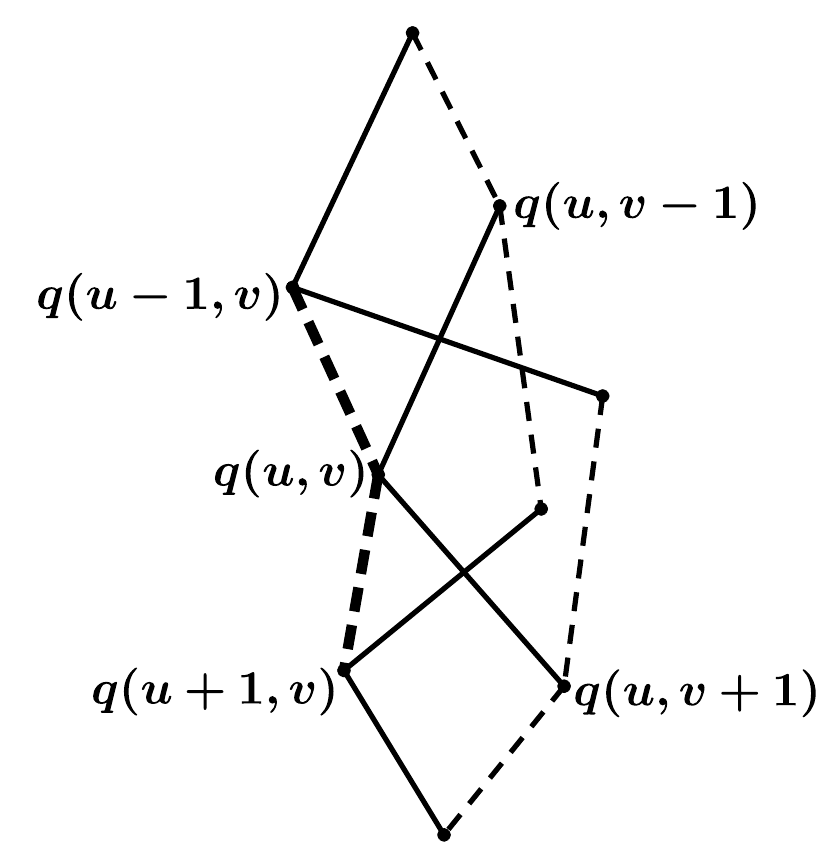}
 \hspace{1cm}
  \includegraphics[width=.4\linewidth]{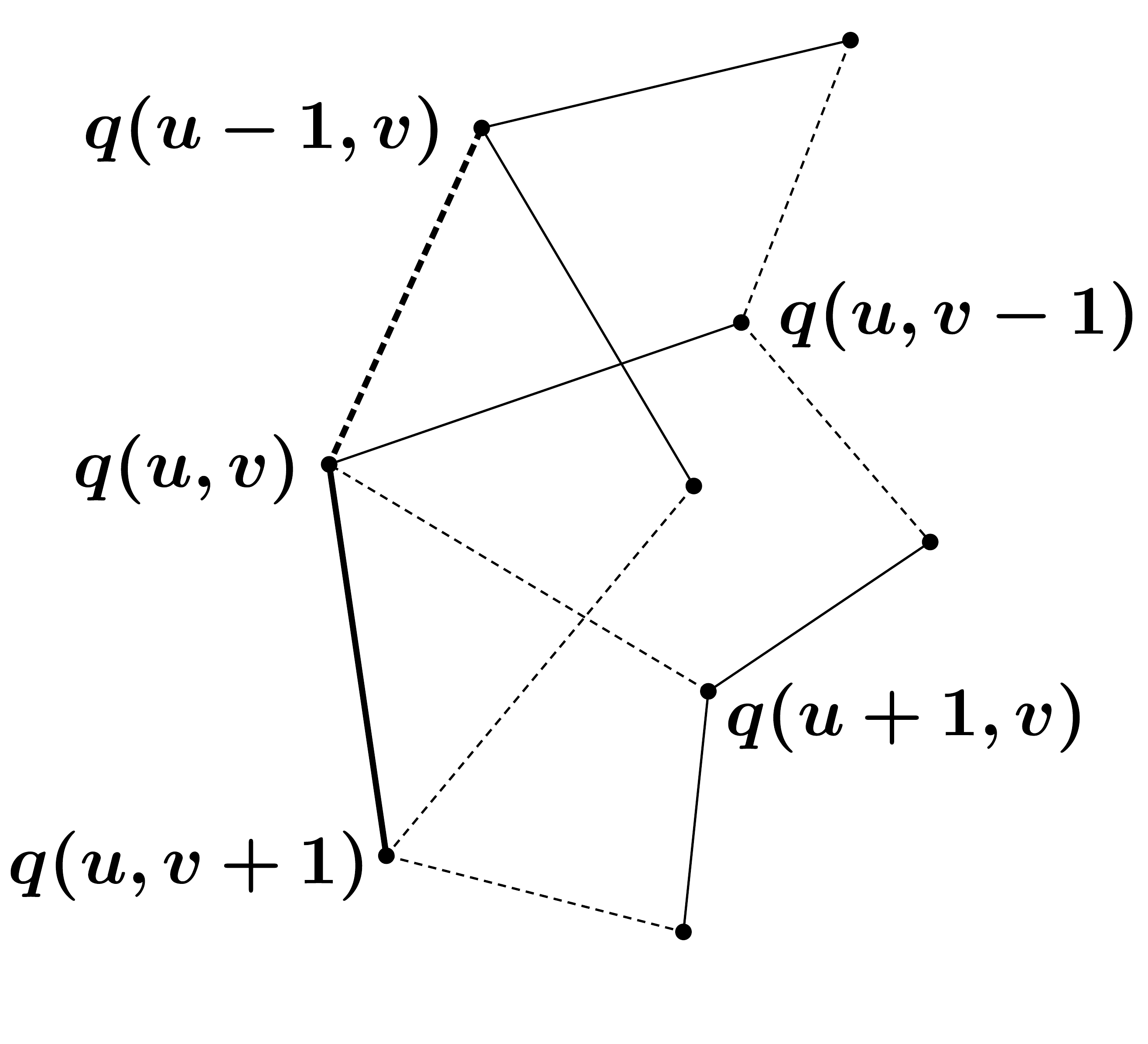}
\caption{\small A pair of cuspidal edges satisfying conditions (1) and (2) of Corollary \ref{Configs}, respectively.}\label{Fig-cuspidaledge-config-uu}
\end{figure}

\begin{figure}[!htb]
 \includegraphics[width=.28\linewidth]{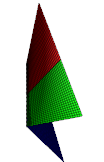}
 \hspace{2cm}
 \includegraphics[width=.4\linewidth]{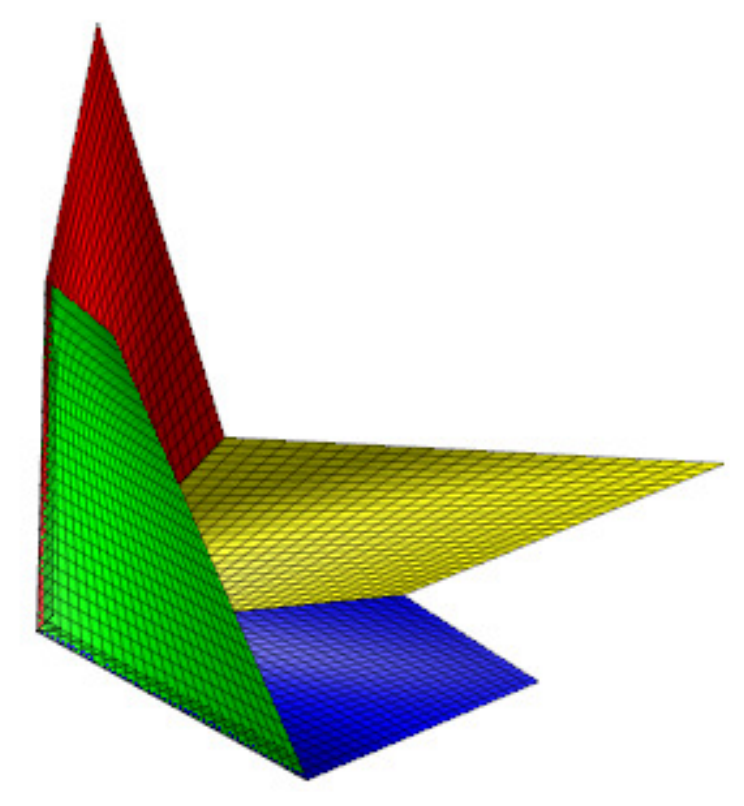}
\caption{\small Same cases as in Figure \ref{Fig-cuspidaledge-config-uu} with bilinear patches to help visualization.}\label{Fig:DiscreteCuspidalEdgesInterpolator}
\end{figure}

\subsection{Swallowtail vertices of the $q$-net}

The following proposition is a straightforward corollary of Proposition \ref{Prop-dmptl-construction}.

\begin{prop}\label{CuspMPTL}
Consider a vertex $x(u,v)$ of the DMPTL. The following conditions are equivalent:

\begin{enumerate}
\item
Two adjacent edges of the $x$-net with different labels are singular and the star is atypical.
\item
The two adjacent vertices of the DMPTL are in the same half-plane determined by the line supporting the chord $\alpha(u)\beta(v)$.
\end{enumerate}
\end{prop}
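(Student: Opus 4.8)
The plan is to read both conditions off the pair of midsegments of the DMPTL that meet at the vertex $x(u,v)$, reusing the configuration from the proof of Proposition~\ref{Prop-dmptl-construction}, and to compare them by means of the convexity restriction imposed on $(\alpha,\beta)$ at the beginning of the section. Put the supporting line of the chord $\alpha(u)\beta(v)$ on a horizontal axis; its midpoint is $x(u,v)$, and ``same half-plane'' means equal sign of the vertical coordinate. I begin by recording an elementary observation, the \emph{apex lemma}: every edge of the DMPTL issuing from $x(u,v)$ is a midsegment of a triangle one of whose sides is the chord $\alpha(u)\beta(v)$, namely $x_1(u\pm\tfrac12,v)$ is the midsegment of $\alpha(u\pm1)\,\alpha(u)\,\beta(v)$ and $x_2(u,v\pm\tfrac12)$ is the midsegment of $\alpha(u)\,\beta(v\pm1)\,\beta(v)$. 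Since the displacement of the far endpoint from $x(u,v)$ equals $\tfrac12(\alpha(u\pm1)-\alpha(u))$, respectively $\tfrac12(\beta(v\pm1)-\beta(v))$, the adjacent DMPTL vertex $x(u\pm1,v)$ lies strictly on the same side of the chord line as the apex $\alpha(u\pm1)$, and $x(u,v\pm1)$ on the same side as $\beta(v\pm1)$.

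Next I use convexity to translate condition (2). Because $\alpha(u)$ lies in the interior of the angle $\alpha(u-1)\beta(v)\alpha(u+1)<180^\circ$ and the ray $\beta(v)\alpha(u)$ runs along the chord, the points $\alpha(u-1)$ and $\alpha(u+1)$ fall on opposite sides of the chord line; symmetrically $\beta(v-1)$ and $\beta(v+1)$ fall on opposite sides. By Corollary~\ref{Configs}, in the situation of item (1) the two singular edges have different labels, so by the apex lemma the two adjacent DMPTL vertices sit on the sides of one of $\alpha(u\pm1)$ and one of $\beta(v\pm1)$. Fixing the $u$-edge fixes the $\alpha$-apex and hence its side; since $\beta(v-1),\beta(v+1)$ lie on opposite sides, condition (2) holds for exactly one of the two possible $v$-edges. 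In other words, of the two admissible different-label continuations of the DMPTL at $x(u,v)$ furnished by Proposition~\ref{Prop-dmptl-construction}, exactly one is a cusp; and by the same token exactly one of them is atypical. The whole equivalence thus reduces to showing that these two distinguished continuations coincide.

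It remains to identify the cusp-producing continuation with the atypical star, and this is the step I expect to cost the most. I would argue it in chord coordinates. By convexity the vertical coordinates of $\alpha_1(u+\tfrac12)$ and $\alpha_1(u-\tfrac12)$ share a common sign, and likewise those of $\beta_2(v+\tfrac12)$ and $\beta_2(v-\tfrac12)$; hence the two $u$-directions $\alpha_1(u+\tfrac12),-\alpha_1(u-\tfrac12)$ of the star lie in opposite open half-planes about the chord, and so do the two $v$-directions $\beta_2(v+\tfrac12),-\beta_2(v-\tfrac12)$. Thus each half-plane contains exactly one $u$-direction and one $v$-direction, and the star is typical precisely when the two half-planes exhibit these in the same rotational order. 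Writing the singular conditions as the sign requirements on the four determinants $[\alpha_1(u\pm\tfrac12),\beta_2(v\pm\tfrac12)]$ that define the metric $\Omega$ (as in Lemma~\ref{LemmaHalfParallel}), there remain only finitely many admissible sign patterns with two adjacent different-label edges singular; a direct inspection of each shows that the four neighbours fail to alternate in label exactly when the selected $\alpha$- and $\beta$-apexes lie on the same side of the chord. Combined with the previous paragraph this yields (1)$\Leftrightarrow$(2).

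The delicate point throughout is orientation bookkeeping: one must make the rotational order within the two half-planes and the half-plane signs consistent with the labelling of cases in Proposition~\ref{Prop-dmptl-construction}, and it is easy to invert ``typical'' and ``atypical'' by a stray sign. Before trusting the general determinant computation I would pin the correspondence down on a worked configuration, such as those depicted in Figures~\ref{Fig-dmptl-first} and~\ref{Fig-typical-at-star}, and only then state the sign analysis. Everything else — the apex lemma and the two opposite-sides statements from convexity — is routine and direct, which is consistent with the claim that the proposition is a straightforward corollary of Proposition~\ref{Prop-dmptl-construction}.
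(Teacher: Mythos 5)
Your skeleton is sound, and it is genuinely different from the paper's treatment: the paper offers no computation at all, simply reading the equivalence off the trichotomy of Proposition~\ref{Prop-dmptl-construction} (whose three cases, via the star-configuration proposition preceding Corollary~\ref{Configs}, are exactly ``same label'', ``different labels, typical'' and ``different labels, atypical''), with the cusp condition visible in Figure~\ref{Fig-dmptl-first}. Your apex lemma is correct ($x(u\pm1,v)$ lies strictly on the same side of the chord line as $\alpha(u\pm1)$, and $x(u,v\pm1)$ on the same side as $\beta(v\pm1)$), and so is the opposite-sides consequence of the convexity restriction; together they correctly reduce the proposition to a single claim: when the two singular edges have different labels, the star is atypical if and only if the two apexes selected by those edges lie on the same side of the chord. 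The problem is that this remaining claim \emph{is} the content of the proposition, and your write-up does not prove it: you state the right criterion (typical $\Leftrightarrow$ the $u$- and $v$-directions occur in the same rotational order in the two half-planes) and then announce that ``a direct inspection'' of sign patterns finishes the job, explicitly deferring both the inspection and the orientation bookkeeping. As written, that is a genuine gap --- not a wrong idea, but an unexecuted decisive step.

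The gap does close exactly along the lines you indicate. Normalize the chord to the horizontal axis with $\alpha(u-1)$ below it, and write $s_{\pm\pm}$ for the sign of $\Omega(u\pm\tfrac12,v\pm\tfrac12)=\tfrac14[\alpha_1(u\pm\tfrac12),\beta_2(v\pm\tfrac12)]$. If $\beta(v+1)$ lies above the chord, the upper half-plane contains the star directions $\alpha_1(u+\tfrac12)$ and $\beta_2(v+\tfrac12)$ and the lower one contains $-\alpha_1(u-\tfrac12)$ and $-\beta_2(v-\tfrac12)$, and your rotational-order criterion becomes: typical $\Leftrightarrow s_{++}s_{--}>0$; if $\beta(v+1)$ lies below, it becomes: typical $\Leftrightarrow s_{+-}s_{-+}>0$. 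Up to a global sign change, the patterns with two adjacent different-label singular edges are $(s_{++},s_{+-},s_{-+},s_{--})=(+,+,+,-)$, $(+,+,-,+)$, $(+,-,+,+)$, $(-,+,+,+)$, and in each of the eight resulting cases the product criterion fails (atypical) precisely when the two selected apexes are on the same side. For instance, with $\beta(v+1)$ above, the pattern $(+,+,+,-)$ has singular edges $(u-\tfrac12,v)$ and $(u,v-\tfrac12)$, whose apexes $\alpha(u-1)$ and $\beta(v-1)$ are both below, and indeed $s_{++}s_{--}<0$; whereas $(+,+,-,+)$ has singular edges $(u-\tfrac12,v)$ and $(u,v+\tfrac12)$, apexes on opposite sides, and $s_{++}s_{--}>0$. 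The same bookkeeping shows that the patterns $(+,-,-,+)$ and $(-,+,+,-)$ (four singular edges at one vertex) are incompatible with the half-plane constraints, recovering the trichotomy. With this check written out, your argument is complete and constitutes a more verifiable substitute for the paper's appeal to the figures.
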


We say that $x(u,v)$ is a \emph{cusp} of the DMPTL if it satisfies one, and hence both, of the conditions of Proposition \ref{CuspMPTL}. To justify this definition, one should compare condition (2) of this proposition with the condition for a cusp in a smooth MPTL described in Section \ref{sec:CuspsMPTL}.

The following definition is central in the paper:

\begin{defn}
The vertex $q(u,v)$ of the asymptotic net is called a \emph{swallowtail} if two adjacent edges with different labels are singular and the star is atypical.
\end{defn}

From this definition, the next proposition is immediate:

\begin{prop}
A vertex $q(u,v)$ of the $q$-net is a swallowtail if and only if the corresponding vertex $x(u,v)$ of the $x$-net is a cusp of the DMPTL.
\end{prop}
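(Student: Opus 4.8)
The plan is to unwind the two definitions in play and connect them through correspondences already established between the $q$-net and its planar projection, the $x$-net. By the definition immediately preceding the statement, $q(u,v)$ is a swallowtail precisely when two adjacent edges of the $q$-net with different labels are singular and the star at $q(u,v)$ is atypical. On the other side, by the definition of a cusp given just above via Proposition \ref{CuspMPTL}, the vertex $x(u,v)$ is a cusp of the DMPTL precisely when it satisfies condition (1) of that proposition, namely that two adjacent edges of the $x$-net with different labels are in the DMPTL and the star at $x(u,v)$ is atypical. Thus the whole task reduces to matching these two conjunctions term by term.

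First I would invoke the characterization recorded after Corollary \ref{HalfStarPlane}: an edge of the $q$-net is singular if and only if its projection is an edge of the DMPTL. This identifies the ``singular edge'' condition on the $q$-net with the ``edge in the DMPTL'' condition on the $x$-net, and, since projection preserves incidence and labels, it simultaneously matches the phrase ``two adjacent edges with different labels'' on both sides. Second, I would use the observation made in the subsection on the configuration of a star, that $q(u,v)$ is typical for the asymptotic net if and only if $x(u,v)$ is typical for the planar net; equivalently, the star at $q(u,v)$ is atypical exactly when the star at $x(u,v)$ is atypical. With both correspondences in hand, the two conjunctions coincide, and the stated equivalence follows.

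The argument is essentially a faithful translation between the spatial net and its planar shadow, so there is no genuine computational obstacle. The only point requiring care is to confirm that the projection $q\mapsto x$ respects the two combinatorial data being compared: the labelling and adjacency of the edges, and the cyclic order of the four neighbours that defines typicality. Both are built into the setup---the edges of the $x$-net are by construction the projections of the edges of the $q$-net, and typicality is defined identically in both nets---so once these two facts are cited, the equivalence closes immediately, confirming that the proposition is indeed a direct corollary of the preceding definition.
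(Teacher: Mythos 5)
Your proposal is correct and matches the paper's approach: the paper offers no written proof beyond declaring the proposition ``immediate'' from the definition of a swallowtail, and your argument is exactly the unwinding of that immediacy --- matching the two defining conjunctions via the facts that an edge of the $q$-net is singular if and only if its projection is an edge of the DMPTL, and that the star at $q(u,v)$ is atypical if and only if the star at $x(u,v)$ is atypical. Nothing is missing.
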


We observe that the definition of a swallowtail vertex can be extended to any asymptotic net, even if it does not correspond to a DIIAS. In Figures \ref{Theo-swallowtail} and \ref{Fig:DiscreteSwallowtailInterpolator} we can see a swallowtail vertex $q(u,v)$. Observe the visual similarity of the smooth swallowtail and the discrete swallowtail with bilinear interpolators.

\begin{figure}[!htb]
\centering
\includegraphics[width=.9\linewidth]{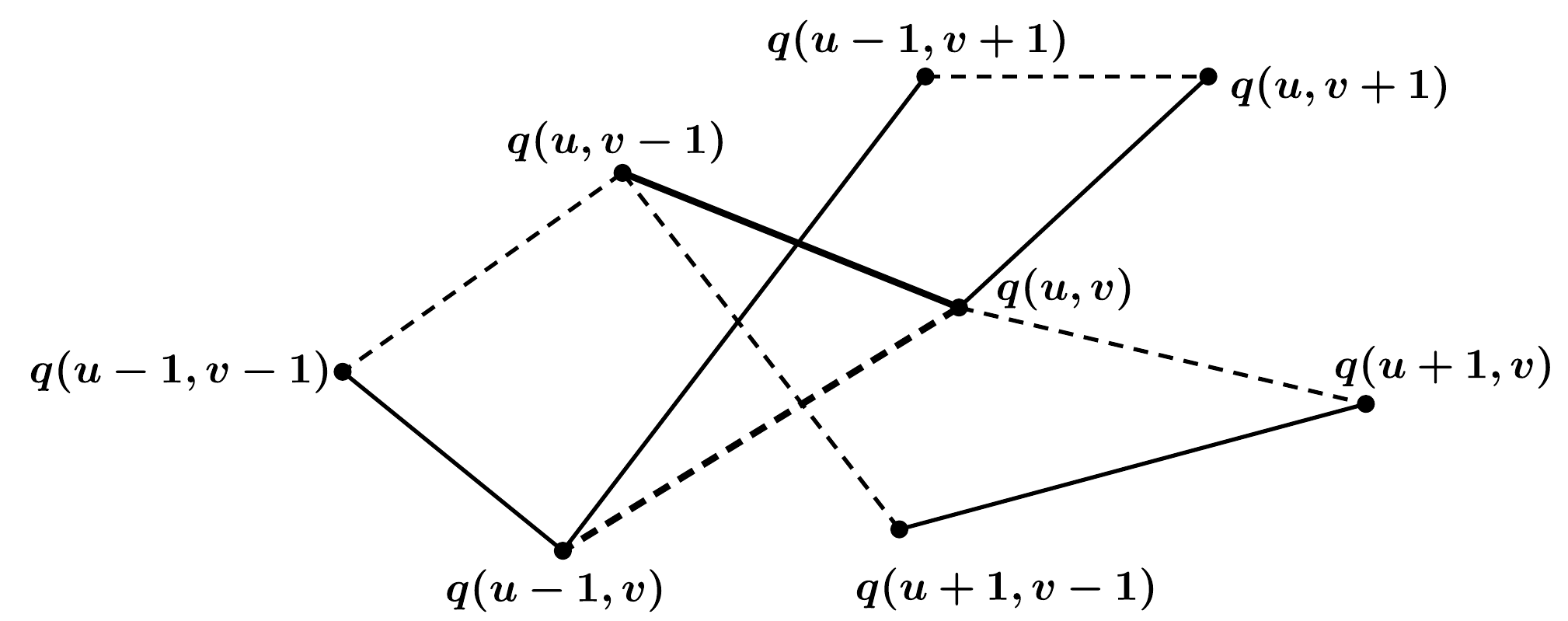}
\caption{\small Swallowtail at $q(u,v)$ with cuspidal edges $q_1(u-\tfrac{1}{2},v)$ (strongest dashed segment) and $q_2(u,v-\tfrac{1}{2})$ (strongest solid segment).}\label{Theo-swallowtail}
\end{figure}

\begin{figure}[!htb]
\centering
 \includegraphics[width=.50\linewidth]{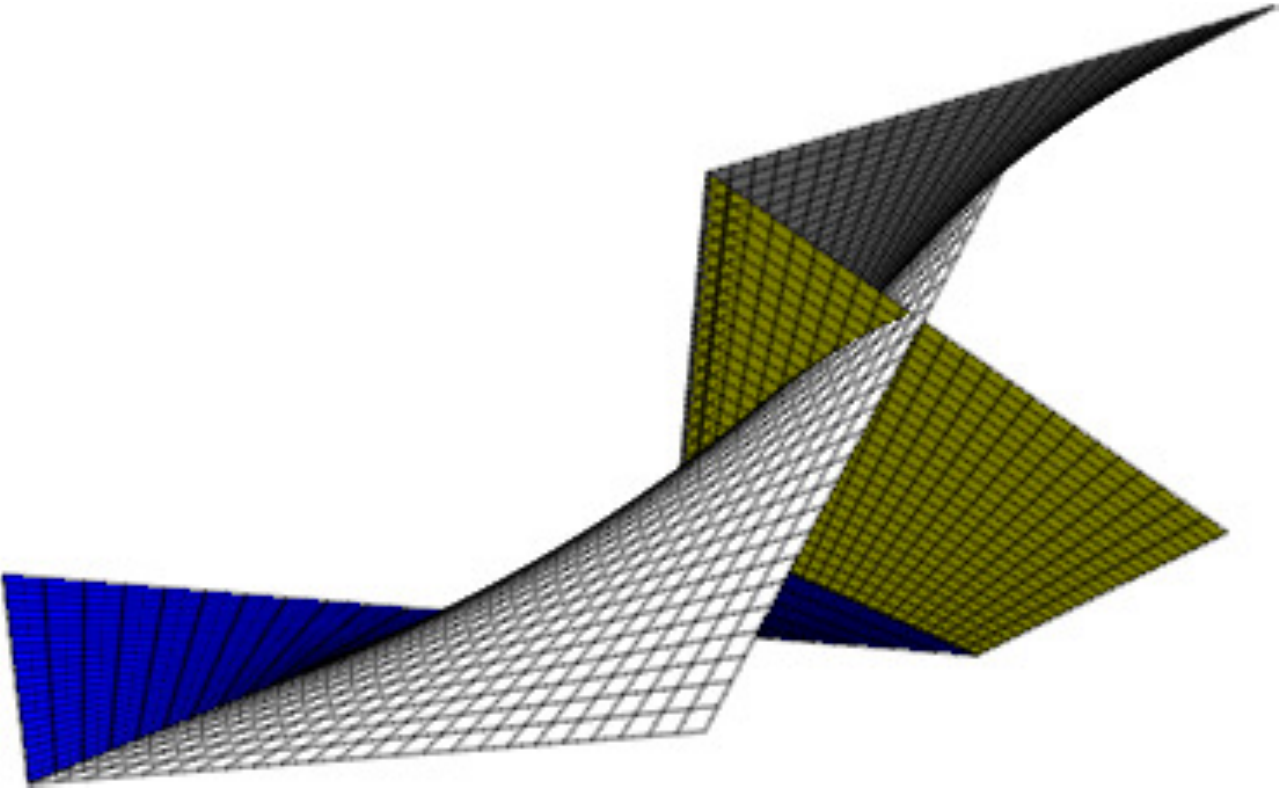}
\caption{\small A discrete swallowtail vertex with four bilinear patches distinguished by colors to help visualization.}\label{Fig:DiscreteSwallowtailInterpolator}
\end{figure}

\subsection{A swallowtail geometrical property}
Let us remember that a swallowtail vertex in the smooth case always implies in self-intersection, so we expect the same behavior in discrete cases too.

\begin{lem}\label{ModelNet}
Consider a DIIAS with $q(0,0)=(0,0,0)$,
$$
q(1,0)=(0,\beta,0),\ q(-1,0)=(\alpha,0,0),\ q(0,1)=(a,b,0),\ q(0,-1)=(c,d,0).
$$
Then
$$
\frac{z(-1,1)}{z(1,1)}=-\frac{\alpha\cdot b}{\beta\cdot a}, \ \ \frac{z(-1,-1)}{z(1,1)}=-\frac{\alpha\cdot d}{\beta\cdot a}, \ \ \frac{z(1,-1)}{z(1,1)}=\frac{c}{a}.
$$
\end{lem}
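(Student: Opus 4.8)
The plan is to combine the centre-chord description of the DIIAS with the fact that $z$ vanishes on the whole star of $q(0,0)$. By Proposition \ref{DIIIAS-centre-chord} I may take $\xi=(0,0,1)$ and write $q(u,v)=(x(u,v),z(u,v))$; the first two coordinates of each listed vertex are then its planar projection $x(u,v)$, and in particular $z=0$ at $q(0,0)$, $q(\pm 1,0)$ and $q(0,\pm 1)$. The structural point I will lean on is $x_{12}=0$: the projected horizontal edges $x_1(u+\tfrac12,v)$ do not depend on $v$ and the projected vertical edges $x_2(u,v+\tfrac12)$ do not depend on $u$. This is exactly what lets the metric $\Omega$ at the four corner quadrangles around $(0,0)$ be computed from the star alone, without knowing the diagonal vertices.

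First I read off the projected edges from the hypotheses, namely $x_1(\tfrac12,0)=(0,\beta)$, $x_1(-\tfrac12,0)=(-\alpha,0)$, $x_2(0,\tfrac12)=(a,b)$ and $x_2(0,-\tfrac12)=(-c,-d)$. Next I use that from $q_{12}=\Omega\,\xi$ one gets $[\,q_1,q_2,q_{12}\,]=\Omega\,[\,q_1,q_2,\xi\,]$, hence $\Omega(u+\tfrac12,v+\tfrac12)=[\,q_1(u+\tfrac12,v),\,q_2(u,v+\tfrac12),\,\xi\,]$, which for $\xi=(0,0,1)$ reduces to the planar determinant $[\,x_1(u+\tfrac12,v),\,x_2(u,v+\tfrac12)\,]$ of the two projected edges of the quadrangle (this is \eqref{eq:MetricCenterChord}). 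By the $v$- and $u$-independence just noted, the edges needed at the quadrangles $(-\tfrac12,\tfrac12)$, $(\tfrac12,-\tfrac12)$ and $(-\tfrac12,-\tfrac12)$ coincide with the star edges already recorded, so I obtain $\Omega(\tfrac12,\tfrac12)=-\beta a$, $\Omega(-\tfrac12,\tfrac12)=-\alpha b$, $\Omega(\tfrac12,-\tfrac12)=\beta c$ and $\Omega(-\tfrac12,-\tfrac12)=\alpha d$.

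Finally I pass from $\Omega$ to the diagonal heights. The third coordinate of $q_{12}=\Omega\,\xi$ gives $z_{12}(u+\tfrac12,v+\tfrac12)=\Omega(u+\tfrac12,v+\tfrac12)$, i.e.\ the alternating sum $z(u+1,v+1)-z(u,v+1)-z(u+1,v)+z(u,v)$ equals $\Omega$. Because $z$ is zero at every star vertex, each such alternating sum collapses to its single diagonal term, up to a sign coming from whether that diagonal vertex sits at a plus- or minus-corner of the quadrangle. This yields $z(1,1)=-\beta a$, $z(-1,1)=\alpha b$, $z(1,-1)=-\beta c$ and $z(-1,-1)=\alpha d$, and dividing by $z(1,1)$ produces the three stated ratios.

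There is no substantive obstacle here; the argument is a bounded computation. The only things to watch are the alternating signs of $z_{12}$ at the four quadrangles (the diagonal vertex switches between the plus- and minus-positions of the alternating sum), and the notational clash between the scalars $\alpha,\beta$ appearing in the statement and the polygonal lines of the centre-chord construction, which I sidestep by phrasing everything through the projected edges $x_1,x_2$ rather than through those polygonal lines.
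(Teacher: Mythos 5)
Your proposal is correct, and all four computed heights $z(1,1)=-\beta a$, $z(-1,1)=\alpha b$, $z(1,-1)=-\beta c$, $z(-1,-1)=\alpha d$ check out against the stated ratios; but it runs on a different engine than the paper's own proof. The paper never touches $\Omega$ or the relation $z_{12}=\Omega$: it computes the planar projections of the diagonal vertices, $x(1,1)=(a,b+\beta)$, $x(-1,1)=(a+\alpha,b)$, $x(-1,-1)=(c+\alpha,d)$, $x(1,-1)=(c,d+\beta)$, from the parallelogram property $x_{12}=0$, and then invokes only the asymptotic-net condition, i.e.\ planarity of the stars at $(1,0)$, $(0,1)$, $(-1,0)$ and $(0,-1)$. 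For instance, the star plane at $(0,1)$ passes through the origin and contains $(a,b,0)$, so it has equation $z=\mu(bx-ay)$; evaluating at the projections of $q(\pm 1,1)$ gives $z(-1,1)/z(1,1)=-\alpha b/(\beta a)$ without ever determining $\mu$, and the other stars give the remaining ratios. You instead use the stronger DIIAS structure equation $q_{12}=\Omega\,\xi$ on the four quadrangles around the origin, together with the vanishing of $z$ on the star, which pins down the actual heights rather than just their ratios. What each buys: the paper's route shows the ratio statement needs nothing beyond star coplanarity once the projections are known (the per-star scale factors cancel), while yours yields explicit values of $z$ and makes the sign bookkeeping mechanical. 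One cosmetic caveat in your write-up: in the singular setting of this section, $\Omega$ is not defined as $\sqrt{M}$ (that square root need not exist, and at a swallowtail $\Omega$ must change sign), so obtaining $\Omega=[q_1,q_2,\xi]$ by dividing $[q_1,q_2,q_{12}]=\Omega\,[q_1,q_2,\xi]$ by $\Omega$ is slightly circular; the clean justification is that the extended DIIAS is by definition a centre-chord net, for which $x_{12}=0$ and $z_{12}=\Omega=\tfrac{1}{4}[\alpha_1,\beta_2]=[x_1,x_2]$ hold directly by Proposition \ref{centre-chord-DIIIAS} and Equation \eqref{eq:MetricCenterChord}. With that adjustment your argument is complete.
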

\begin{proof}
Observe first that
$$
x(1,1)=(a,b+\beta),\ x(-1,1)=(a+\alpha,b),\ x(-1,-1)=(c+\alpha,d),\ x(1,-1)=(c,d+\beta).
$$
Then the planarity of the stars at $(1,0)$, $(0,1)$, $(-1,0)$ and $(0,-1)$ implies the result.
\end{proof}

\begin{prop}
Let $q:I\times J\subset\Z^2\To\R^3$ be a DIIAS. If $q(u,v)$ is a swallowtail vertex, then there is a pair of quadrangles, with $q(u,v)$ as a vertex, whose corresponding bilinear patches intersect each other.
\end{prop}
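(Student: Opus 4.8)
The plan is to compute in the normalized model of Lemma~\ref{ModelNet}. After an affine change of coordinates preserving the DIIAS structure we place the swallowtail vertex at $q(0,0)=(0,0,0)$ with its whole star in the plane $z=0$ and the four neighbours positioned as in that lemma; after relabelling the axes we may assume the two singular edges of different labels are $q_1(-\tfrac12,0)$ and $q_2(0,-\tfrac12)$, so that the quadrangle $(-\tfrac12,-\tfrac12)$ lies between them and $(\tfrac12,\tfrac12)$ is the opposite one. In this model each of the four bilinear patches meeting at $q(0,0)$ has a simple shape: its three vertices adjacent to $q(0,0)$ lie in $z=0$, and because a bilinear patch restricted to an edge is the straight segment joining its endpoints, the patch height is the bilinear function vanishing on the two edges issuing from $q(0,0)$ and equal to the opposite-corner height at the far vertex, namely $z(\pm1,\pm1)\,s\,t$ with $s,t\in[0,1]$. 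Hence each patch lies entirely on the side of $z=0$ fixed by the sign of its far-corner height, meeting $z=0$ only along the two edges through $q(0,0)$.

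Next I would turn the swallowtail hypotheses into signs. Using the characterization of singular edges (Lemma~\ref{LemmaHalfParallel} and Corollary~\ref{HalfStarPlane}), the singularity of $q_1(-\tfrac12,0)$ and of $q_2(0,-\tfrac12)$ become sign conditions on the star coordinates, while the atypicality of the star pins down the remaining sign. Substituting into the ratios of Lemma~\ref{ModelNet} should give two facts: first, $z(1,1)$ and $z(-1,-1)$ have the same sign, so the two opposite patches $BP(\tfrac12,\tfrac12)$ and $BP(-\tfrac12,-\tfrac12)$ lie on the same side of $z=0$; second, the atypicality forces the projecting parallelograms of these two quadrangles to overlap in a two-dimensional region $R$ having $x(0,0)$ as a vertex. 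In the typical case these two parallelograms meet only at $x(0,0)$, so this overlap is precisely what atypicality provides, in agreement with Corollary~\ref{Configs}.

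Finally I would exhibit the intersection by a continuity argument on $R$. Let $g$ denote the difference of the two patch heights over $R$; both heights share a common sign there by the first fact. Along the part of $\partial R$ contained in an edge of the $(-\tfrac12,-\tfrac12)$-parallelogram issuing from $x(0,0)$ one has $z(-1,-1)\,st=0$ while the other height is strictly positive in the interior of its parallelogram, so $g$ has one sign; along the part of $\partial R$ lying in an edge of the $(\tfrac12,\tfrac12)$-parallelogram the roles reverse and $g$ has the opposite sign. By the intermediate value theorem $g$ vanishes at an interior point of $R$, where the two heights are equal and nonzero; this common point is a genuine intersection of the two bilinear patches, off the shared vertex $q(0,0)$.

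I expect the main obstacle to be the sign bookkeeping of the second step: proving in full generality, rather than for a single representative configuration, that the two singular-edge conditions together with atypicality force both the co-orientation of the opposite patches and the genuine two-dimensional overlap of their projections. Once these two facts are in hand, the continuity argument closes the proof with no further computation.
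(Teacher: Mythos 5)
Your overall strategy --- normalize with Lemma~\ref{ModelNet}, observe that each of the four patches at the vertex has height $z(\pm 1,\pm 1)\,st$ over its projected parallelogram, then run an intermediate value argument on the height difference over the overlap of two projections --- is exactly the paper's strategy. But you apply it to the \emph{wrong pair of quadrangles}, and for your pair both facts required in your second step are false; the gap is not sign bookkeeping but the choice of pair. Keep your normalization (singular edges $q_1(-\tfrac{1}{2},0)$ and $q_2(0,-\tfrac{1}{2})$, atypical star) and test the concrete star $q(0,0)=(0,0,0)$, $q(1,0)=(0,1,0)$, $q(-1,0)=(1,0,0)$, $q(0,1)=(-1,-1,0)$, $q(0,-1)=(1,-1,0)$: the products of adjacent $\Omega$'s show that exactly $q_1(-\tfrac{1}{2},0)$ and $q_2(0,-\tfrac{1}{2})$ are singular, and the four directions occur in the cyclic order $q(1,0),q(0,1),q(0,-1),q(-1,0)$, which is atypical, so this is a swallowtail. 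In the notation of Lemma~\ref{ModelNet} this is $\alpha=\beta=1$, $(a,b)=(-1,-1)$, $(c,d)=(1,-1)$, and the ratios of that lemma give $z(-1,-1)/z(1,1)=-\alpha d/(\beta a)=-1<0$; taking $z(1,1)=1$ one gets $z(-1,1)=z(-1,-1)=z(1,-1)=-1$. Hence your ``between'' patch $BP(-\tfrac{1}{2},-\tfrac{1}{2})$ and ``opposite'' patch $BP(\tfrac{1}{2},\tfrac{1}{2})$ lie on \emph{opposite} sides of $z=0$, and their projected cones, spanned by $\{(1,0),(1,-1)\}$ and $\{(0,1),(-1,-1)\}$ respectively, meet only at the origin, so your region $R$ is empty. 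Worse, since each of these two patches meets $\{z=0\}$ only along its two boundary edges through $q(0,0)$, and those four segments pairwise meet only at the origin, one has $BP(\tfrac{1}{2},\tfrac{1}{2})\cap BP(-\tfrac{1}{2},-\tfrac{1}{2})=\{q(0,0)\}$: this pair has no self-intersection at all. The same sign analysis shows this is not special to the example: the swallowtail conditions always force the between and opposite patches onto opposite sides of the star plane.

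The pair that does intersect --- and the one the paper uses --- is the other diagonal pair, namely the two patches each containing exactly one cuspidal edge: in your labelling, $BP(-\tfrac{1}{2},\tfrac{1}{2})$ and $BP(\tfrac{1}{2},-\tfrac{1}{2})$. For these, Lemma~\ref{ModelNet} gives $z(-1,1)/z(1,-1)=-\alpha b/(\beta c)>0$ under the swallowtail hypotheses, so they lie on the same side of $z=0$; and atypicality is precisely what makes \emph{their} projections overlap in a two-dimensional sector at $x(0,0)$ (the net folds along the two cuspidal edges, so the two sheets adjacent to the folds overlap --- in the example, the cones spanned by $\{(1,0),(-1,-1)\}$ and $\{(0,1),(1,-1)\}$ overlap in the sector between $(1,-1)$ and $(1,0)$), with the projection of each cuspidal edge entering the projection of the other patch. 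Your third step then applies verbatim: along $q_1(-\tfrac{1}{2},0)$, which lies in $BP(-\tfrac{1}{2},\tfrac{1}{2})$ at height $0$, the patch $BP(\tfrac{1}{2},-\tfrac{1}{2})$ has nonzero height of fixed sign, and symmetrically with the roles reversed, so the height difference changes sign on the connected overlap and vanishes at an interior point. With this single correction your argument coincides with the paper's proof; note that the paper writes its intersecting pair as $BP(\tfrac{1}{2},\tfrac{1}{2})$ and $BP(-\tfrac{1}{2},-\tfrac{1}{2})$ only because it labels its cuspidal edges $q_1(-\tfrac{1}{2},0)$ and $q_2(0,\tfrac{1}{2})$: relative to the singular edges, its pair is again the two patches adjacent to exactly one cuspidal edge each, never the between/opposite pair. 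Also, your appeal to Corollary~\ref{Configs} cannot rescue the overlap claim, since that corollary only classifies star configurations and says nothing about which projected quadrangles overlap.
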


\noindent\emph{Proof.}
Assume $q(0,0)=(0,0,0)$ is a swallowtail of the DIIAS, with adjacent cuspidal edges $q_1(-\tfrac{1}{2},0)$ and $q_2(0,\tfrac{1}{2})$. Taking into account the notation of Lemma \ref{ModelNet}, $a<0$, $b>0$, $c>0$ and $d>0$, which imply that $z(1,1)$ and $z(-1,1)$ have the same sign. We shall assume that both are positive, the other case being analogous.

Under the above assumptions, the bilinear patches $BP(\tfrac{1}{2},\tfrac{1}{2})$ and $BP(-\tfrac{1}{2},-\tfrac{1}{2})$ are both contained in the half-space $z\geq 0$. Moreover, the segment $q_1(-\tfrac{1}{2},0)\subset BP(-\tfrac{1}{2},-\tfrac{1}{2})$ is contained in the plane $z=0$ and is below $BP(\tfrac{1}{2},\tfrac{1}{2})$. Similarly, $q_2(0,\tfrac{1}{2})\subset BP(\tfrac{1}{2},\tfrac{1}{2})$ is contained in the plane $z=0$ and is below $BP(-\tfrac{1}{2},-\tfrac{1}{2})$. We conclude that necessarily $BP(\tfrac{1}{2},\tfrac{1}{2})\cap BP(-\tfrac{1}{2},-\tfrac{1}{2})\neq \emptyset$.
\hfill$\square$

\subsection{Example}\label{Example}

Consider the curves
$$
\alpha(u)=\left(u,\,5-\frac{(u-2)^2}{8}\right),\ u\in I,\ \ \ \beta(v)=\left(v^2-2,\,v\right),\ v\in J.
$$
By considering $I,J\subset\mathbb{R}$, we obtain a smooth IIAS by the centre-chord construction, and by considering $I,J\subset\mathbb{Z}$, we obtain a DIIAS by the discrete centre-chord construction.

\begin{figure}[!htb]
  \includegraphics[width=.4\linewidth]{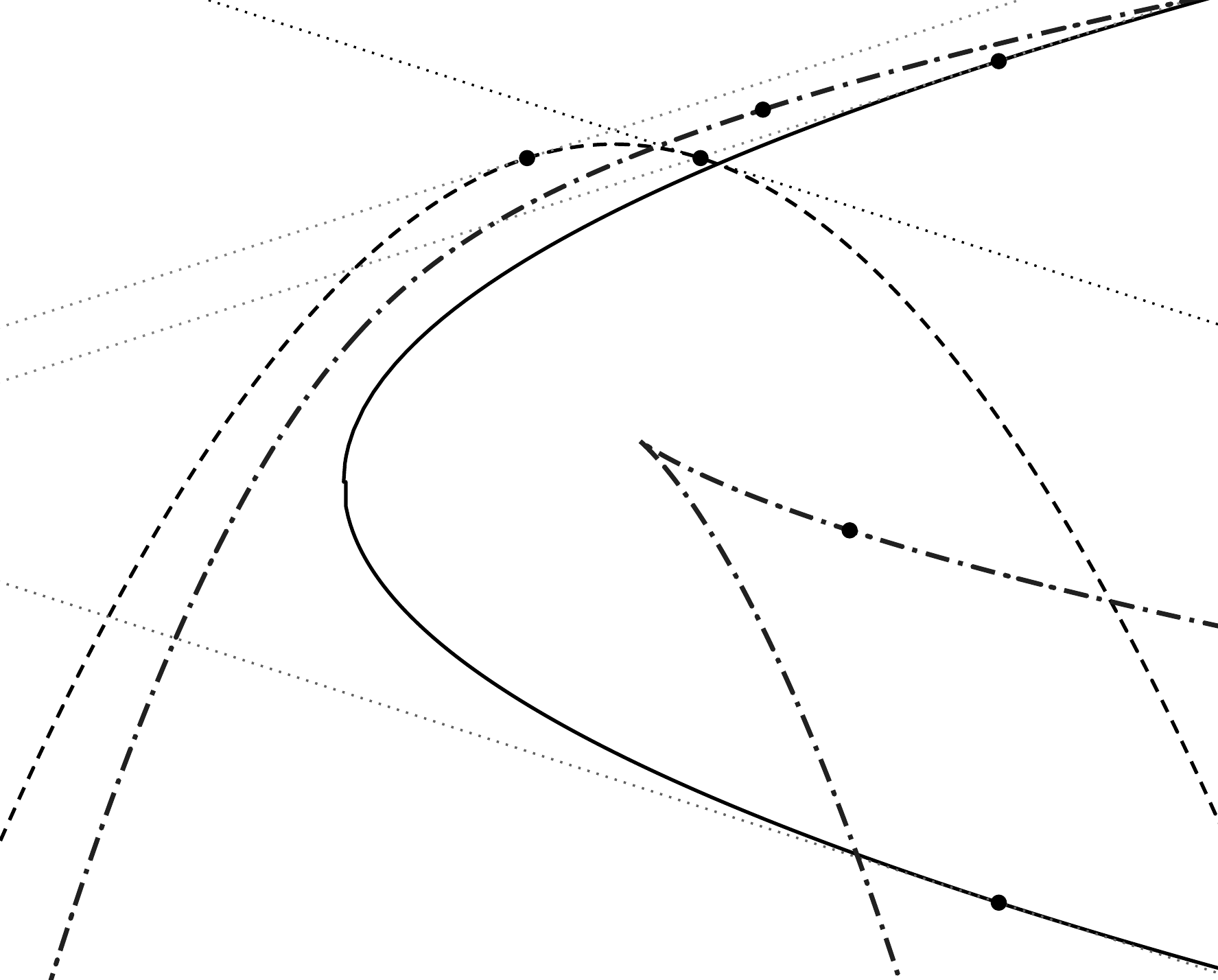}
 \hfill
 \includegraphics[width=.47\linewidth]{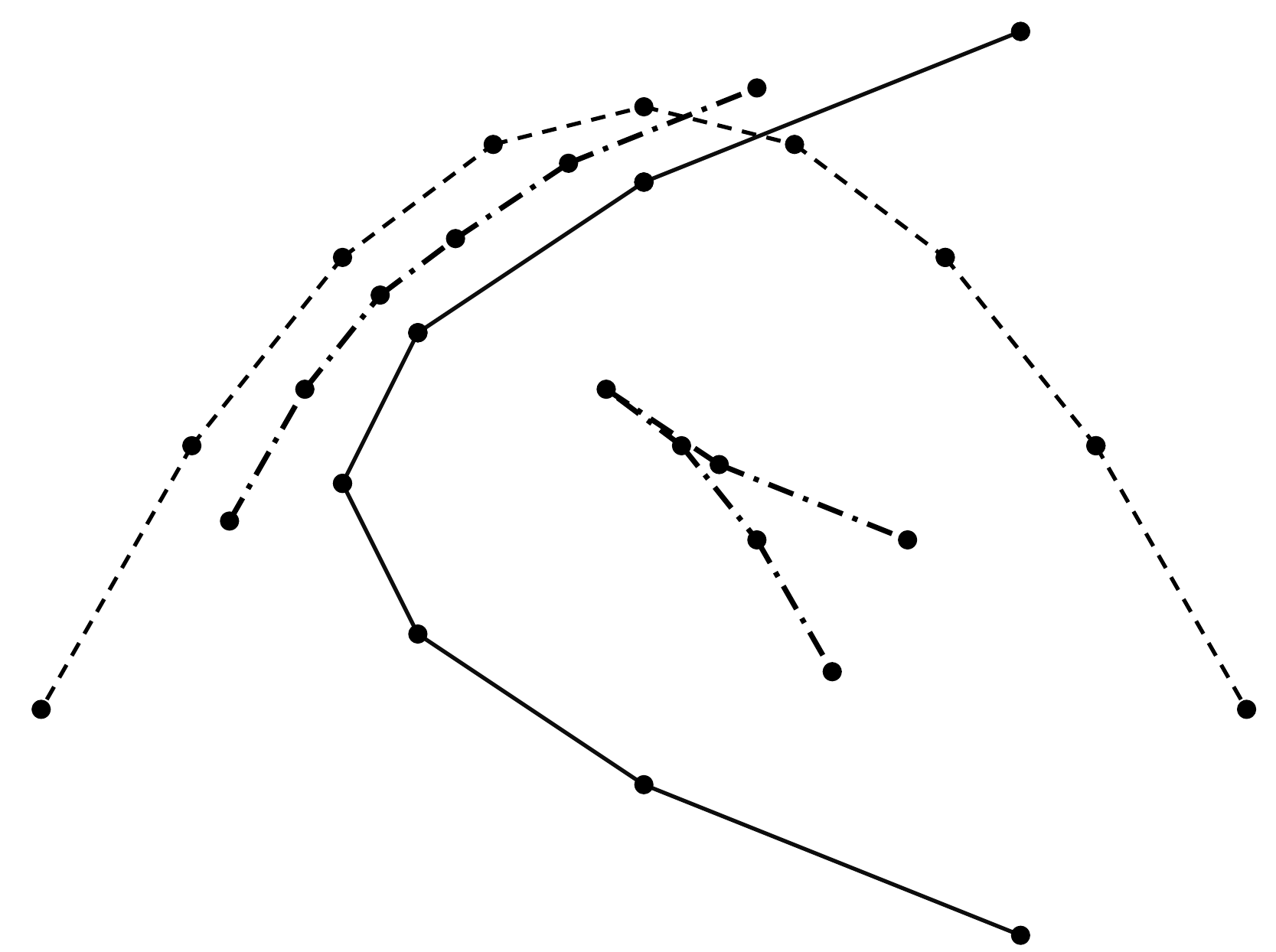}
\caption{\small MPTL (left) and DMPTL (right) associated to the pair of polygonal lines $(\alpha,\beta)$ of the example of Section \ref{Example}.}\label{Fig-Ex1-caustica-suave}
\end{figure}

We show in Figure \ref{Fig-Ex1-caustica-suave} the MPTL in the smooth case and the DMPTL in the discrete case. Note that both of them are formed by two connected components and only one of them presents a cusp, which means that the cuspidal curves of both surfaces (smooth and discrete) generated by the pair $(\alpha,\beta)$ have two connected components and a unique swallowtail vertex, as it can be seen in Figure \ref{Fig-ex-sing}.

\begin{figure}[!htb]
 \includegraphics[width=.48\linewidth]{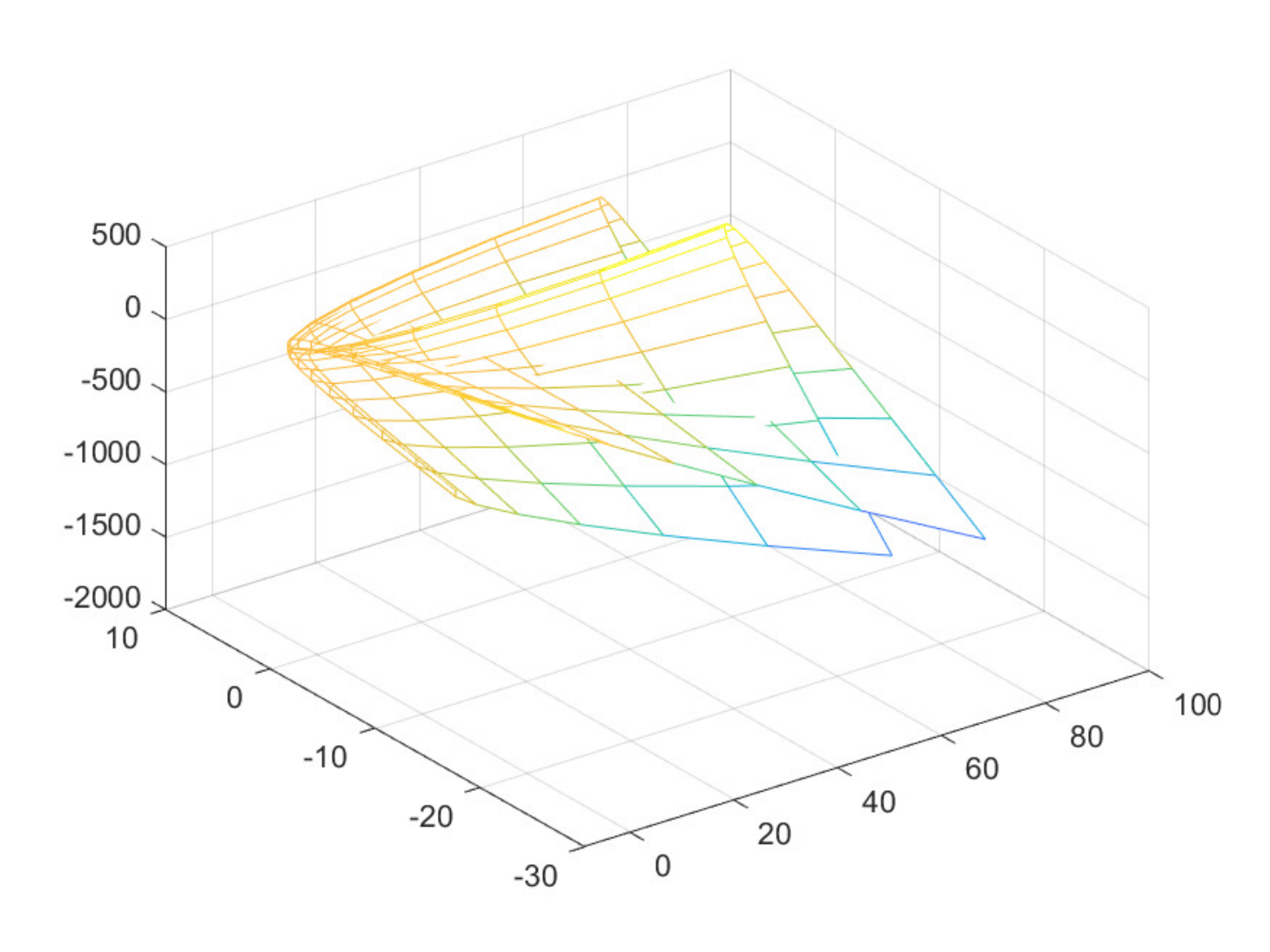}
 \hfill
 \includegraphics[width=.48\linewidth]{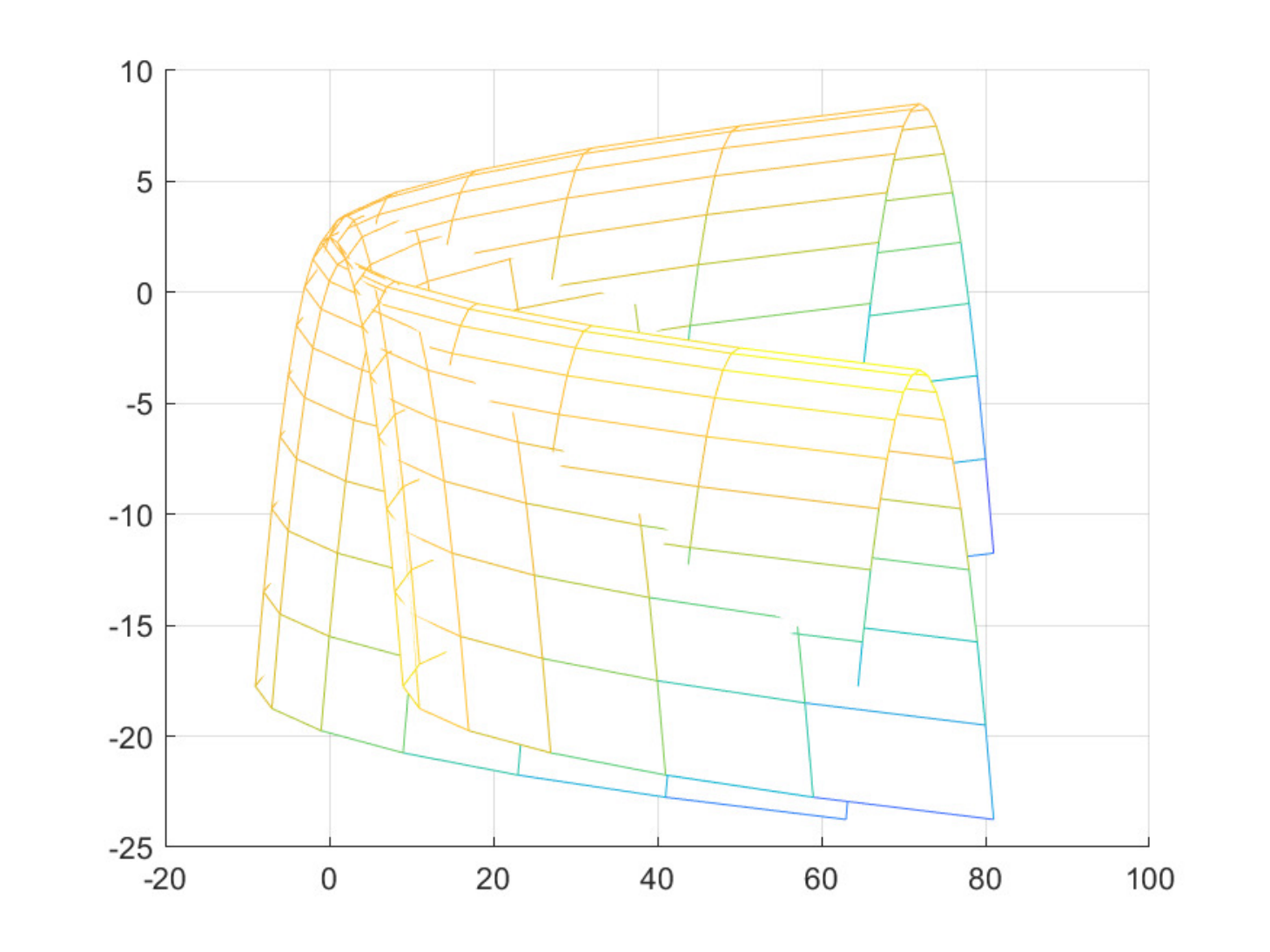}
\caption{\small Two views of the DIIAS of the example of Section \ref{Example}.}\label{Fig-ex-sing}
\end{figure}

\section{Ruled nets}

Ruled nets are defined in the same way as in smooth case, that is, in at least one of the coordinates direction, $u$-curves or $v$-curves are all straight lines. From Equations \eqref{eq-structural1}, one can easily check that a DIIAS is ruled if and only if $A=0$ or $B=0$.

\subsection{A characterization of ruled DIIAS}

Consider a ruled DIIAS. We may assume, w.l.o.g., that $B(v)=0$ and $\beta(v)=(0,v)$, $v\in J\subset\mathbb{Z}$. Next proposition is a discrete counterpart of Theorem \ref{teo-ruled-improper-affine-sphere}.

\begin{prop}
  Every ruled DIIAS without singularities is of the form $z=x^1x^2+\varphi(x^1)$, for some real function $\varphi$.
\end{prop}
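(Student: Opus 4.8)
\emph{Proof plan.} The plan is to specialize the centre-chord construction to the ruled case $B=0$, $\beta(v)=(0,v)$, compute $z$ explicitly by a discrete integration, and then rewrite the result in the planar coordinates $(x^1,x^2)$. First I would record the consequences of the hypotheses. Since $\beta(v)=(0,v)$, the edge vectors $\beta_2(v+\tfrac12)=(0,1)$ are constant, so by \eqref{eq:MetricCenterChord}
$$\Omega(u+\tfrac12,v+\tfrac12)=\tfrac14\,[\alpha_1(u+\tfrac12),(0,1)]=\tfrac14\big(\alpha^1(u+1)-\alpha^1(u)\big),$$
writing $\alpha(u)=(\alpha^1(u),\alpha^2(u))$; in particular $\Omega$ depends only on $u$. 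The standing assumption $\Omega\neq0$ together with the absence of singular edges forces $\Omega$ to keep a constant sign, so $\alpha^1(u+1)-\alpha^1(u)$ is nonzero with fixed sign for all $u$; thus $\alpha^1$ is strictly monotone and the map $u\mapsto x^1=\tfrac12\alpha^1(u)$ is injective. This injectivity is the fact that will later let a $u$-dependent quantity be regarded as a function of $x^1$ alone.

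Next I would compute $z$. From $x(u,v)=\tfrac12(\alpha(u)+(0,v))$ one reads $x^1=\tfrac12\alpha^1(u)$ and $x^2=\tfrac12(\alpha^2(u)+v)$, while $x_2(u,v+\tfrac12)=(0,\tfrac12)$ and $y(u,v)=\tfrac12(-\alpha^1(u),\,v-\alpha^2(u))$. Substituting into the defining relation $z_2(u,v+\tfrac12)=[x_2(u,v+\tfrac12),y(u,v)]$ of \eqref{eq:definez} gives
$$z_2(u,v+\tfrac12)=\tfrac14\,\alpha^1(u),$$
which is independent of $v$. Discrete integration in the $v$-direction then yields
$$z(u,v)=z(u,0)+\tfrac{v}{4}\,\alpha^1(u);$$
one checks that this expression is also compatible with the companion relation defining $z_1$, so nothing is hidden in the integration.

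Finally I would pass to planar coordinates. Using $\alpha^1(u)=2x^1$ and $v=2x^2-\alpha^2(u)$, the remainder becomes $\tfrac{v}{4}\alpha^1(u)=x^1x^2-\tfrac12\,x^1\alpha^2(u)$, whence
$$z(u,v)=x^1x^2+\Big(z(u,0)-\tfrac12\,x^1\,\alpha^2(u)\Big).$$
The bracketed term depends only on $u$, and by the injectivity of $u\mapsto x^1$ established above it is a well-defined function $\varphi(x^1)$. This gives $z=x^1x^2+\varphi(x^1)$, as claimed.

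The determinant computations and the telescoping sum are routine; the single step that genuinely uses the hypotheses is the last one, where the absence of singularities guarantees the strict monotonicity of $\alpha^1$, and hence that the $u$-dependent remainder descends to a bona fide function of $x^1$. This is the point I expect to require the most care to phrase correctly, since without monotonicity the same value of $x^1$ could be attained for several $u$ and $\varphi$ would fail to be well-defined.
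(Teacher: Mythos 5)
Your proposal is correct and follows essentially the same route as the paper's own proof: deduce from the absence of singularities that $\alpha^1_1$ has constant sign (so $u\mapsto x^1$ is invertible), compute $z_2(u,v+\tfrac12)$ explicitly, integrate discretely to get $z(u,v)=v\cdot c\,\alpha^1(u)+g(u)$, and rewrite in the coordinates $(x^1,x^2)$. The only difference is cosmetic: the paper silently drops the factors of $\tfrac12$ in the centre-chord formulas, whereas you carry them through, which is in fact more faithful to Equations \eqref{eq:definexy} and \eqref{eq:definez}.
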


\noindent \emph{Proof.}
Write $\alpha(u)=(\alpha^1(u),\alpha^2(u))$, $u\in I\subset\mathbb{Z}$. The hypothesis of no singularities implies that $\alpha^1_1(u+\tfrac{1}{2})$ does not change sign. Thus $\alpha^1(u)$ is an invertible map.

We have
$$
x(u,v)=(\alpha^1(u),\alpha^2(u)+v),\ y(u,v)=(-\alpha^1(u),v-\alpha^2(u)),
$$
$$
z_1(u+\tfrac{1}{2},v)=v\alpha^1_1(u+\tfrac{1}{2})-[\alpha_1(u+\tfrac{1}{2}),\alpha(u)],\ z_2(u,v+\tfrac{1}{2})=\alpha^1(u).
$$

\smallskip\noindent
By discrete integration on $u$ we have $z(u,v)=v\alpha^1(u)+g(u)$, where $g_1(u+\tfrac{1}{2})=-[\alpha_1(u+\tfrac{1}{2}),\alpha(u)]$ and so
$z=x^1x^2+\varphi(u)$,
where $\varphi(u)=g(u)-\alpha^1(u)\alpha^2(u)$. Since $x^1(u)=\alpha^1(u)$ is invertible, the proposition is proved. \hfill$\square$

\subsection{Discrete Cayley surface}

Let us now take a look at an example of a ruled DIIAS, known as discrete Cayley surface. Its discrete structure equations are
$$\begin{array}{l}
    q_{11}(u,v)=aq_2(u,v+\tfrac{1}{2})=a(0,1,u), \\
    q_{22}(u,v)=(0,0,0),\\
    q_{12}(u+\tfrac{1}{2},v+\tfrac{1}{2})=(0,0,1).
  \end{array}$$
We shall assume as initial conditions $q(0,0)=(0,0,0)$, $q(0,1)=(0,1,0)$ and $q(1,0)=(1,0,0)$. So the solution shall be
\begin{equation}\label{Cayley-surface-discrete}
  q(u,v)=\left(u, v+\frac{au(u-1)}{2},uv+\dfrac{au(u^2-1)}{6}\right), \;(u,v)\in\Z^2.
\end{equation}

Note that for this example, $\Omega=1$ and $q_{22}=(0,0,0)$. As in the smooth case, we shall call {\it normalized} a DIIAS satisfying these conditions. For a normalized DIIAS, the structural equations become
\begin{equation}
q_{11}(u,v)=A(u)q_2(u,v+\tfrac{1}{2}),\ \ q_{12}(u+\tfrac{1}{2},v+\tfrac{1}{2})=(0,0,1)\ \ q_{22}(u,v)=(0,0,0).
\end{equation}

Thus we have proved the following theorem:

\begin{thm}\label{Cayley-surface-characterization-discrete}
Let $q$ be a normalized DIIAS. Then $q$ is affinely congruent to a discrete Cayley surface if and only if $A\neq 0$ and $A_1=0$.
\end{thm}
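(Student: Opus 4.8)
The plan is to mirror the smooth characterization in Theorem~\ref{Cayley-surface-characterization}, using the fact that a normalized DIIAS already carries the structural equations $q_{11}(u,v)=A(u)\,q_2(u,v+\tfrac12)$, $q_{12}(u+\tfrac12,v+\tfrac12)=(0,0,1)$ and $q_{22}(u,v)=(0,0,0)$ recorded just above the statement. One direction is essentially tautological, and the other carries the content.

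First I would dispatch the ``only if'' direction. If $q$ is affinely congruent to a discrete Cayley surface, then the explicit net \eqref{Cayley-surface-discrete} has cubic form coefficient $A=a$, a nonzero constant. Since $A(u,v)=[q_1(u-\tfrac12,v),q_1(u+\tfrac12,v),\xi]$ is assembled from a determinant and the affine normal, an equi-affine change $\tilde q=Lq+b$ with $\det L=1$ leaves it unchanged: one has $\tilde q_1=Lq_1$ and $\tilde\xi=L\xi$, so $\tilde A=(\det L)\,A=A$. Hence $q$ inherits $A\neq0$ and $A_1=0$.

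For the ``if'' direction I would argue as follows. The hypothesis $A_1(u+\tfrac12)=A(u+1)-A(u)=0$ forces $A$ to be constant, and it is nonzero by assumption, so $A(u)\equiv a\neq0$. The normalized structural equations then collapse to $q_{11}=a\,q_2$, $q_{12}=(0,0,1)$, $q_{22}=0$, i.e. exactly the discrete Cayley system. Because $q_1(\tfrac12,0)$, $q_2(0,\tfrac12)$ and $\xi=(0,0,1)$ form a basis with $[q_1,q_2,\xi]=\Omega^2=1$, there is a unique linear map of determinant $1$ sending them to $(1,0,0)$, $(0,1,0)$, $(0,0,1)$; composing with a translation I may assume $q(0,0)=(0,0,0)$, $q_1(\tfrac12,0)=(1,0,0)$, $q_2(0,\tfrac12)=(0,1,0)$, and this normalization is equi-affine, so the DIIAS structure is preserved. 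Now $q_{22}=0$ makes $q_2$ independent of $v$, and $q_{12}=(0,0,1)$ gives $q_2(u+1,\cdot)-q_2(u,\cdot)=(0,0,1)$, hence $q_2(u,v+\tfrac12)=(0,1,u)$. Feeding this into $q_{11}=a(0,1,u)$ and propagating $q_1$ in $v$ by $q_{12}=(0,0,1)$, I would solve the central-difference recursion to obtain $q_1(u+\tfrac12,v)=(1,\,au,\,v+\tfrac{au(u+1)}{2})$; a discrete integration in $u$ along $v=0$ followed by integration in $v$ then recovers \eqref{Cayley-surface-discrete} exactly.

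The main obstacle is purely computational: the third coordinate requires two nested discrete summations with half-integer arguments, where one needs $\sum_{j=0}^{u-1}(j^2+j)=\tfrac{(u-1)u(u+1)}{3}$ to turn the quadratic data into the cubic entry $\tfrac{au(u^2-1)}{6}$. This step is sign- and index-sensitive, but once the sums are evaluated the resulting net coincides term by term with the discrete Cayley surface, closing the argument in parallel with the smooth proof.
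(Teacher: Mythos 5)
Your proof is correct and takes essentially the same route as the paper: the paper's argument consists of deriving the normalized structural equations $q_{11}=A(u)\,q_2$, $q_{12}=(0,0,1)$, $q_{22}=(0,0,0)$, noting that $A\neq 0$ with $A_1=0$ makes these exactly the discrete Cayley system, and invoking the solution \eqref{Cayley-surface-discrete} already computed from the normalized initial data. Your explicit equi-affine normalization of the initial frame and the discrete integrations (including $\sum_{j=0}^{u-1}j(j+1)=\tfrac{(u-1)u(u+1)}{3}$) just fill in details the paper leaves implicit, and both directions check out.
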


This result should be considered as a discrete analog of Theorem \ref{Cayley-surface-characterization}.

\subsection{Singularities of ruled DIIAS}

Any ruled DIIAS can be otained by the centre-chord construction from a planar polygonal line $\alpha(u)$ and a planar straight line $\beta(v)$. We can observe that along any $v$-line with $u$ fixed, the corresponding bilinear patches are just extensions of each other (\cite{Vargas2023}).

The singular points of the DIIAS are the pairs $(u_0,v)$ so that $\tfrac{d\alpha}{du}(u_0)$ is parallel to $\beta$ and $v\in\mathbb{Z}$. Thus the DMPTL is generically a discrete set of lines parallel to $\beta$ and the singularities of the DIIAS are a discrete set of spatial lines, whose points are all cuspidal edges of the DIIAS.

\begin{example}\label{ex:Ruled}
Consider $D=\{-1,0,1\}\times\{-1,0,1\}$, i.e., $u\in\{-1,0,1\}$ and $v\in\{-1,0,1\}$. Define the polygonal $\alpha$ by
$\alpha(-1)=(-1,3)$, $\alpha(0)=(0,2)$ and $\alpha(1)=(1,5)$, and the polygonal line $\beta$ by $\beta(-1)=(-1,0)$,
$\beta(0)=(0,0)$ and $\beta(1)=(1,0)$. Observe that the $\beta$ segments are colinear, and so the DIiAS obtained by the center-chord construction is ruled. In Figure \ref{Fig:Ruled1} (center), we can see the polygonal lines $\alpha$ and $\beta$ together with the corresponding $x$-net. The edges of the DMPTL are $\left(u=0,v=\tfrac{1}{2}\right)$ and $\left(u=0,v=-\tfrac{1}{2}\right)$.

\begin{figure}[!htb]
\includegraphics[width=.50\linewidth]{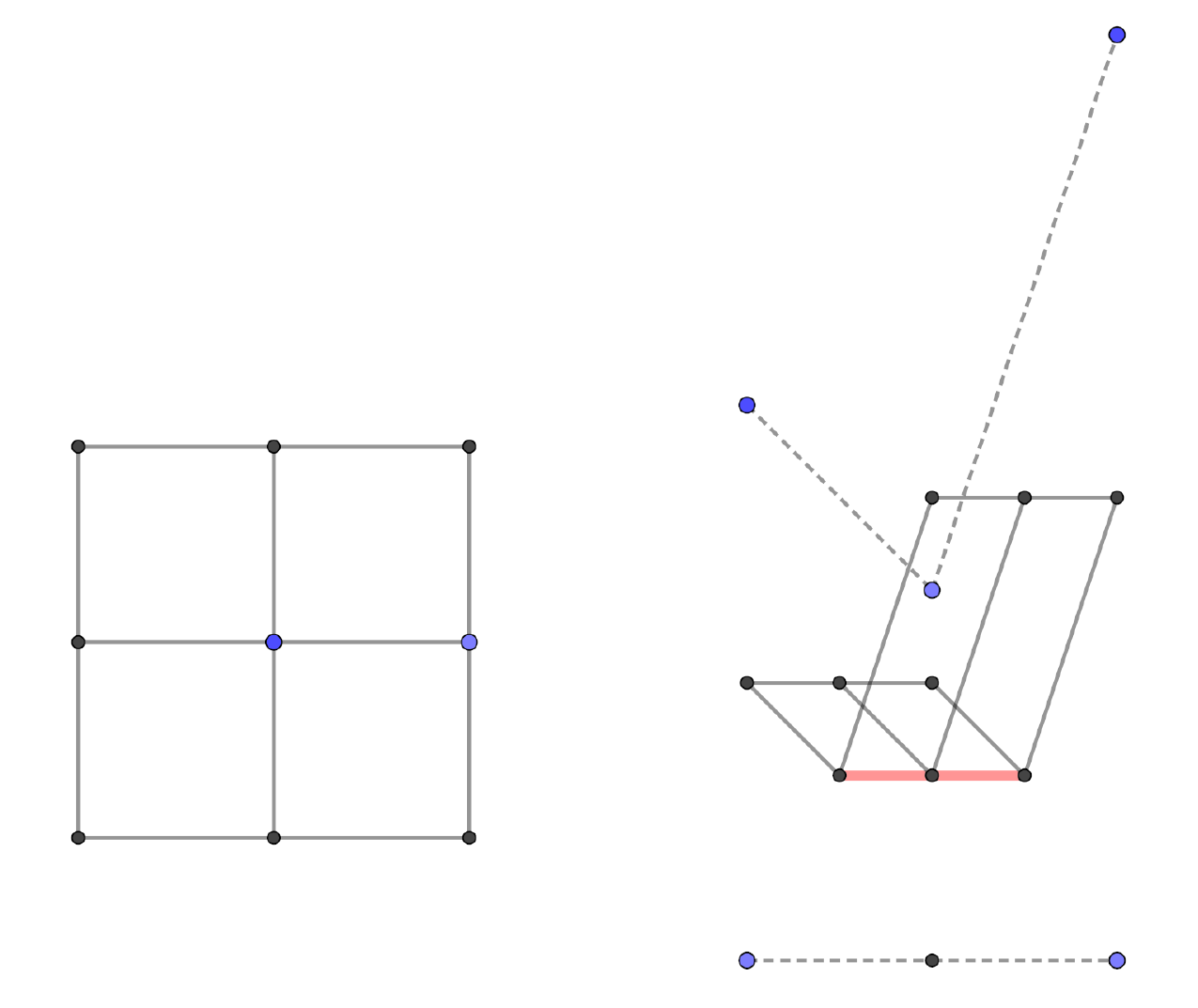}
\hfill
\includegraphics[width=.45\linewidth]{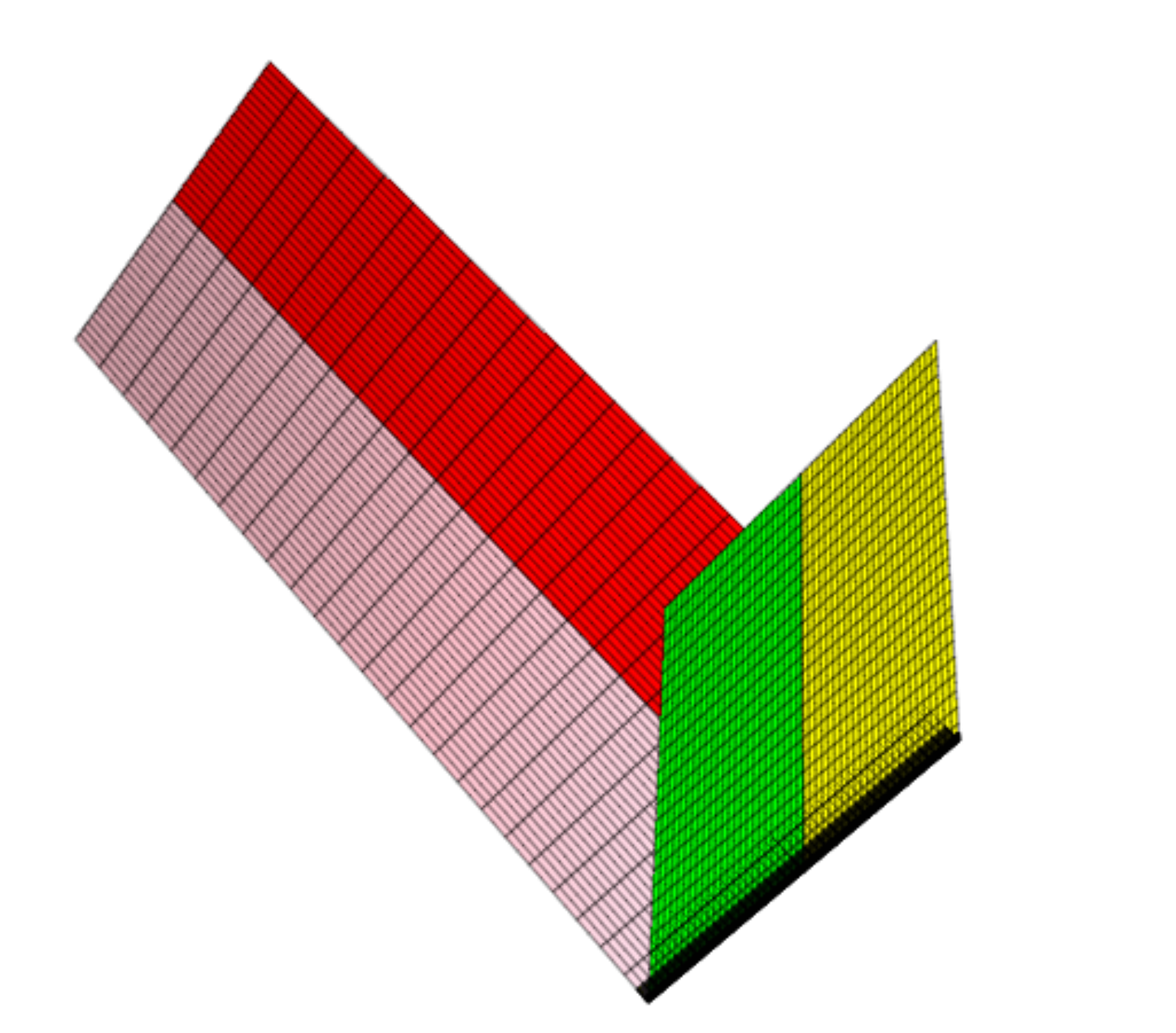}
\caption{\small The domain $D$ (left), the polygonal lines $\alpha$ and $\beta$ (traced center), the corresponding $x$-net (full segments center) and the four bilinear patches of the DIIAS (right) of Example \ref{ex:Ruled}.  The edges of the DMPTL of the pair $(\alpha,\beta)$ appear in thick red (center), while the cuspidal edges of the DIIAS appear in thick black (right).}
\label{Fig:Ruled1}
\end{figure}

To each quadrangle corresponds a bilinear patch. For the quadrangle $(-\tfrac{1}{2},-\tfrac{1}{2})$, the bilinear patch is
$$
BP(-\tfrac{1}{2},-\tfrac{1}{2})(u,v)=\tfrac{1}{2}\left( u+v, 2-u, -u-v+\tfrac{1}{2}uv  \right),\ \ -1\leq u\leq 0, \ \ -1\leq v\leq 0,
$$
while for the quadrangle $(-\tfrac{1}{2},\tfrac{1}{2})$, the bilinear patch is
$$
BP(-\tfrac{1}{2},\tfrac{1}{2})(u,v)=\tfrac{1}{2}\left( u+v, 2-u, -u-v+\tfrac{1}{2}uv  \right),\ \ -1\leq u\leq 0, \ \ 0\leq v\leq 1.
$$
Note that these two bilinear patches are extensions of each other, as expected. For the quadrangle $(\tfrac{1}{2},-\tfrac{1}{2})$, the bilinear patch is
$$
BP(\tfrac{1}{2},-\tfrac{1}{2})(u,v)=\tfrac{1}{2}\left( u+v, 2+3u, -u-v-\tfrac{3}{2}uv  \right),\ \ -1\leq u\leq 0, \ \ -1\leq v\leq 0,
$$
while for the quadrangle $(-\tfrac{1}{2},\tfrac{1}{2})$, the bilinear patch is
$$
BP(\tfrac{1}{2},\tfrac{1}{2})(u,v)=\tfrac{1}{2}\left( u+v, 2+3u, -u-v-\tfrac{3}{2}uv  \right),\ \ -1\leq u\leq 0, \ \ 0\leq v\leq 1.
$$
Note once again that these two bilinear patches are extensions of each other, as expected.
The edges above the DMPTL are cuspidal edges of the $q$-net, and in a ruled net they are colinear. In Figure \ref{Fig:Ruled1} (right), we can see the cuspidal edges in thick black.

\end{example}

\section*{\textbf{Acknowledgement}}

The authors are thankful to CAPES and CNPq for financial support during the preparation of this paper. Both authors thank Pontifical Catholic University of Rio de Janeiro and the first author also thanks Col\'egio Pedro II.

\subsection*{Funding:} Both authors had the support of CNPq (Conselho Nacional de Desenvolvimento Cient\'ifico e Teconol\'ogico - Brazil) and CAPES (Coordena\c c\~ao de Aperefei\c coamento de Pessoal de N\'ivel Superior - Brazil).

\subsection*{Author Contribution:} Both authors wrote the article, prepared the figures and reviewed the article.

\subsection*{Conflict of Interest:} The authors have no conflict of interest.

\subsection*{Data Availability Statement:} This article is theoretical and does not use experimental data.

\bibliographystyle{amsplain}

\begin{thebibliography}{99}
\bibitem{Bobenko1999} BOBENKO, A., SCHIEF, W.: \textbf{Affine spheres: Discretization via duality relations}. Experimental Mathematics \textbf{8}(3), 261-280 (1999).
\bibitem{Bobenko2008} BOBENKO, A., SURIS, Y.: \textbf{Discrete Differential Geometry: Integrable Structure}. Graduate Studies in Mathematics, Vol. \textbf{98}, AMS (2008).
\bibitem{Buchin1983} BUCHIN, S.: \textbf{Affine Differential Geometry}. Science Press, Beijing, China (1983).
\bibitem{Craizer2008} CRAIZER, M., SILVA, M., TEIXEIRA, R.: \textbf{Area distance of convex plane curves and improper affine spheres}. SIAM Journal on Imaging Sciences \textbf{1}(3), 209–227 (2008).
\bibitem{Craizer2010} CRAIZER, M., ANCIAUX, H., LEWINER, T.: \textbf{Discrete Affine Minimal Surfaces with Indefinite Metric}. Differential Geometry and its Applications \textbf{28}, 158-169 (2010).
\bibitem{Craizer2011} CRAIZER, M., SILVA, M., TEIXEIRA, R.: \textbf{A Geometric Representation of Improper Indefinite Affine Sphere with Singularities}. Journal of Geometry \textbf{100}, 65-78 (2011).
\bibitem{Giblin2008} GIBLIN, P.J.: \textbf{Affinely Invariant Symmetry Sets}. In: \textbf{Geometry and Topology of Caustics}. Vol. 82, 71-84. Banach Center Publications (2008).
\bibitem{Hoffmann2012} HOFFMANN, T., ROSSMAN, W., SASAKI, T., and YOSHIDA, M.: \textbf{Discrete flat surfaces and linear Weingarten surfaces in hyperbolic 3-space}. Trans. Amer. Math. Soc., \textbf{364}(11), 5605-5644, (2012).
\bibitem{Rorig2014} HUHNEN-VENEDEY, E., RÖRIG, T.: \textbf{Discretization of Asymptotic Line Parametrizations using Hyperboloid Surface Patches}. Geometriae Dedicata \textbf{168}, 265-289 (2014).
\bibitem{Ishikawa2006} ISHIKAWA, G. MACHIDA, Y.: \textbf{Singularities of Improper Affine Spheres and Surfaces of Constant Gaussian Curvature}. International Journal of Mathematics \textbf{17}(3), 269-293 (2006).
\bibitem{Kaferbock2013} KÄFERBÖCK, F., POTTMANN, H.: \textbf{Smooth Surfaces from Bilinear Patches: Discrete Affine Minimal Surfaces}. Computer Aided Geometric Design \textbf{30}, 476-489 (2013).
\bibitem{Kobayashi2020} KOBAYASHI, S., MATSUURA, N.: \textbf{Representation Formula for Discrete Indefinite Affine Spheres}. Differential Geometry and its Applications \textbf{69}, 1-41 (2020).
\bibitem{Matsuura2003} MATSUURA, N., URAKAWA, H.: \textbf{Discrete improper affine spheres}. Journal of Geometry and Physics \textbf{45}, 164-183 (2003).
\bibitem{Milan2014} MILÁN, F.: \textbf{The Cauchy Problem for Indefinite Improper Affine Spheres and their Hessian Equation}. Advances in Mathematics \textbf{251}, 22-34 (2014).
\bibitem{Nomizu1994} NOMIZU, K., SASAKI, T.: \textbf{Affine Differential Geometry}. Cambridge University Press (1994).
\bibitem{Rossman2018} ROSSMAN, W., YASUMOTO, M.: \textbf{Discrete linear Weingarten surfaces with singularities in Riemannian and Lorentzian spaceforms}. Advanced Studies in Pure Mathematics, \textbf{78}, 383-410 (2018).
\bibitem{Vargas2023} VARGAS, A.R., CRAIZER M.: \textbf{Discrete asymptotic nets with constant affine mean curvature}, Beitr. Algebra Geom., https://doi.org/10.1007/s13366-023-00707-w, 2023.
\bibitem{Yasumoto2015} YASUMOTO, M.: \textbf{Discrete maximal surfaces with singularities in Minkowski space}. Differential Geometry and its Applications, \textbf{43}, 130-154 (2015).
\end{thebibliography}

\end{document}